\def\R{\mathbb R}
\def\C{\mathbb C}
\def\Z{\mathbb Z}
\def\Q{\mathbb Q}
\newtheorem{crl}{Corollary}[section]
\newtheorem{lmm}{Lemma}[section]
\newtheorem{prp}{Proposition}[section]
\newtheorem{thm}{Theorem}[section]
\newtheorem{sublmm}{Sublemma}[section]
\theoremstyle{definition}
\newtheorem{rem}{Remark}
\newtheorem{assump}{Assumption}
\newtheorem{dfn}{Definition}[section]
\newtheorem{conj}{Conjecture}[section]
\newtheorem{prob}{Problem}[section]
\theoremstyle{remark}
\title{Counting pseudo-holomorphic discs \\
in 
Calabi-Yau 3 fold
}
\author{Kenji Fukaya}
\begin{document}

\maketitle

\begin{abstract}   
In this paper we define an invariant of a pair of 6 dimensional symplectic                    
manifold with vanishing 1st Chern class and its Lagrangian submanifold 
with 
vanishing Maslov index.
This invariant is a function on the set of the path connected components of 
the bounding cochains (solution of 
A infinity version of Maurer-Cartan equation of the filtered A infinity algebra 
associated to the Lagrangian submanifold). 
In the case when the Lagrangian submanifold is a rational homology sphere, 
it becomes a numerical invariant.
\par
This invariant depends on the choice of almost complex structure.
The way how it depends on the almost complex structure
is described by a wall crossing formula which involves 
moduli space of pseudo-holomorphic spheres.
\end{abstract}

\tableofcontents                        

\section{Introduction}

\label{introsec}

\footnote{Supported by JSPS Grant-in-Aid for Scientifique Research No. 19104001 and 
Global COE Program G08.}This paper is a continuation of \cite{FOOO080} Subsection 3.6.4 and 
\cite{F1}. 
\par
Let $(M,\omega)$ be a symplectic manifold of (real) dimension $2\times 3$. 
We assume that $c^1(M) = 0$ in $H^2(M;\Q)$.
(Here we use compatible almost complex structure of tangent bundle to define $c^1(M)$.)
Let $L \subset M$ be a relatively spin Lagrangian submanifold and
$\mu_L : H_2(M,L;\Z) \to 2\Z$ its Maslov index homomorphism. (See \cite{FOOO080} Subsection 2.1.1.)
We assume that $\mu_L$ is $0$. 
In this paper we consider such a pair $(M,L)$.
A typical example is a pair of a
Calabi-Yau 3 fold $M$, and its 
special Lagrangian submanifold  $L$.
This is one of the most interesting cases of (homological) mirror symmetry.
Our main purpose of this paper is to define and study an invariant of 
such $(M,L)$. It is independent of various choices involved in the 
construction but depends on the almost complex structure $J$ of $M$.
\par
We consider $\mathcal M(L;J;\Lambda_+)$ the set of `$\Lambda_+$-valued points of 
Maurer-Cartan formal scheme'  of the filtered $A_{\infty}$ structure associated to $L$.
This is the set of gauge equivalence classes of bounding cochains 
and defined in \cite{FOOO080} Section 4.3. (Here we include $J$ in the notation 
since $J$ dependence is rather crucial in this paper.)
We study cyclic filtered $A_{\infty}$ algebra $(\Lambda(L),\langle \cdot \rangle,
\{\frak m_{k,\beta}^J\})$ produced in \cite{F1} by modifying the construction of \cite{FOOO080}.
In our case where $\mu_L$ is $0$ we can reduce the coefficient ring to 
$\Lambda_0 = \Lambda_{0,nov}^{(0)}$, that is the degree $0$ part of the universal 
Novikov ring with $\R$ coefficient.
(The universal Novikov ring is defined at \cite{FOOO080} begining of Subsection 1.2.)
We denote by $\Lambda_+$ its maximal ideal.
Let $[b] \in \mathcal M(L;J;\Lambda_+)$. We define a superpotential (without leading term) by:
\begin{equation}\label{superpote}
\Psi'(b;J) = \sum_{k=0}^{\infty}\sum_{\beta} 
\frac{T^{\beta\cap \omega}}{k+1}\langle \frak m_{k,\beta}^J(b,\ldots,b),b \rangle.
\end{equation}
To obtain a superpotential which is independent of the 
perturbation and other choices involved, we need to add the constant term to it.
Note $ \frak m^J_{k,\beta}$ is defined by using moduli space $\mathcal M_{k+1}(\beta;J)$ of 
pseudo-holomorphic discs with $k+1$ marked points and of homology class 
$\beta \in H_2(M,L;\Z)$. We use $\mathcal M_{0}(\beta;J)$, the moduli space of $J$ holomorphic discs,
of homology class $\beta$ without marked point, to define 
\begin{equation}
\frak m^{J}_{-1,\beta}\,\, \text{``=''} \,\, \# \mathcal M_{0}(\beta;J).
\end{equation}
(See Sections \ref{pseudoiso},\ref{geomsec} for precise definition.) and put
\begin{equation}\label{correctedsup}
\Psi(b;J) = \Psi'(b;J) + \sum_{\beta\in H_2(L;\Z)}T^{\beta\cap\omega}\frak m^{J}_{-1,\beta}.
\end{equation}
More precisely we assume that our almost complex structure $J$ satisfies the following:
\begin{assump}\label{assumpJ}
There exists no nontrival $J$-holomorphic sphere $v : S^2 \to M$ such that 
$v(S^2) \cap L \ne \emptyset$.
\end{assump}
By dimension counting we find that the set of such $J$ is dense.
\begin{thm}\label{superpotential}
\begin{enumerate}
\item If $J$ satisfies the Assumption \ref{assumpJ}, then there exists 
a function 
$$
\Psi : H^1(L;\Lambda_{+}) \to \Lambda_{+}
$$
which depends not only on $J$ but also on perturbation etc.
\item
There exists an isomorphism between the set
$\mathcal M(L;J;\Lambda_+)$ and the set of critical points of $\Psi$.
\item
The restriction of $\Psi$ to its critical point set $\mathcal M(L;J;\Lambda_+)$ 
depends only on $M,L,J$ and is independent of the choice of perturbation etc.
\end{enumerate}
\end{thm}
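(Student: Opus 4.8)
The plan is to pass to a cyclic canonical (minimal) model of the filtered $A_\infty$ algebra $(\Lambda(L),\langle\cdot\rangle,\{\frak m_{k,\beta}^J\})$ produced in \cite{F1}, and then to exploit the interplay between its cyclic symmetry and the Maurer--Cartan equation. The only input not already contained in \cite{FOOO080} and \cite{F1} is the correction term $\frak m_{-1,\beta}^J=\#\mathcal M_0(\beta;J)$ and the role of Assumption \ref{assumpJ}; the precise moduli‑theoretic construction of $\mathcal M_0(\beta;J)$ is deferred to Sections \ref{pseudoiso} and \ref{geomsec}.

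For \rom{(1)}: by \cite{F1} one may replace $\Lambda(L)$ by a cyclic filtered $A_\infty$ algebra with underlying module $H^*(L;\Lambda_0)$ whose operations $\frak m_{k,\beta}^{J,\mathrm{can}}$ are still cyclic with respect to the Poincar\'e pairing $\langle\cdot\rangle$ of the closed oriented $3$-manifold $L$. Because $c^1(M)=0$ and $\mu_L=0$ the grading is an honest $\Z$-grading, so a degree $1$ element is exactly an element of $H^1(L;\Lambda_+)$; for such $b$ one gets $\frak m_{k,\beta}^{J,\mathrm{can}}(b,\ldots,b)\in H^2(L;\Lambda_+)$, hence $\langle\frak m_{k,\beta}^{J,\mathrm{can}}(b,\ldots,b),b\rangle\in\Lambda_+$. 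Reading (\ref{superpote}) off the canonical model gives a $T$-adically convergent series: the $(k,\beta)$-term has valuation at least $\beta\cap\omega+k\cdot v_T(b)$, and Gromov compactness both bounds below the areas $\beta\cap\omega$ of classes with $\mathcal M_{k+1}(\beta;J)\ne\emptyset$ and bounds the number of such classes below any fixed area, so only finitely many terms survive modulo each power of $T$. For the added constant one uses $\dim\mathcal M_0(\beta;J)=\dim L+\mu_L(\beta)-3=0$, so $\frak m_{-1,\beta}^J=\#\mathcal M_0(\beta;J)$ is a well-defined rational number; Assumption \ref{assumpJ} is precisely what keeps sphere bubbles touching $L$ out of the compactification, so that this virtual count and the Kuranishi data defining it are unambiguous, and $\sum_\beta T^{\beta\cap\omega}\frak m_{-1,\beta}^J$ again converges by Gromov compactness. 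This yields $\Psi(\cdot;J)\colon H^1(L;\Lambda_+)\to\Lambda_+$, with the dependence on perturbation data entering through the choice of canonical model and through the virtual counts.

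For \rom{(2)}: the key identity is that for $b,v\in H^1(L;\Lambda_+)$,
\[
d\Psi(b)(v)=\big\langle\frak P^{J,\mathrm{can}}(b),v\big\rangle,\qquad \frak P^{J,\mathrm{can}}(b):=\sum_{k,\beta}T^{\beta\cap\omega}\,\frak m_{k,\beta}^{J,\mathrm{can}}(b,\ldots,b).
\]
Indeed, differentiating $\tfrac{1}{k+1}\langle\frak m_{k,\beta}(b^{\otimes k}),b\rangle$ gives $k+1$ summands, one from each slot, and the cyclic symmetry of $\frak m_{k,\beta}^{J,\mathrm{can}}$ — together with the fact that in the shifted (bar) convention $b$ and $v$ have even (namely $0$) degree, so no Koszul signs appear — identifies all $k+1$ of them and cancels the prefactor, while the constant term of (\ref{correctedsup}) contributes nothing. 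Since $\langle\cdot\rangle$ restricts to the perfect Poincar\'e pairing $H^2(L;\Lambda_0)\otimes H^1(L;\Lambda_0)\to\Lambda_0$, we get $d\Psi(b)=0$ iff $\frak P^{J,\mathrm{can}}(b)=0$, i.e.\ iff $b$ is a bounding cochain of the canonical model. Finally, by homotopy invariance of the Maurer--Cartan functor (\cite{FOOO080} Section 4.3) the gauge equivalence classes of the canonical model are in bijection with those of the original model, i.e.\ with $\mathcal M(L;J;\Lambda_+)$; and in a strictly unital canonical model the infinitesimal gauge action on a degree $1$ element $b$ of $c\,\mathbf e\in H^0(L;\Lambda_+)$ reduces to $\frak m_2(b,c\,\mathbf e)+\frak m_2(c\,\mathbf e,b)=c\,b-c\,b=0$, so gauge equivalence is trivial on $H^1(L;\Lambda_+)$. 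Hence $\mathcal M(L;J;\Lambda_+)\cong\{\,b:\frak P^{J,\mathrm{can}}(b)=0\,\}=\mathrm{Crit}(\Psi)$.

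For \rom{(3)}: two choices of perturbation/Kuranishi data produce two cyclic filtered $A_\infty$ algebras, together with their $\frak m_{-1}$-data, and two functions $\Psi^0=\Psi$, $\Psi^1=\Psi'$; these are joined by a cyclic filtered $A_\infty$ pseudo-isotopy (a cyclic filtered $A_\infty$ structure over $[0,1]$) coming from a cobordism between the two systems of Kuranishi structures on $\{\mathcal M_{k+1}(\beta;J)\}_{k\ge0}$ and on $\{\mathcal M_0(\beta;J)\}$, which carries an interpolating superpotential $\Psi^t$. One shows that the induced bijection $\frak f_*$ of Maurer--Cartan sets satisfies $\Psi^1(\frak f_*(b))=\Psi^0(b)$ for every bounding cochain $b$: differentiating $\Psi^t$ in $t$ and using the pseudo-isotopy $A_\infty$ relations together with cyclic symmetry rewrites $\tfrac{d}{dt}\Psi^t$, restricted to the Maurer--Cartan locus, as a boundary term equal to the variation along the cobordism of $\sum_\beta T^{\beta\cap\omega}\#\mathcal M_0(\beta;J)$ — which is exactly why the correction term had to be added, and which makes $\tfrac{d}{dt}\Psi^t$ vanish there. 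Through the identification of \rom{(2)}, $\Psi|_{\mathcal M(L;J;\Lambda_+)}$ therefore depends only on $M,L,J$. I expect the main obstacle to be this last part: building a cyclic pseudo-isotopy of Kuranishi structures simultaneously and compatibly for the whole system $\{\mathcal M_{k+1}(\beta;J)\}_{k\ge0}$ \emph{and} $\{\mathcal M_0(\beta;J)\}$, with matching orientations, and carrying out the divergence/cobordism computation so that the boundary terms precisely account for the change of the $\frak m_{-1,\beta}^J$; Assumption \ref{assumpJ} is used once more here to rule out sphere bubbling along the cobordism. This is the content worked out in Sections \ref{pseudoiso} and \ref{geomsec}.
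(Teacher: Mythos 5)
Your proposal runs on the same engine as the paper: cyclic symmetry makes the critical points of $\Psi'$ coincide with the Maurer--Cartan solutions (Proposition \ref{prp21}), the correction term $\sum_\beta T^{E(\beta)}\frak m_{-1,\beta}$ is exactly what makes the restriction of $\Psi$ to the Maurer--Cartan locus invariant under pseudo-isotopy via the relation (\ref{dervativemt}) (Theorem \ref{pseudoisowelldef}), and the geometric input is a compatible system of Kuranishi structures and continuous families of multisections on $\{\mathcal M_{k+1}(\beta;J)\}\cup\{\mathcal M_{0}(\beta;J)\}$ plus Stokes' formula on $[0,1]\times\mathcal M_{0}(\beta;J)$, with Assumption \ref{assumpJ} killing the sphere stratum of (\ref{genedecompdsi0}); this is precisely the content of Sections \ref{superpotsec}--\ref{geomsec}. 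Where you genuinely differ is part (2): you pass to the cyclic canonical model at the outset and obtain the bijection $\mathcal M(L;J;\Lambda_+)\cong\mathrm{Crit}(\Psi)$ from homotopy invariance of the Maurer--Cartan moduli together with triviality of the gauge action on $H^1(L;\Lambda_+)$ (strict unit, $H^0(L;\R)=\R$), whereas the paper proves gauge invariance of $\Psi'$ directly on the de Rham model (Proposition \ref{gginvariance}) and only compares with the canonical model in Section \ref{canonicalmodelsec} (Theorem \ref{canmainth}), where the inhomogeneous term $\frak m^{\text{\rm can}}_{-1,\beta}$ must be defined by the sum over trees as in (\ref{sumtreeformula}). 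Your route gives a cleaner statement of (2) with domain $H^1(L;\Lambda_+)$, but it leans on the cyclic canonical model being strictly unital (the paper only asserts unitality in a remark) and on $L$ being connected, neither of which the paper's direct argument needs. Two points you gloss over: $\frak m_{-1,\beta}^{J}$ itself is \emph{not} perturbation-independent even under Assumption \ref{assumpJ}, since the disc-splitting boundary (\ref{partM0betadecomp}) is still present --- only the combination $\Psi$ restricted to the critical locus is invariant; and passing from the mod $T^{E_0}$ statements to the full Novikov ring requires the extension lemma (Lemma \ref{pisoextlem}) and an induction over energy, where the paper avoids constructing inhomogeneous pseudo-isotopies of pseudo-isotopies by the commutation argument (\ref{410})--(\ref{412}). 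These are exactly the places where the remaining work lies, and you correctly identified the compatible perturbation of the whole system (including $\mathcal M_0(\beta;J)$, the forgetful maps and cyclic symmetry) as the main obstacle.
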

We call $\Psi$ the {\it superpotential}. 
The value $\Psi(b)$ depends only on the path connected component 
of $b \in \mathcal M(L;J)$. See Proposition \ref{dependonlyoncomp}.
\begin{crl}\label{maincor}
If $L$ is a rational homology sphere in addition, then $\mathcal M(L;J;\Lambda_+)$ is one point.
So the value of $\Psi$ at that point is an invariant of $M,L,J$.
\end{crl}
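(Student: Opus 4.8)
The plan is to derive Corollary \ref{maincor} formally from Theorem \ref{superpotential} (applied, as there, under Assumption \ref{assumpJ}); the only extra ingredient is an elementary computation of the coefficient group $H^1(L;\Lambda_{+})$.

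First I would record that a rational homology sphere $L$ has $H_1(L;\Q)=0$, so $H_1(L;\Z)$ is a finite group. Since the Novikov ring $\Lambda_0$ contains $\R$, its maximal ideal $\Lambda_{+}$ is a $\Q$-vector space, in particular torsion free as an abelian group. The universal coefficient theorem (with $H_0(L;\Z)=\Z$ free, so no $\mathrm{Ext}$ term) then gives
\[
H^1(L;\Lambda_{+}) \;\cong\; \mathrm{Hom}_{\Z}\bigl(H_1(L;\Z),\Lambda_{+}\bigr) \;=\; 0 ,
\]
a Hom from a torsion group into a $\Q$-vector space. (Equivalently, $H^1(L;\R)=0$ and one passes to Novikov coefficients by the appropriate scalar extension or completion; the answer is the same as $L$ is a compact manifold.)

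Next I invoke Theorem \ref{superpotential}. Part (1) yields $\Psi\colon H^1(L;\Lambda_{+})\to\Lambda_{+}$, whose domain is, by the previous step, the single point $\{0\}$. Part (2) gives a bijection between $\mathcal M(L;J;\Lambda_{+})$ and the critical point set of $\Psi$; but a function on a one-point space has that point as its only critical point (the vanishing of the differential is vacuous when the tangent space is zero), so $\mathcal M(L;J;\Lambda_{+})$ consists of exactly one gauge equivalence class $[b_0]$. Finally, Part (3) asserts that the restriction of $\Psi$ to $\mathcal M(L;J;\Lambda_{+})$ depends only on $M$, $L$, $J$ and not on the perturbations or other auxiliary choices; since this restriction is now just the single number $\Psi(b_0)\in\Lambda_{+}$, that number is the asserted invariant of $(M,L,J)$.

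Thus the corollary is essentially a bookkeeping consequence of Theorem \ref{superpotential}: all the substance — the construction of $\Psi$, the identification of $\mathrm{Crit}(\Psi)$ with gauge equivalence classes of bounding cochains, and the independence of critical values from choices — is already there. The only step demanding any care is the coefficient computation above, and I do not expect a genuine obstacle; one should merely be attentive that torsion in $H_1(L;\Z)$, or the passage to Novikov coefficients, does not secretly reintroduce a class, which the argument rules out.
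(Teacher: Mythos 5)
Your argument is correct and is essentially the paper's own reasoning: the paper (see the remark in Section \ref{canonicalmodelsec}) likewise observes that $H^1(L;\R)=0$ for a rational homology sphere, so the Maurer--Cartan set of the canonical model, and hence $\mathcal M(L;J;\Lambda_+)$, is the single point $0$, and then invokes Theorem \ref{superpotential}.3 for invariance of the value of $\Psi$. Your universal-coefficient computation of $H^1(L;\Lambda_+)$ is just a slightly more formal version of the same coefficient observation, so there is nothing genuinely different to flag.
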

In Sections \ref{superpotsec} and \ref{pseudoiso} we develop the theory 
of superpotential of cyclic filtered $A_{\infty}$ algebra of dimension 3 with additional data 
corresponding to $\frak m^{J}_{-1,\beta}$. 
In Section \ref{superpotsec} we fix our cyclic filtered $A_{\infty}$ algebra and 
review the construction of superpotential and its gauge invariance. 
We next study its relation to pseudo-isotopy of cyclic filtered $A_{\infty}$ algebra
to complete the algebraic part of the proof of Theorem \ref{superpotential} in Section  \ref{pseudoiso}.
The algebraic structure we assumed in Sections \ref{superpotsec} and \ref{pseudoiso} are 
realized in Section \ref{geomsec}, where the proof of Theorem \ref{superpotential} 
is completed.
\par
We can extend the domain $H^1(L;\Lambda_{+})$ of the definition of $\Psi(b;J)$ as follows.
Let $\text{\bf e}_i$, $i=1,\ldots,b_1$ be the basis of $H^1(L;\Z)/{\rm Torsion}$.
We put $\text{\bf b} = \sum x_i \text{\bf e}_i$ where $x_i \in \Lambda_0$.
We put 
$$
y_i = e^{x_i} = \sum_{k=0}^{\infty} \frac{1}{k!} x_i^k.
$$
We define the {\it strongly convergent Laurent power series ring} 
(See \cite{BGR84}.)
$$\Lambda_0 \langle\!\langle y_1,\ldots,y_{b_1},y_1^{-1},\ldots,y_{b_1}^{-1}\rangle\!\rangle$$
as the set of formal sums
\begin{equation}\label{stconele}
f(y_1,\ldots,y_{b_1}) = \sum_{i=1}^{\infty} T^{\lambda_i} P_i(y_1,\ldots,y_{b_1})
\end{equation}
where $\lambda_i \in \R_{\ge 0}$ with $\lim_{i\to\infty} \lambda_i = \infty$ and 
$P_i$ are Laurent {\it polynomial}.
We remark that for each $f$ as in (\ref{stconele}) and 
$\frak y_1,\ldots,\frak y_{b_1} \in \Lambda_0$ with $v(\frak y_i) =0$, the sum
$$
\sum_{i=1}^{\infty} T^{\lambda_i} P_i(\frak y_1,\ldots,\frak y_{b_1} )
$$
converges in $T$ adic topology. Therefore $f(\frak y_1,\ldots,\frak y_{b_1})$ is well defined.
\begin{thm}\label{convmain}
\begin{enumerate}
\item
$\Psi(b,J) \in \Lambda_0 \langle\!\langle y_1,\ldots,y_{b_1},y_1^{-1},\ldots,y_{b_1}^{-1}\rangle\!\rangle.$
\item
There exists $\delta > 0$ such that $\Psi$ is extended to 
\begin{equation}
\{(y_1,\ldots,y_{b_1}) \mid -\delta < v(y_i) < \delta\}.
\end{equation}
\item
Its critical point set is identified with $\mathcal M(L;J)_{\delta}$ which is  introduced in Theorem 
{\rm 1.2} {\rm\cite{F1}}.
\item
The restriction of $\Psi$ to $\mathcal M(L;J)_{\delta}$ is independent of the perturbation etc. and 
depends only on $M,L,J$.
\end{enumerate}
\end{thm}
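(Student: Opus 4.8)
The plan is to establish the four assertions in turn: I would deduce (1) and (2) from an explicit closed form for $\Psi$ together with Gromov compactness, and (3), (4) from the cyclic-symmetry formalism of Sections \ref{superpotsec} and \ref{pseudoiso}, now read over the enlarged coefficient ring.

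\emph{Part (1).} Substituting $\text{\bf b}=\sum x_i\text{\bf e}_i$ into (\ref{correctedsup}) and using the behavior of $\frak m^J_{k,\beta}$ under insertion of classes from $H^1(L)$ established in \cite{F1} (a form of the divisor axiom), one has $\frak m^J_{k,\beta}(\text{\bf b},\ldots,\text{\bf b})=\tfrac1{k!}\,\ell_\beta(x)^k\,\frak m^J_{0,\beta}$, where $\ell_\beta(x)=\langle\partial\beta,\text{\bf b}\rangle=\sum_i x_i\langle\partial\beta,\text{\bf e}_i\rangle$, so that $e^{\ell_\beta(x)}=y^{\partial\beta}:=\prod_i y_i^{\langle\partial\beta,\text{\bf e}_i\rangle}$ is a Laurent monomial. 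Moreover $\frak m^J_{0,\beta}=(\mathrm{ev}_0)_*[\mathcal M_1(\beta;J)]$ is, as a $1$-chain on $L$, Poincar\'e dual up to sign to $(\#\mathcal M_0(\beta;J))\,\partial\beta$ --- this comes from the fibration $\mathcal M_1(\beta;J)\to\mathcal M_0(\beta;J)$ with fibre the boundary circle, and is precisely the identity anticipated by the correction term $\frak m^J_{-1,\beta}$ --- so $\langle\frak m^J_{0,\beta},\text{\bf b}\rangle=\frak m^J_{-1,\beta}\,\ell_\beta(x)$. Summing $\sum_{k\ge0}\tfrac{z^{k+1}}{(k+1)!}=e^z-1$ then collapses $\Psi'(\text{\bf b};J)$ to $\sum_\beta T^{\beta\cap\omega}\frak m^J_{-1,\beta}(y^{\partial\beta}-1)$, and adding the correction term of (\ref{correctedsup}) yields the clean identity
\[
\Psi(\text{\bf b};J)=\sum_{\beta}T^{\beta\cap\omega}\,\frak m^J_{-1,\beta}\,y^{\partial\beta}.
\]
Grouping the right side by the values $\lambda=\beta\cap\omega$ and invoking Gromov compactness --- for each $E$ only finitely many $\beta$ with $\beta\cap\omega\le E$ have $\mathcal M_0(\beta;J)\ne\emptyset$ --- exhibits $\Psi$ in the form (\ref{stconele}) with Laurent-polynomial coefficients $P_i$ and $\lambda_i\to\infty$, which is (1).

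\emph{Part (2).} Gromov compactness also shows that the classes $\beta$ with $\mathcal M_0(\beta;J)\ne\emptyset$ generate a finitely generated sub-semigroup of $H_2(M,L;\Z)$; on a finite generating set $\beta^{(1)},\dots,\beta^{(N)}$ the ratio $\|\partial\beta^{(j)}\|/(\beta^{(j)}\cap\omega)$ is bounded, hence $\|\partial\beta\|\le C\,(\beta\cap\omega)$ for every such $\beta$, with $C=\max_j\|\partial\beta^{(j)}\|/(\beta^{(j)}\cap\omega)$. Set $\delta=1/(2C)$. If $\frak y_i\in\Lambda_0$ have $-\delta<v(\frak y_i)<\delta$, then the term $T^{\beta\cap\omega}y^{\partial\beta}$ of the series above has valuation at least $(\beta\cap\omega)-\delta\,\|\partial\beta\|\ge\tfrac12(\beta\cap\omega)$, which tends to $\infty$ with the energy; so the series converges $T$-adically and takes values in $\Lambda_0$. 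This is (2), and is essentially Theorem~1.2 of \cite{F1} read off for $\Psi$.

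\emph{Parts (3) and (4).} By cyclic symmetry of $\langle\,\cdot\,\rangle$ one has $\partial\Psi/\partial x_i=\langle\sum_{k,\beta}T^{\beta\cap\omega}\frak m^J_{k,\beta}(\text{\bf b},\ldots,\text{\bf b}),\text{\bf e}_i\rangle$ (the factor $1/(k+1)$ cancels against the $k+1$ differentiations combined with cyclicity), and for $\text{\bf b}\in H^1(L)$ this output lies in $H^2(L)$, which for the closed oriented $3$-manifold $L$ is Poincar\'e dual to $H^1(L)$; hence $d\Psi=0$ precisely when $\sum_{k,\beta}T^{\beta\cap\omega}\frak m^J_{k,\beta}(\text{\bf b},\ldots,\text{\bf b})=0$, i.e.\ when $\text{\bf b}$ solves the Maurer--Cartan equation. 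Over $\Lambda_+$ this is Theorem~\ref{superpotential}(2); running the same computation over the enlarged ring of $\frak y_i$ with $v(\frak y_i)\in(-\delta,\delta)$, made legitimate by the convergence established in (1) and (2), identifies the critical point set of the extended $\Psi$ with $\mathcal M(L;J)_\delta$ of \cite{F1}, Theorem~1.2, which is (3). Finally (4) follows the pseudo-isotopy scheme of Section~\ref{pseudoiso}: two perturbations give pseudo-isotopic cyclic filtered $A_\infty$ structures equipped with $\frak m^J_{-1}$-data, and the induced bijection of critical point sets preserves the values of $\Psi$, exactly as for Theorem~\ref{superpotential}(3). The one genuinely new point will be that the estimate in (2) must hold uniformly along the (compact) isotopy so that a single $\delta>0$ serves throughout; this parametrized form of the Gromov-compactness input is, I expect, the main obstacle, the remainder being a reprise of constructions already in place.
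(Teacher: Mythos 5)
Your parts (1), (3), (4) follow essentially the paper's own route: the resummation identity $\Psi(\text{\bf b};J)=\sum_\beta T^{\beta\cap\omega}\frak m^J_{-1,\beta}\,y^{\partial\beta}$ is exactly Theorem \ref{strPsi}, proved there by the divisor-type identity $\langle\frak m^J_{k,\beta}(\rho,\ldots,\rho),\rho\rangle=\frac{1}{k!}(\rho\cap\partial\beta)^{k+1}\frak m^J_{-1,\beta}$ imported from \cite{F1} Lemma 13.1 (your ``fibration over $\mathcal M_0$'' heuristic is the forgetful-map compatibility that makes this rigorous), and (3), (4) are obtained, as you do, by rerunning Proposition \ref{prp21} and the pseudo-isotopy argument over the enlarged domain once convergence is known.

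The genuine gap is in your part (2). You deduce the key estimate $\|\partial\beta\|\le C\,(\beta\cap\omega)$ from the claim that Gromov compactness makes the classes with $\mathcal M_0(\beta;J)\ne\emptyset$ lie in a finitely generated sub-semigroup with bounded ratios. Gromov compactness only gives finiteness of such classes below each energy level; it says nothing about classes of large energy, and does not prevent a sequence $\beta_n$ with $\beta_n\cap\omega=n$ but $\|\partial\beta_n\|\sim n^2$ (one class per energy level), which would destroy both your semigroup claim and the linear bound, hence the convergence on $\{-\delta<v(y_i)<\delta\}$. The needed inequality is a genuinely geometric statement about boundaries of holomorphic discs, and it is precisely what the paper's proof of (2) supplies: take a Weinstein neighborhood, let $L(c)$ be the graph of the small closed one-form $\sum c_i\text{\bf e}_i$, choose $F_c$ with $F_c(L)=L(c)$ and $(F_c)_*J$ tamed by $\omega$, and use the induced identification $\mathcal M_0(L(c);(F_c^{-1})^*\beta;(F_c)_*J)\cong\mathcal M_0(L;\beta;J)$ together with \cite{F1} Lemma 13.5. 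Positivity of the energy of discs on $L(c)$ for the tame structure gives $\beta\cap\omega-\sum_i c_i\,\partial_i\beta>0$ for all $|c_i|\le\delta$, i.e. exactly the bound you wanted, and simultaneously identifies the shifted series $\Psi(y(c);L;J)$ with the (convergent) superpotential of $(L(c),(F_c)_*J)$. So either reproduce this moving-Lagrangian argument or cite \cite{F1} Theorem 1.2 for the estimate itself; as written, your derivation of the estimate does not hold up, and the same geometric input (not merely a ``parametrized Gromov compactness'') is what gives a uniform $\delta$ in your discussion of (4).
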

Here $v(\cdot)$ is defined by
$$
v\left(\sum a_iT^{\lambda_i}\right) = \inf\{ \lambda_i \mid a_i \ne 0\}.
$$
\par
We prove Theorem \ref{convmain} in Section \ref{conv}.
\par
In Section \ref{canonicalmodelsec} we use canonical model constructed in 
\cite{FOOO080} Section 4.5 and \cite{F1} Section 10, 
to rewrite the definition of $\Psi$. 
\par
In Section \ref{wallcrosssec} we discuss the way how superpotential $\Psi$ depends on 
almost complex structure. The main result is 
Theorem \ref{wallcross} below.
We assume that $J_0$ and $J_1$ satisfy Assumption \ref{assumpJ}.
We take a path $\mathcal J = \{J_t\mid t \in [0,1]\}$ 
of tame almost complex structures joining them.
Let $\mathcal M_1^{\text{\rm cl}}(\alpha;J)$ be the moduli space of $J$ holomorphic stable maps 
of genus zero in $M$ of homology class $\alpha \in H_2(M;\Z)$ and with one marked point.
It has a Kuranishi structure of (virtual) dimension 2. We put
\begin{equation}
\mathcal M^{\text{\rm cl}}_1(\alpha;\mathcal J) = \bigcup_{t \in [0,1]} \{t\} \times \mathcal M^{\text{\rm cl}}_1(\alpha;J_t).
\end{equation}
Using evaluation map $ev : \mathcal M^{\text{\rm cl}}_1(\alpha;J) \to M$ we obtain a virtual 
fundamental chain 
$ev_*([\mathcal M^{\text{\rm cl}}_1(\alpha;\mathcal J)])$
of dimension $3$. Since $J_0$ and $J_1$ satisfy Assumption \ref{assumpJ} it follows that
$$
L \cap  ev(\partial\mathcal M^{\text{\rm cl}}_1(\alpha;\mathcal J)) = \emptyset.
$$
Therefore
\begin{equation}\label{wallcrossformula}
n(L;\alpha;\mathcal J) = [L] \cap ev_*([\mathcal M^{\text{\rm cl}}_1(\alpha;\mathcal J)]) \in \Q
\end{equation}
is well-defined. Moreover it depends only on $M,L,\alpha,J_0,J_1$ and 
is independent of the path $\mathcal J$.
\begin{thm}\label{wallcross}
Let $[b] \in \mathcal M(L;J_0)$.
We take canonical isomorphism $I_* : \mathcal M(L;J_0) \to \mathcal M(L;J_1)$ in 
{\rm \cite{FOOO080}} Section {\rm 4.3}.  Then we have:
\begin{equation}\label{wallcrossformula}
\Psi(I_*(b),J_1) - \Psi(b,J_0)
= \sum_{\alpha \in H_2(M;\Z)} T^{\alpha \cap \omega} n(L;\alpha;\mathcal J).
\end{equation}
\end{thm}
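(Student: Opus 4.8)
The plan is to compare the two cyclic filtered $A_\infty$ structures $\{\frak m_{k,\beta}^{J_0}\}$ and $\{\frak m_{k,\beta}^{J_1}\}$ (together with the extra data $\frak m_{-1,\beta}^{J_i}$) by constructing a pseudo-isotopy between them out of the moduli spaces of $J_t$-holomorphic discs, $t\in[0,1]$, and then applying the algebraic machinery of Sections \ref{superpotsec} and \ref{pseudoiso} that relates superpotentials of pseudo-isotopic cyclic $A_\infty$ algebras. First I would set up the one-parameter family moduli spaces $\mathcal M_{k+1}(\beta;\mathcal J) = \bigcup_{t\in[0,1]}\{t\}\times\mathcal M_{k+1}(\beta;J_t)$ and $\mathcal M_0(\beta;\mathcal J)$, equip them with Kuranishi structures and CF-perturbations compatible with the fibrewise forgetful and evaluation maps, and extract operators $\frak m_{k,\beta}^{\mathcal J}$ and $\frak m_{-1,\beta}^{\mathcal J}$ on $\Lambda(L)\otimes(\Omega([0,1])\oplus\Omega([0,1])dt)$ as in \cite{F1}. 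The $A_\infty$ relations for this family structure hold because the boundary of $\mathcal M_{k+1}(\beta;\mathcal J)$ decomposes into the usual codimension-one strata (fibre products of disc moduli spaces over $L$) plus the two endpoint fibres $t=0,1$; cyclic symmetry is inherited from the $S^1$-action/forgetful-map argument of \cite{F1} applied fibrewise.

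The key new contribution compared with the closed $A_\infty$ pseudo-isotopy of \cite{FOOO080} is the disc–sphere bubbling that occurs precisely because the intermediate $J_t$ need not satisfy Assumption \ref{assumpJ}. Next I would analyse the boundary and corner structure of $\mathcal M_0(\beta;\mathcal J)$: its codimension-one boundary consists of (i) the endpoint fibres over $t=0,1$, which give $\frak m_{-1,\beta}^{J_1}-\frak m_{-1,\beta}^{J_0}$; (ii) strata where a disc bubbles off, contributing the usual $A_\infty$-type terms that get absorbed into the Maurer–Cartan/gauge bookkeeping; and (iii) a new stratum where a $J_t$-holomorphic sphere of class $\alpha$ bubbles off at an interior point of a disc (or, in the degenerate limit, where the whole disc shrinks), whose contribution is governed by the fibre product of $\mathcal M_1^{\text{cl}}(\alpha;\mathcal J)$ with a disc moduli space along $ev$ over $L$. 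Running the Stokes-type identity $0=\int_{\mathcal M_0(\beta;\mathcal J)}d(\dots)$ (i.e. the $d/dt$-part of the family $A_\infty$ relation specialized to the $\frak m_{-1}$-component) and integrating over $t\in[0,1]$ then yields an equation of the schematic form
$$
\frak m_{-1,\beta}^{J_1}-\frak m_{-1,\beta}^{J_0}
= (\text{$A_\infty$-exact terms involving }\frak m_{k,\beta}^{\mathcal J})
+ \sum_{\alpha\mapsto\beta} n(L;\alpha;\mathcal J),
$$
where the last sum records exactly the sphere-bubble stratum and, after pairing with $[L]$, produces the numbers $n(L;\alpha;\mathcal J) = [L]\cap ev_*([\mathcal M_1^{\text{cl}}(\alpha;\mathcal J)])$ of \eqref{wallcrossformula}; that these depend only on $M,L,\alpha,J_0,J_1$ and not on $\mathcal J$ follows from a further cobordism argument between two paths, as already asserted in the text preceding the theorem.

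Having this family structure, I would invoke the algebraic results of Section \ref{pseudoiso}: a pseudo-isotopy of cyclic filtered $A_\infty$ algebras of dimension $3$ with $\frak m_{-1}$-data induces the canonical isomorphism $I_*:\mathcal M(L;J_0)\to\mathcal M(L;J_1)$ on Maurer–Cartan sets (this is the cyclic refinement of \cite{FOOO080} Section 4.3), and moreover transports critical points of $\Psi(\cdot,J_0)$ to critical points of $\Psi(\cdot,J_1)$ with the change in the critical value being precisely the integral over $t$ of the $\frak m_{-1}$-variation computed above. The $A_\infty$-exact terms contribute nothing once restricted to the Maurer–Cartan locus — this is the same mechanism by which $\Psi$ is shown to be gauge- and perturbation-independent in Theorem \ref{superpotential}(3) — so only the sphere-bubble term survives, giving \eqref{wallcrossformula}. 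Convergence of the right-hand side in the $T$-adic topology is automatic from Gromov compactness (finitely many $\alpha$ below any energy bound).

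The main obstacle I expect is (ii)/(iii) above: controlling the new boundary stratum of $\mathcal M_0(\beta;\mathcal J)$ where a sphere bubbles onto a disc, and identifying its virtual count — as an element with the correct sign, orientation, and Kuranishi-multiplicity — with $n(L;\alpha;\mathcal J)$. One must check that the relevant fibre product $\mathcal M_k^{\text{main}}(\beta';\mathcal J)\times_{L}\mathcal M_1^{\text{cl}}(\alpha;\mathcal J)$ carries a Kuranishi structure compatible with the one on $\mathcal M_0(\beta;\mathcal J)$, that the CF-perturbations can be chosen consistently across all these strata simultaneously (a compatible system of perturbations over the whole family, including the closed moduli spaces), and that when the disc component is constant the evaluation constraint collapses correctly so that only $[L]\cap ev_*([\mathcal M_1^{\text{cl}}(\alpha;\mathcal J)])$ remains rather than some higher $A_\infty$ correction. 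Orientation signs in the cyclic-symmetric setting, where the pairing $\langle\cdot\rangle$ intervenes, are a further delicate point that will require care; these are exactly the geometric inputs that Section \ref{geomsec} is set up to supply.
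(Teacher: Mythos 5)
Your proposal follows essentially the same route as the paper: Section \ref{wallcrosssec} constructs the parametrized moduli spaces $\mathcal M_k(\beta;\mathcal J)$, observes that the codimension-one boundary of $\mathcal M_0(\beta;\mathcal J)$ acquires the extra sphere-bubble stratum $\mathcal M_1^{\text{cl}}(\tilde\beta;J_t)\,{}_{ev_0}\times_M L$, and applies Stokes' theorem to show that $\frak m^t_{-1,\beta}$, corrected by $n(L;\tilde\beta;\mathcal J;t)$, satisfies the pseudo-isotopy equation, giving Theorem \ref{thm61}; the formula then follows from pseudo-isotopy invariance of the superpotential (Theorem \ref{pseudoisowelldef}), exactly as you invoke. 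The only cosmetic difference is that the paper absorbs the sphere contribution into a redefinition of $\frak m^t_{-1,\beta}$ so that (\ref{dervativemt}) holds on the nose, rather than carrying it as a separate term in the Stokes identity to be cancelled on the Maurer--Cartan locus.
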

Theorem \ref{wallcross} is proved in Section \ref{wallcrosssec}.
In Section \ref{DTawx} we discuss some conjectures, open problems, and relations 
to various related topics.
\begin{rem}
\begin{enumerate}
\item 
Superpotential of the form (\ref{superpote}) appears in 
the physics literature \cite{Laza01, Tom01}.
\item
The idea to include the 2nd term of (\ref{correctedsup}) to obtain 
a numerical invariant of Lagrangian submanifold is due to 
D. Joyce. It was communicated to the author by P. Seidel around 
2002, who also explained him the importance of cyclic symmetry for this 
purpose. 
(However the appearance of nontrivial wall crossing by the change of $J$  
was unknown at that time.)
\item
The appearance of the nonzero wall crossing term 
in the right hand side of (\ref{wallcrossformula}) is 
closely related to the phenomenon discussed in \cite{FOOO080} Section 3.8 and Subsection 7.4.1.
Around the same time as the authors of \cite{FOOO080} found this phenomenon, a similar 
observation was done independently by M. Liu \cite{Liu02}. 
\item
A related homological algebra was discussed before 
by \cite{Cho, Ka}. The part concerning the second term of  (\ref{correctedsup}) 
is  not discussed there. 
\item
All the $A_{\infty}$ algebras and pseudo-isotopies between them 
which appear in the geometric situation in this paper, 
are unital. We omit the argument on unitality since it is a straight forward analog 
of one in \cite{F1}.
\end{enumerate}
\end{rem}
The author would like to thank to Y.-G.Oh, H. Ohta, and K. Ono. 
Joint works with them are indispensable for the author to write this paper.
\section{Superpotential and its gauge invariance}
\label{superpotsec}
Let $(C,\langle\cdot\rangle,\{\frak m_{k,\beta}\})$ be a $G$-gapped cyclic filtered $A_{\infty}$
algebra of dimension $3$. 
Recall that $G \subset \R_{\ge 0} \times 2\Z$ is a discrete submononid in the sense of 
\cite{FOOO080} Condition 3.1.6, \cite{F1} Definition 6.2.
In this paper we always assume 
\begin{equation}
G \subset \R_{\ge 0} \times \{0\}.
\end{equation}
Namely $G \subset \R_{\ge 0}$. In this case 
$$
\frak m_{k,\beta} : B_k(\overline C[1]) \to \overline C[1]
$$ 
is always of degree $1$ (after degree shift).
We put $C_+ = \overline C\otimes_{\R}\Lambda_+$.
\begin{dfn}
We define 
$$
\Psi' : C_+^1 \to \Lambda_0
$$ by
\begin{equation}\label{superpoteprime}
\Psi'(b) = \sum_{k=0}^{\infty}\sum_{\beta\in G}
\frac{T^{E(\beta)}}{k+1}\langle \frak m_{k,\beta}(b,\ldots,b),b\rangle.
\end{equation}
\end{dfn}
\begin{rem}
\begin{enumerate}
\item 
More precisely the right hand side of (\ref{superpoteprime}) converges in 
$T$ adic topology.
In various cases, 
it converges in the topology of  \cite{F1} Definition 13.1.
(It converges in the case of filtered $A_{\infty}$ algebra 
of Lagrangian Floer theory by  \cite{F1} Theorem 1.2.)
See Section \ref{conv} on the convergence.
\item 
Since $\deg'b = \deg b -1 = 0$. We have
$$
\deg'  \frak m_{k,\beta}(b,\ldots,b) = 1.
$$
Namely $\deg  \frak m_{k,\beta}(b,\ldots,b) +\deg b = 3$.
Therefore in case the dimension of our cyclic filtered $A_{\infty}$
algebra is 3, the inner product in the right hand side of (\ref{superpoteprime})
is well defined. 
\end{enumerate}
\end{rem}
We fix a basis $\text{\bf e}_i \in \overline C$ and 
put
$
b = \sum x_i\text{\bf e}_i.
$
Then
$
\Psi'(b) = \sum_{\beta} P_{\beta}(x_1,\ldots)
$
where $P_{\beta}$ is a formal power series.
Therefore we can differentiate $\Psi'$ formally.
We have:
\begin{prp}\label{prp21}
If $b \in C^1_+$ then the differential of 
$\Psi'$ vanishes at $b$ if and only if
\begin{equation}\label{MCeq}
\sum_{\beta\in H_2(M,L;\Z)}\sum_{k=0}^{\infty} T^{E(\beta)} \frak m_{k,\beta}(b,\ldots,b) = 0.
\end{equation}
\end{prp}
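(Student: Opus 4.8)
The plan is to compute the formal differential of $\Psi'$ at $b$ directly from multilinearity of the maps $\frak m_{k,\beta}$ and the cyclic symmetry of $\langle\cdot,\cdot\rangle$, and then to invoke nondegeneracy of the pairing. Writing $b=\sum x_i\text{\bf e}_i$ as in the paragraph preceding the statement, the differential vanishes at $b$ precisely when every formal partial derivative $\partial\Psi'/\partial x_i$ vanishes at $b$; equivalently, when the formal directional derivative $D\Psi'(b)(\xi)=\sum_i\xi_i\,(\partial\Psi'/\partial x_i)(b)$ vanishes for every $\xi=\sum_i\xi_i\text{\bf e}_i\in\overline C^1$. I would work with such a general $\xi$, noting $\deg'\xi=0$ and $\deg'b=0$, since the cyclic cancellation then reads off transparently; the partials are recovered by taking $\xi=\text{\bf e}_i$.

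First I would differentiate term by term. As $\frak m_{k,\beta}(b,\ldots,b)$ is $k$-linear in $b$ and the outer pairing contributes one further $b$, the Leibniz rule yields
$$
D\Psi'(b)(\xi)=\sum_{k=0}^{\infty}\sum_{\beta\in G}\frac{T^{E(\beta)}}{k+1}\left(\sum_{j=1}^{k}\langle\frak m_{k,\beta}(b,\ldots,\xi,\ldots,b),b\rangle+\langle\frak m_{k,\beta}(b,\ldots,b),\xi\rangle\right),
$$
where in the $j$-th inner term $\xi$ occupies the $j$-th slot of $\frak m_{k,\beta}$, so the parenthesis has exactly $k+1$ summands. (Interchanging differentiation and summation is legitimate since, $G$ being discrete and $b\in C^1_+$, all sums converge in the $T$-adic topology, as in the Remark after the definition of $\Psi'$.) Next I would invoke cyclic symmetry: for a cyclic filtered $A_{\infty}$ algebra one has $\langle\frak m_{k,\beta}(x_1,\ldots,x_k),x_0\rangle=\pm\langle\frak m_{k,\beta}(x_0,x_1,\ldots,x_{k-1}),x_k\rangle$, the sign being determined by the shifted degrees $\deg'x_j$ (see \cite{F1}). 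Since $\deg'b=\deg'\xi=0$, every such sign equals $+1$, so each of the $k+1$ summands in the parenthesis equals $\langle\frak m_{k,\beta}(b,\ldots,b),\xi\rangle$; the parenthesis is therefore $(k+1)\langle\frak m_{k,\beta}(b,\ldots,b),\xi\rangle$, the factor $k+1$ cancels the denominator, and
$$
D\Psi'(b)(\xi)=\left\langle\,\sum_{k=0}^{\infty}\sum_{\beta\in G}T^{E(\beta)}\frak m_{k,\beta}(b,\ldots,b),\ \xi\,\right\rangle\qquad(\xi\in\overline C^1).
$$

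Finally, each $\frak m_{k,\beta}(b,\ldots,b)$ has $\deg'=1$, i.e. ordinary degree $2$, so $X:=\sum_{k,\beta}T^{E(\beta)}\frak m_{k,\beta}(b,\ldots,b)\in\overline C^2\otimes_{\R}\Lambda_0$; and on a cyclic $A_{\infty}$ algebra of dimension $3$ the pairing restricts to a perfect $\R$-pairing between $\overline C^2$ and $\overline C^1$. Expanding $X$ in a basis of $\overline C^2$ and letting $\xi$ run over a basis of $\overline C^1$, invertibility of the resulting real Gram matrix shows that $D\Psi'(b)(\xi)=0$ for all $\xi\in\overline C^1$ if and only if $X=0$, which is (\ref{MCeq}). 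The only delicate point is the sign bookkeeping, both in the Leibniz expansion and in the cyclic relation; I expect this to be the sole obstacle, and it is mild: everything collapses precisely because $b$ and $\xi$ carry even shifted degree, so no stray $(-1)$ survives, but I would write this out with care.
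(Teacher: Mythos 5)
Your argument is correct, but it is not the route the paper takes: the paper gives no computation at all and simply cites \cite{FOOO080} Proposition 3.6.50 for this statement. Your self-contained proof is the natural one, and in fact it is exactly the cyclic-symmetry collapse that the paper does spell out a few lines later in the proof of Proposition \ref{gginvariance} (the computation (\ref{keisanpsi})): Leibniz gives $k+1$ summands, the signs are all $+1$ because $b$ and $\xi$ have shifted degree $0$, the factor $k+1$ cancels, and one is left with $D\Psi'(b)(\xi)=\langle X,\xi\rangle$ where $X$ is the Maurer--Cartan sum, which has shifted degree $1$ (degree $2$), so the dimension-$3$ pairing against $\overline C{}^1$ is the right one. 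What your version buys is a proof internal to the cyclic framework of this paper rather than an appeal to the general (non-cyclic, coordinate) argument in \cite{FOOO080}. Two small points of care: the reduction from ``all partials vanish'' to ``$X=0$'' needs only nondegeneracy of the pairing between $\overline C{}^2$ and $\overline C{}^1$, applied coefficientwise in the $T$-power expansion of $X\in\overline C{}^2\otimes\Lambda_0$; your ``perfect pairing / invertible Gram matrix'' phrasing is literally accurate only in the finite-dimensional (canonical model) setting, whereas for $\overline C=\Lambda(L)$ one should just invoke nondegeneracy of the integration pairing, which is part of the cyclic structure. Also, when you move $\xi$ from the $j$-th input slot to the pairing slot you are iterating the cyclic rotation $j$ times (or $k+1-j$ times), which is harmless here since every sign is $+1$, but it is worth saying explicitly that the rotation is applied repeatedly, not once.
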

This is \cite{FOOO080} Proposition 3.6.50.
(\ref{MCeq}) is called the $A_{\infty}$ Maurer-Cartan equation.
\begin{dfn}
$\widetilde{\mathcal M}(C;\Lambda_+)$ is the set of all $b \in C^1_+$ satisfying 
(\ref{MCeq}).
\end{dfn}
We next review the definition of gauge equivalence from \cite{FOOO080} Section 4.3.
We consider 
\begin{equation}\label{bcdef}
b(t) = \sum_{\beta: E(\beta)>0} T^{E(\beta)}b_{\beta}(t), \qquad
c(t) = \sum_{\beta: E(\beta)>0} T^{E(\beta)}c_{\beta}(t)
\end{equation}
where 
$b_{\beta}(t)$, $c_{\beta}(t)$ are polynomial with coefficeint in 
$\overline C^1$, $\overline C^0$ respectively. 
\begin{dfn}[See \cite{FOOO080} Proposition 4.3.5]
We say $b_0 \in \widetilde{\mathcal M}(C;\Lambda_+)$ is 
gauge equivalent to $b_1 \in \widetilde{\mathcal M}(C;\Lambda_+)$ 
if there exists $b(t), c(t)$ as in (\ref{bcdef}) such that:
\smallskip
\begin{enumerate}
\item $b(0) = b_0$, $b(1) = b_1$.
\item 
\begin{equation}
\frac{d}{dt} b(t)  + 
\sum_{k=1}^{\infty} \frak m_k(b(t),\ldots,b(t),c(t),b(t),\ldots,b(t)) = 0.
\end{equation}
\end{enumerate}
\item It is proved in \cite{FOOO080} Lemma 4.3.4 that gauge equivalence is 
an equivalence relation. We denote by
${\mathcal M}(C;\Lambda_+)$ the set of gauge equivalence classes.
\end{dfn}
\begin{rem}
It follows from 1,2 that $b(t) \in  \widetilde{\mathcal M}(C;\Lambda_+)$
for any $t$. (\cite{FOOO080} Lemma 4.3.7.)
\end{rem}
\begin{prp}\label{gginvariance}
If $b_0 \in \widetilde{\mathcal M}(C;\Lambda_+)$ is 
gauge equivalent to $b_1 \in \widetilde{\mathcal M}(C;\Lambda_+)$  then
$$
\Psi'(b_0) = \Psi'(b_1).
$$
\end{prp}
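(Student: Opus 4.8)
The plan is to show that the function $\phi(t) := \Psi'(b(t))$ is constant on $[0,1]$, where $b(t),c(t)$ are as in $(\ref{bcdef})$ realizing the gauge equivalence, so that $\Psi'(b_0) = \phi(0) = \phi(1) = \Psi'(b_1)$. Write $b(t) = \sum_i x_i(t)\,\mathbf e_i$ in a fixed basis $\{\mathbf e_i\}$ of $\overline C^1$. By $(\ref{bcdef})$ each $x_i(t)$ is a polynomial in $t$ whose coefficients lie in $\Lambda_+$; in particular $v(x_i(t))>0$ and $v(\dot x_i(t))>0$ for every $t$, and $t\mapsto x_i(t)$ is, coefficientwise in $T$, a polynomial (hence differentiable) function of $t$. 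By the Remark following the definition of gauge equivalence, conditions (1) and (2) force $b(t)\in\widetilde{\mathcal M}(C;\Lambda_+)$ for all $t\in[0,1]$, i.e. $b(t)$ satisfies the $A_\infty$ Maurer--Cartan equation $(\ref{MCeq})$ for every $t$.

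Next I would differentiate. Writing $\Psi'(b) = \sum_{\beta\in G} P_\beta(x_1,x_2,\dots)$ with each $P_\beta$ a formal power series, and recalling that substituting the $x_i(t)\in\Lambda_+$ makes every sum converge in the $T$-adic topology, one obtains the chain rule
\[
\frac{d}{dt}\,\Psi'(b(t)) \;=\; \sum_i \left.\frac{\partial \Psi'}{\partial x_i}\right|_{b(t)}\dot x_i(t).
\]
The only point needing care is the legitimacy of interchanging $d/dt$ with the infinite sums over $\beta\in G$ and over the monomials of each $P_\beta$; this is routine given the valuation bounds just noted, in the same spirit as the convergence arguments of Section $\ref{conv}$. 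Now, by Proposition $\ref{prp21}$, the fact that $b(t)$ solves $(\ref{MCeq})$ is \emph{equivalent} to the vanishing of the differential of $\Psi'$ at $b(t)$, i.e. to $\partial \Psi'/\partial x_i$ vanishing at $b(t)$ for every $i$. Hence $\frac{d}{dt}\Psi'(b(t))=0$ for all $t$, $\phi$ is constant, and the proposition follows.

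The substantive content has in fact already been packaged into Proposition $\ref{prp21}$ --- whose proof uses the cyclic symmetry of $\langle\cdot\rangle$ to identify the differential of $\Psi'$ at $b$ with the Maurer--Cartan map $b\mapsto \sum_{k,\beta}T^{E(\beta)}\frak m_{k,\beta}(b,\dots,b)$, the factor $\frac1{k+1}$ in $(\ref{superpoteprime})$ being exactly cancelled by the $k+1$ equal terms produced when differentiating the $k+1$ occurrences of $b$ --- together with the Remark that a gauge path stays inside $\widetilde{\mathcal M}$. If one prefers an argument not routed through Proposition $\ref{prp21}$, one may instead substitute $\dot b(t) = -\sum_k \frak m_k(b(t),\dots,c(t),\dots,b(t))$ directly into $\frac{d}{dt}\Psi'(b(t)) = \sum_{k,\beta}T^{E(\beta)}\langle \frak m_{k,\beta}(b(t),\dots,b(t)),\dot b(t)\rangle$ and use the cyclically symmetric $A_\infty$ relations to see the expression collapses to $0$; this is essentially the same computation that shows $(\ref{MCeq})$ is preserved under gauge equivalence. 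In either form, the only real obstacle is the bookkeeping that justifies termwise differentiation of these $T$-adically convergent series, which poses no genuine difficulty.
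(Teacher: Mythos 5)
Your proof is correct and takes essentially the same route as the paper: differentiate $\Psi'(b(t))$ along the gauge path, use that $b(t)$ stays in $\widetilde{\mathcal M}(C;\Lambda_+)$ (so only the path, not $c(t)$, matters), and conclude the $t$-derivative vanishes because the differential of $\Psi'$ vanishes at Maurer--Cartan points. The only cosmetic difference is that the paper writes out the cyclic-symmetry cancellation of the $\frac{1}{k+1}$ factors explicitly in its own display rather than citing Proposition \ref{prp21}, but that is the identical computation.
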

\begin{proof}
We have
\begin{equation}\label{keisanpsi}
\aligned
\frac{d}{dt}\Psi'(b) &=\frac{d}{dt} \sum_{k=0}^{\infty}\sum_{\beta\in G}
\frac{T^{E(\beta)}}{k+1}\langle \frak m_{k,\beta}(b(t),\ldots,b(t)),b(t)\rangle
\\
&= \sum_{k=0}^{\infty}\sum_{\beta\in G}
\frac{T^{E(\beta)}}{k+1}\left\langle \frak m_{k,\beta}(b(t),\ldots,\frac{db(t)}{dt},\ldots,b(t)),b(t)\right\rangle \\
&\quad+ \sum_{k=0}^{\infty}\sum_{\beta\in G}
\frac{T^{E(\beta)}}{k+1}\left\langle \frak m_{k,\beta}(b(t),\ldots,b(t)),\frac{db(t)}{dt}\right\rangle \\
&=  \sum_{k=0}^{\infty}\sum_{\beta\in G}\left\langle \frak m_{k,\beta}(b(t),\ldots,b(t)),\frac{db(t)}{dt}\right\rangle.
\endaligned
\end{equation}
Since $b(t) \in \widetilde{\mathcal M}(C;\Lambda_+)$, it follows that (\ref{keisanpsi}) is zero.
\end{proof}
By Proposition \ref{gginvariance} we obtain 
\begin{equation}
\Psi' : {\mathcal M}(C;\Lambda_+) \to \Lambda_0.
\end{equation}
We remark that in the proof of Proposition \ref{gginvariance} we only use the existence of 
families $b(t)$ in $\widetilde{\mathcal M}(C;\Lambda_+)$ joining $b_0$ and $b_1$. 
In other words, we did not use the existence of $c(t)$.
Therefore we have:
\begin{prp}\label{dependonlyoncomp}
If the map $t \mapsto b(t) \in \widetilde{\mathcal M}(C;\Lambda_+)$ is a $C^1$ map then 
$$
\Psi(b(0)) = \Psi(b(1)).
$$
\end{prp}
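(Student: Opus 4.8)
The plan is to recognize that this is the argument already used for Proposition~\ref{gginvariance}, with the auxiliary path $c(t)$ simply deleted. First I would fix the basis $\text{\bf e}_i$ of $\overline C^1$ and write $b(t)=\sum_i x_i(t)\,\text{\bf e}_i$ with each $x_i(t)\in\Lambda_+$ a $C^1$ function of $t$. Since $\Psi'(b)=\sum_\beta P_\beta(x_1,\dots)$ is a sum of formal power series whose terms have $T$-adic valuation tending to $+\infty$ (uniformly over $b\in C^1_+$), the composite $t\mapsto\Psi'(b(t))$ is $C^1$ and may be differentiated term by term. Distributing $d/dt$ over the $k+1$ occurrences of $b$ in $\langle\frak m_{k,\beta}(b,\dots,b),b\rangle$ and using cyclic symmetry of $\langle\cdot\rangle$ — all arguments have $\deg'=0$, so no signs intervene — to rewrite each of the $k+1$ resulting terms as $\langle\frak m_{k,\beta}(b,\dots,b),\dot b\rangle$, the factor $k+1$ cancels the $1/(k+1)$, and one obtains, exactly as in~(\ref{keisanpsi}),
\begin{equation}
\frac{d}{dt}\Psi'(b(t))=\left\langle\sum_{k=0}^{\infty}\sum_{\beta\in G}T^{E(\beta)}\,\frak m_{k,\beta}(b(t),\dots,b(t)),\ \frac{db(t)}{dt}\right\rangle .
\end{equation}

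Next I would invoke the hypothesis: since $b(t)\in\widetilde{\mathcal M}(C;\Lambda_+)$ for every $t$, the first slot of this pairing is precisely the left-hand side of the $A_\infty$ Maurer--Cartan equation~(\ref{MCeq}) and hence vanishes identically, so $\frac{d}{dt}\Psi'(b(t))=0$ and $t\mapsto\Psi'(b(t))$ is constant; thus $\Psi'(b(0))=\Psi'(b(1))$. Finally, in the geometric situation the corrected superpotential $\Psi$ of~(\ref{correctedsup}) differs from $\Psi'$ only by the term $\sum_{\beta}T^{\beta\cap\omega}\frak m^J_{-1,\beta}$, which does not involve $b$ and is therefore $t$-independent, so the same identity holds with $\Psi$ in place of $\Psi'$; the remark in the introduction that $\Psi(b)$ depends only on the path-connected component of $b$ follows at once.

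There is essentially no obstacle here: the entire content lies in the computation~(\ref{keisanpsi}) already performed for Proposition~\ref{gginvariance}. The single point worth making explicit — and the reason for recording this proposition separately — is that that computation used only (i) cyclicity of $\langle\cdot\rangle$ and (ii) the fact that $b(t)$ solves~(\ref{MCeq}), and never used the companion path $c(t)$ or the special shape~(\ref{bcdef}) of the family; a general $C^1$ family in $\widetilde{\mathcal M}(C;\Lambda_+)$ therefore suffices, the gauge-equivalence paths of~(\ref{bcdef}) being merely a special case. The only mild care required is the justification of term-by-term differentiation in $t$, which follows from $T$-adic control of the series defining $\Psi'$ (or, where available, the stronger convergence of~\cite{F1} Definition~13.1).
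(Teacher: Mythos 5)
Your argument is exactly the paper's: the author proves this proposition simply by remarking that the computation~(\ref{keisanpsi}) in Proposition~\ref{gginvariance} uses only cyclic symmetry and the Maurer--Cartan equation satisfied by $b(t)$, never the auxiliary path $c(t)$, so any $C^1$ family in $\widetilde{\mathcal M}(C;\Lambda_+)$ suffices. Your additional remarks on term-by-term differentiation and on the $b$-independence of the inhomogeneous term are correct and harmless elaborations of the same proof.
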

\begin{rem}
Proposition \ref{dependonlyoncomp} may imply that superpotential is locally constant 
on $\mathcal M(C;\Lambda_+)$ and so $\Psi$ depends only on the `irreducible component' 
of $\mathcal M(C;\Lambda_+)$. Since the property of $\mathcal M(C;\Lambda_+)$ as a topological 
space can be rather complicated, we do not try to study this point in this paper.
\end{rem}
\section{Pseudo-isotopy invariance}
\label{pseudoiso}
In \cite{F1} Definition 8.5, it is defined that
 $(C,\langle \cdot \rangle,\{\frak m^t_{k,\beta}\},\{\frak c^t_{k,\beta}\})$
is a pseudo-isotopy of cyclic filtered $A_{\infty}$ algebra if:
\smallskip
\begin{enumerate}
\item  $\frak m^t_{k,\beta}$ and $\frak c^t_{k,\beta}$ are smooth. Namely 
$$
t\mapsto \frak m^t_{k,\beta}(x_1,\ldots,x_k)
$$
is smooth. (That is the coefficient is a smooth function of $t \in [0,1]$.)
\item For each (but fixed) $t$, the triple  $(C,\langle \cdot \rangle,\{\frak m^t_{k,\beta}\})$ defines a cyclic fitered $A_{\infty}$
algebra.
\item For each (but fixed) $t$, and $x_i \in \overline C[1]$, we have
\begin{equation}\label{inhomomainformula}
\langle 
\frak c^t_{k,\beta}(x_1,\ldots,x_k),x_0
\rangle
= 
(-1)^*\langle 
\frak c^t_{k,\beta}(x_0,x_1,\ldots,x_{k-1}),x_{k} 
\rangle
\end{equation}
$* = (\deg \text{\bf x}_0+1)(\deg \text{\bf x}_1+\ldots + \deg \text{\bf x}_k + k)$.
\item  For each $x_i \in \overline C[1]$
\begin{equation}\label{isotopymaineq}
\aligned
&\frac{d}{dt} \frak m_{k,\beta}^t(x_1,\ldots,x_k) \\
&+ \sum_{k_1+k_2=k}\sum_{\beta_1+\beta_2=\beta}\sum_{i=1}^{k-k_2+1}
(-1)^{*}\frak c^t_{k_1,\beta_1}(x_1,\ldots, \frak m_{k_2,\beta_2}^t(x_i,\ldots),\ldots,x_k) \\
&- \sum_{k_1+k_2=k}\sum_{\beta_1+\beta_2=\beta}\sum_{i=1}^{k-k_2+1}
\frak m^t_{k_1,\beta_1}(x_1,\ldots, \frak c_{k_2,\beta_2}^t(x_i,\ldots),\ldots,x_k)\\
&=0.
\endaligned
\end{equation}
Here $* = \deg' x_1 + \ldots + \deg'x_{i-1}$.
\item 
$\frak m_{k,(0,0)}^t$  is independent of $t$. $\frak c_{k,(0,0)}^t = 0$.
\end{enumerate}
\begin{dfn}
\begin{enumerate}
\item
$(C,\langle \cdot \rangle,\{\frak m_{k,\beta}\},\{\frak m_{-1,\beta}\})$
is said to be an {\it inhomogeneous cyclic filtered $A_{\infty}$ algebra} if 
$(C,\langle \cdot \rangle,\{\frak m_{k,\beta}\})$ is 
cyclic filtered $A_{\infty}$ algebra and $\frak m_{-1,\beta} \in \R$.
\item
$(C,\langle \cdot \rangle,\{\frak m^t_{k,\beta}\},\{\frak c^t_{k,\beta}\},\{\frak m^t_{-1,\beta}\})$
is said to be a {\it pseudo-isotopy of inhomogeneous cyclic filtered $A_{\infty}$ algebra} 
if $(C,\langle \cdot \rangle,\{\frak m^t_{k,\beta}\},\{\frak c^t_{k,\beta}\})$ is 
a pseudo-isotopy of cyclic filtered $A_{\infty}$ algebra, 
$$
t \mapsto \frak m^t_{-1,\beta}
$$
is a real valued smooth function and if
\begin{equation}\label{dervativemt}
\frac{d}{dt}\frak m^t_{-1,\beta} 
+ \sum_{\beta_1+\beta_2=\beta}\langle \frak c^t_{0,\beta_1}(1),
\frak m^t_{0,\beta_2}(1)\rangle = 0.
\end{equation}
\end{enumerate}
\end{dfn}
Let $(C,\langle \cdot \rangle,\{\frak m^t_{k,\beta}\},\{\frak c^t_{k,\beta}\})$ be 
a pseudo-isotopy of cyclic filtered $A_{\infty}$ algebra.
We consider cyclic filtered $A_{\infty}$ algebras $(C,\langle \cdot \rangle,\{\frak m^0_{k,\beta}\})$
and $(C,\langle \cdot \rangle,\{\frak m^1_{k,\beta}\})$.
By  \cite{F1} Theorem 8.2 there exists an 
isomorphism
\begin{equation}
\frak c = \frak c(1;0) : (C,\langle \cdot \rangle,\{\frak m^0_{k,\beta}\}) 
\to (C,\langle \cdot \rangle,\{\frak m^1_{k,\beta}\})
\end{equation}
of cyclic filtered $A_{\infty}$ algebra.
It induces
$$
\frak c_* : \mathcal M (C,\{\frak m^0_{k,\beta}\}) 
\to \mathcal M (C,\{\frak m^1_{k,\beta}\})
$$
by \cite{FOOO080} Theorem 4.3.22.
The main result of this section is as follows.
\begin{thm}\label{pseudoisowelldef}
We have
\begin{equation}
\Psi'(\frak c_*(b)) + \sum_{\beta}T^{E(\beta)}\frak m^1_{-1,\beta}
= \Psi'(b) + \sum_{\beta}T^{E(\beta)}\frak m^0_{-1,\beta}.
\end{equation}
\end{thm}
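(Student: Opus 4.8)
The plan is to interpolate along the pseudo-isotopy. For each $t\in[0,1]$ let $\Psi'_t:C^1_+\to\Lambda_0$ be the superpotential $(\ref{superpoteprime})$ built from $\{\frak m^t_{k,\beta}\}$, and put
\begin{equation*}
\Phi_t(b)=\Psi'_t(b)+\sum_{\beta}T^{E(\beta)}\frak m^t_{-1,\beta}.
\end{equation*}
Given $b_0\in\widetilde{\mathcal M}(C;\{\frak m^0_{k,\beta}\};\Lambda_+)$, apply the isomorphism $\frak c(t;0)$ of \cite{F1} Theorem 8.2 and the induced transport of \cite{FOOO080} Theorem 4.3.22 to obtain a smooth path $t\mapsto b(t):=\frak c(t;0)_*(b_0)$; concretely $b(t)$ is the solution of the $A_\infty$ gauge equation $\frac{d}{dt}b(t)+\sum_{k,\beta}T^{E(\beta)}\frak c^t_{k,\beta}(b(t),\dots,b(t))=0$, but only the facts that $b(0)=b_0$, that $b(1)$ represents $\frak c_*([b_0])$, and that $b(t)\in\widetilde{\mathcal M}(C;\{\frak m^t_{k,\beta}\};\Lambda_+)$ for every $t$ will be used. (All series converge and may be differentiated termwise in the $T$-adic topology, since $G$ is discrete and only finitely many $\beta$ contribute below a given energy.) I claim $\Phi_t(b(t))$ is independent of $t$. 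Since both sides of the asserted identity descend to functions on gauge-equivalence classes by Proposition~\ref{gginvariance}, it suffices to prove the identity for one representative $b_0$; and evaluating $\Phi_t(b(t))$ at $t=0$ and $t=1$ is exactly that identity.

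To prove the claim, differentiate. Writing $\partial_t$ for differentiation of the explicit $t$-dependence through $\{\frak m^t_{k,\beta}\}$, so that $(\partial_t\Psi'_t)(b)=\sum_{k,\beta}\frac{T^{E(\beta)}}{k+1}\langle\frac{\partial}{\partial t}\frak m^t_{k,\beta}(b,\dots,b),b\rangle$, we get
\begin{equation*}
\frac{d}{dt}\Phi_t(b(t))=(\partial_t\Psi'_t)(b(t))+d\Psi'_t|_{b(t)}\Big(\tfrac{d}{dt}b(t)\Big)+\frac{d}{dt}\sum_{\beta}T^{E(\beta)}\frak m^t_{-1,\beta}.
\end{equation*}
The middle term vanishes: by the cyclic-symmetry computation behind Proposition~\ref{prp21} (compare the proof of Proposition~\ref{gginvariance}) one has $d\Psi'_t|_b(\xi)=\sum_{k,\beta}T^{E(\beta)}\langle\frak m^t_{k,\beta}(b,\dots,b),\xi\rangle$, which is the pairing of $\xi$ against the left-hand side of the Maurer--Cartan equation $(\ref{MCeq})$ for $\{\frak m^t\}$, hence $0$ because $b(t)\in\widetilde{\mathcal M}(C;\{\frak m^t_{k,\beta}\};\Lambda_+)$. (This is precisely why the explicit form of $\frac{d}{dt}b(t)$ is not needed.) By $(\ref{dervativemt})$ the last term equals $-\sum_{\beta_1,\beta_2}T^{E(\beta_1)+E(\beta_2)}\langle\frak c^t_{0,\beta_1}(1),\frak m^t_{0,\beta_2}(1)\rangle$, so it remains to show $(\partial_t\Psi'_t)(b(t))=\sum_{\beta_1,\beta_2}T^{E(\beta_1)+E(\beta_2)}\langle\frak c^t_{0,\beta_1}(1),\frak m^t_{0,\beta_2}(1)\rangle$.

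For this I substitute the pseudo-isotopy equation $(\ref{isotopymaineq})$ for $\frac{\partial}{\partial t}\frak m^t_{k,\beta}$ with all inputs equal to $b=b(t)$. This produces two families of terms, of shapes $\langle\frak m^t(\dots,\frak c^t(\dots),\dots),b\rangle$ and $\langle\frak c^t(\dots,\frak m^t(\dots),\dots),b\rangle$, all remaining entries being copies of $b$. In the first family, summing over the position of the $\frak c^t$-insertion and over the surrounding copies of $b$, and using cyclic symmetry of $\langle\frak m_{k,\beta}(\cdot),\cdot\rangle$ to rotate the inserted output around the circle, the terms reassemble into pairings against $\sum\frak m^t(e^{b(t)})=0$, except for the contribution in which the insertion is the constant operation $\frak m^t_{0,\beta_2}(1)$. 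In the second family, summing the inner $\frak m^t_{\bullet,\beta_2}$ over all arities and classes rebuilds $\sum\frak m^t(e^{b(t)})=0$, again except for the $\frak m^t_{0,\beta_2}(1)$ piece, which escapes only when the outer operation is $\frak c^t_{0,\beta_1}$. Collecting the residual terms and using the cyclic symmetry $(\ref{inhomomainformula})$ of $\{\frak c^t_{k,\beta}\}$ to fix signs leaves exactly $\sum_{\beta_1,\beta_2}T^{E(\beta_1)+E(\beta_2)}\langle\frak c^t_{0,\beta_1}(1),\frak m^t_{0,\beta_2}(1)\rangle$, as required; combining the three displayed contributions gives $\frac{d}{dt}\Phi_t(b(t))=0$ and hence the theorem. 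The main obstacle is exactly this last bookkeeping: isolating the ``curvature'' terms involving $\frak m^t_0$ and $\frak c^t_0$ from the mass of terms that telescope against the Maurer--Cartan relation, and verifying that the weight $1/(k+1)$ matches the sums over insertion positions. This is the cyclic, curved analogue of the classical fact that the potential is invariant under a gauge transformation, and the cyclic symmetry axioms are what make the cancellations close up.
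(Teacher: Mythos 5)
Your proposal is correct and follows essentially the same route as the paper: transport $b$ along the pseudo-isotopy via $\frak c(t;0)_*$, differentiate $\Psi'_t(b(t))+\sum_\beta T^{E(\beta)}\frak m^t_{-1,\beta}$, kill the $\frac{db}{dt}$-terms by cyclic symmetry plus the Maurer--Cartan equation, substitute (\ref{isotopymaineq}) for $\partial_t\frak m^t$, and observe that after cyclic rearrangement everything telescopes against the Maurer--Cartan relation except the residual $\sum\langle\frak c^t_{0,\beta_1}(1),\frak m^t_{0,\beta_2}(1)\rangle$, which cancels the derivative of the inhomogeneous term by (\ref{dervativemt}). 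The bookkeeping you flag as the main obstacle (positions of insertion versus the weight $1/(k+1)$) is exactly the computation carried out in the paper's equations (\ref{mondainotasizan})--(\ref{kekkachikasi}), and it closes up as you describe.
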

\begin{proof}
We also constructed 
$$
\frak c(t;0) : (C,\langle \cdot \rangle,\{\frak m^0_{k,\beta}\}) 
\to (C,\langle \cdot \rangle,\{\frak m^t_{k,\beta}\})
$$
in \cite{F1} Definition 9.4. It is an isomorphism and depends smoothly on $t$.
We put 
$$
b(t) = \frak c(t;0)_*(b) = \sum_{k,\beta}  \frak c_{k,\beta}(t;0)(b,\ldots,b)
$$
and
\begin{equation}\label{deff(t)}
\aligned
f(t) &= \Psi'(b(t)) + \sum_{\beta}T^{E(\beta)}\frak m^t_{-1,\beta} \\
&= \sum_{k=0}^{\infty}\frac{1}{k+1}
\langle \frak m^t_{k}(b(t),\ldots,b(t)),b(t)\rangle 
+ \sum_{\beta}T^{E(\beta)}\frak m^t_{-1,\beta}.\endaligned
\end{equation}
We calculate the derivative of $f(t)$. The 
derivative of the first term is:
\begin{equation}\label{keisan1kou}
\aligned
&\sum_{k=0}^{\infty}\frac{1}{k+1}
\left\langle \frac{d\frak m^t_{k}}{dt}(b(t),\ldots,b(t)),b(t)\right\rangle \\
&+\sum _{k=0}^{\infty}\frac{1}{k+1}
\left\langle \frak m^t_{k}(b(t),\ldots,\frac{db(t)}{dt},\ldots,b(t)),b(t)\right\rangle \\
&+\sum _{k=0}^{\infty}\frac{1}{k+1}\left\langle \frak m^t_{k}(b(t),\ldots,b(t)),\frac{db(t)}{dt}\right\rangle
\endaligned
\end{equation}
The sum of 2nd and the 3rd terms of (\ref{keisan1kou}) is:
$$
\sum _{k=0}^{\infty}\left\langle \frak m^t_{k}(b(t),\ldots,b(t)),\frac{db(t)}{dt}\right\rangle
= 0
$$
by cyclic symmetry and Maurer-Cartan equation of $b(t)$.
\par
We calculate the 1st term by using (\ref{isotopymaineq}) and obtain:
\begin{equation}\label{mondainotasizan}
\aligned
&-\sum_{k=0}^{\infty}\sum_{k_1+k_2=k+1}\sum_{i=0}^{k_1-1}
\frac{1}{k+1}\langle \frak c^t_{k_1}(\underbrace{b(t),\ldots,b(t)}_{i},
\frak m^t_{k_2}(b(t),\ldots),\ldots),b(t)
\\
&+\sum_{k=0}^{\infty}\sum_{k_1+k_2=k+1}\sum_{i=0}^{k_2-1}
\frac{1}{k+1}\langle \frak m^t_{k_2}(\underbrace{b(t),\ldots,b(t)}_{i},
\frak c^t_{k_1}(b(t),\ldots),\ldots),b(t)
\rangle
\endaligned
\end{equation}
We have
$$
\langle \frak c_{k_1}(\ldots
\frak m^t_{k_2}(b(t),\ldots),\ldots),b(t)
\rangle
=\langle \frak c^t_{k_1}(b(t),\ldots),
\frak m^t_{k_2}(b(t),\ldots)
\rangle 
$$
and
$$\aligned
\langle \frak m_{k_2}(\ldots
\frak c^t_{k_1}(b(t),\ldots),\ldots),b(t)
\rangle
&= \langle \frak m^t_{k_2}(b(t),\ldots),
\frak c^t_{k_1}(b(t),\ldots)
\rangle 
\\
&=-\langle \frak c^t_{k_2}(b(t),\ldots),
\frak m^t_{k_1}(b(t),\ldots)
\rangle 
\endaligned$$
by cyclic symmetry and \cite{F1} (56).
Therefore (\ref{mondainotasizan}) is equal to
\begin{equation}\label{kekkachikasi}
-\sum_{k=0}^{\infty}\sum_{k_1+k_2=k+1}
\langle \frak c^t_{k_2}(b(t),\ldots),
\frak m^t_{k_1}(b(t),\ldots)\rangle.
\end{equation}
Using Maurer-Cartan equation for $b(t)$ we find that 
(\ref{kekkachikasi}) is equal to
$$
\langle \frak c^t(0),\frak m^t(0)\rangle.
$$
By (\ref{dervativemt}) this cancels with the derivative of the 2nd term of 
(\ref{deff(t)}). Namely $f(t)$ is independent of $t$.
\end{proof}
\begin{dfn}
Let $(C,\langle \cdot \rangle,\{\frak m_{k,\beta}\},\{\frak m_{-1,\beta}\})$ be 
an inhomogeneous cyclic filtered $A_{\infty}$ algebra.
We call the function $\Psi : \mathcal M(C;\Lambda_+) \to \Lambda_+$,  
defined by
$$
\Psi(b) = \Psi'(b) + \sum_{\beta}T^{E(\beta)}\frak m^0_{-1,\beta}
$$
its
{\it superpotential}.
\end{dfn}
\section{Geometric realization}
\label{geomsec}
Let $M$ be a $3 \times 2$ dimensional symplectic manifold with 
$c^1(M) = 0$ and $L$ its relatively spin Lagrangian 
submanifold with vanishing Maslov index.
\par
In \cite{F1} Theorem 1.1, we defined a $G$-gapped cyclic filtered $A_{\infty}$ 
algebra $(\Lambda(L),\langle \cdot \rangle,\{\frak m_{k,\beta}^J\})$ 
on its de Rham complex. We also proved that its psedo-isotpy type 
is independent of the choice of $J$, perturbation etc.
The main result of this section is as follows.
\begin{thm}\label{inhomoexistmain}
If $J$ satisfies Assumption \ref{assumpJ}, then there exists 
$\frak m_{-1,\beta}^J \in \R$ such that 
$(\Lambda(L),\langle \cdot \rangle,\{\frak m_{k,\beta}^J\},\{\frak m_{-1,\beta}^J \})$
is an inhomogeneous cyclic and gapped filetered $A_{\infty}$ algebra.
\par
Moreover the pseudo-isotopy type of it depends only on $M,L,J$ and 
is independent on other choices involved in the definition.
\end{thm}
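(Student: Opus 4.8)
The plan is to take $\frak m^J_{-1,\beta}$ to be the virtual count of $\mathcal M_0(\beta;J)$, the moduli space of $J$-holomorphic discs of class $\beta$ with no marked point, and to extend the pseudo-isotopy produced in \cite{F1} to a pseudo-isotopy of \emph{inhomogeneous} cyclic filtered $A_\infty$ algebras by running the same count over the parameter $t$ and verifying the compatibility relation (\ref{dervativemt}).

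For the first assertion: when $E(\beta)>0$ the space $\mathcal M_0(\beta;J)$ has virtual dimension $\mu_L(\beta)+\dim L-3=0$ (as $\dim L=3$ and $\mu_L\equiv 0$), and I would equip it with a Kuranishi structure compatible, through the marked-point-forgetting maps, with those used for $\mathcal M_{k+1}(\beta;J)$ in \cite{F1}. It is compact by Gromov compactness, and its only possible codimension-one strata come from a disc breaking into two discs of classes $\beta_1+\beta_2=\beta$ meeting at a boundary node, which carry virtual dimension $-1$, or from sphere bubbling, which is of still lower virtual dimension; the sphere bubbles touching $L$ are exactly the configurations excluded by Assumption \ref{assumpJ}. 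Hence the perturbation may be chosen so that $\mathcal M_0(\beta;J)$ becomes a compact, oriented, $0$-dimensional space without boundary, the orientation coming from the relatively spin structure; its count $\frak m^J_{-1,\beta}\in\Q\subset\R$ is then well defined, and it is $G$-gapped since the relevant classes $\beta$ are those of \cite{F1}. Combined with \cite{F1} Theorem 1.1 this proves (1).

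For the independence statement: given two choices of auxiliary data for the fixed $J$, \cite{F1} produces a pseudo-isotopy $(\Lambda(L),\langle\cdot\rangle,\{\frak m^t_{k,\beta}\},\{\frak c^t_{k,\beta}\})$ from a family, over $t\in[0,1]$, of moduli spaces with a continuous family of multisections, and since $J$ is fixed Assumption \ref{assumpJ} holds for every $t$. I would adjoin to this family the corresponding family $\{\mathcal M_0(\beta;t)\}_{t\in[0,1]}$ and set $\frak m^t_{-1,\beta}$ to be its fibrewise count, a smooth function of $t$ with the continuous-family perturbations of \cite{F1}, which at $t=0,1$ equals $\frak m^J_{-1,\beta}$ for the respective choice. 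The relation (\ref{dervativemt}) is then the Stokes identity for fibrewise integration over this $1$-dimensional family: its fibrewise boundary is the disc-breaking locus $\mathcal M_1(\beta_1;\cdot)\times_L\mathcal M_1(\beta_2;\cdot)$, and — as in \cite{F1} — identifying the $t$-direction part of the evaluation pushforward $ev_*[\mathcal M_1(\beta_1;\cdot)]$ with $\frak c^t_{0,\beta_1}(1)\in\Lambda(L)$ of degree $1$ and the fibre part of $ev_*[\mathcal M_1(\beta_2;\cdot)]$ with $\frak m^t_{0,\beta_2}(1)\in\Lambda(L)$ of degree $2$, the Poincar\'e pairing $\langle\frak c^t_{0,\beta_1}(1),\frak m^t_{0,\beta_2}(1)\rangle$ is the contribution of that stratum to $\frac{d}{dt}\frak m^t_{-1,\beta}$. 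Fixing signs so the two sides agree gives (\ref{dervativemt}), hence the pseudo-isotopy of inhomogeneous cyclic filtered $A_\infty$ algebras; replacing the family of perturbations by a path $\mathcal J$ of almost complex structures lying inside the locus of Assumption \ref{assumpJ} gives the same conclusion there, proving independence of $J$ within such a locus.

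I expect the main difficulty to be geometric rather than algebraic: arranging the Kuranishi structures on $\mathcal M_0(\beta;J)$ and on its family over $[0,1]$ so that the fibrewise boundary is identified, on the nose and compatibly with the cyclic structure and with the $\frak c^t$ of \cite{F1}, with the disc-disc fibre products, and keeping track of orientations in the presence of automorphisms and of the relatively spin structure. Once this input is in place the rest is the boundary computation above, which is the geometric counterpart of the algebraic computation in the proof of Theorem \ref{pseudoisowelldef}.
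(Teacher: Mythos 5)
Your geometric mechanism is exactly the paper's: take $\frak m^J_{-1,\beta}$ to be the virtual count of $\mathcal M_0(\beta;J)$ (virtual dimension $\dim L-3+\mu_L(\beta)=0$), note that Assumption \ref{assumpJ} removes the sphere-bubble stratum of (\ref{genedecompdsi0}) from $\partial\mathcal M_0(\beta;J)$, and prove (\ref{dervativemt}) by running the same correspondence over a $[0,1]$-family and applying Stokes' formula to $ev_t^*(dt)$, the disc-breaking boundary (\ref{partM0betadecomp}) contributing precisely $\langle\frak c^t_{0,\beta_1}(1),\frak m^t_{0,\beta_2}(1)\rangle$. One caveat on your first step: the claim that one may perturb $\mathcal M_0(\beta;J)$ into a compact boundaryless $0$-manifold with count in $\Q$ is not available in the framework you actually need. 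To make the Stokes argument and the compatibilities with $\{\frak m_{k,\beta}\}$, $\{\frak c^t_{k,\beta}\}$ work, the perturbation of $\mathcal M_0(\beta;J)$ must be the same continuous family of multisections (compatible with the forgetful maps, cyclic symmetry and the boundary identifications) used in \cite{F1}; its zero set is positive-dimensional, the definition is $\frak m^{J,\frak s}_{-1,\beta}=\text{\rm Corr}(\mathcal M_{0}^J(\beta);(\text{\rm tri},\text{\rm tri}))(1)\in\R$, and this is exactly why the paper obtains only a real number and leaves rationality as Conjecture \ref{conj1}. This does not affect the statement (which only asserts $\frak m^J_{-1,\beta}\in\R$), but your stronger assertion is unjustified as written.

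The genuine gap is the energy filtration. The continuous families of multisections, hence $\frak m^{J,\frak s}_{k,\beta}$, $\frak m^{J,\frak s}_{-1,\beta}$ and the $t$-parametrized versions, are constructed only for $E(\beta)<E_0$, i.e.\ one gets an inhomogeneous cyclic structure and pseudo-isotopies only modulo $T^{E_0}$; your proposal treats the construction as if it were carried out for all $\beta$ simultaneously. To produce the actual $G$-gapped inhomogeneous cyclic filtered $A_\infty$ algebra and to prove the ``moreover'' clause, the paper must interpolate between the truncations: this is Lemma \ref{pisoextlem} (the inhomogeneous analogue of \cite{F1} Theorem 8.1), which extends a mod $T^{E_0}$ structure and pseudo-isotopy to mod $T^{E_1}$ by an explicit formula of the type $\frak m^t_{-1,\beta}=\frak m^1_{-1,\beta}+\sum_{\beta_1+\beta_2=\beta}\int_t^1\langle\frak c^t_{0,\beta_1}(1),\frak m^t_{0,\beta_2}(1)\rangle\,dt$ at the new energy level, followed by the inductive scheme of \cite{F1} Section 12 and \cite{FOOO080} Section 7.2, and finally a comparison argument (the paper bypasses the inhomogeneous pseudo-isotopy of pseudo-isotopies and argues directly via (\ref{410})--(\ref{412})) showing the result is independent of the choices made at each stage. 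Without this step neither the existence of the full (non-truncated) structure nor the well-definedness of its pseudo-isotopy type is established; everything else in your outline matches the paper's proof of Proposition \ref{pseudohomoinvmodT}.
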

\begin{proof}
For $\beta \in H_2(M,L;\Z)$ let $\mathcal M_k(\beta;J)$ be the 
moduli space of stable $J$ holomorphic maps $v : (\Sigma,\partial \Sigma) \to 
(M,L)$ from bordered Riamann surface $\Sigma$ 
of genus 0 with connected nonempty boundary $\partial \Sigma$, and with 
$k$ boundary marked points, such that 
$v$ is of homology class $\beta$.
Let $ev = (ev_0,\ldots,ev_{k-1}) : \mathcal M_k(\beta;J) \to L^k$ be the 
evaluation maps at the boundary marked points.
(See \cite{FOOO080} Subsection 2.1.1.)
\par
In \cite{F1} Theorem 3.1 and Corollary 3.1, we proved an 
existence of its Kuranishi structure with the following properties:
\par\medskip
\begin{enumerate}
\item It is compatible with the forgetful map
\begin{equation}\label{forgetmap}
\mathfrak{forget}_{k,0} 
: \mathcal M_k(\beta;J) \to \mathcal M_0(\beta;J).
\end{equation}
(See \cite{F1} Section 3 for the precise definition of 
this compatibility.)
\item
For $k\ge 1$ the evaluation map 
$ev_0 : \mathcal M_k(\beta;J) \to L$ is weakly submersive, 
in the sense of \cite{FOOO080} Definition A1.13.
\item It is invariant under the cyclic permutation of the 
boundary marked points.
\item We consider the decomposition of the boundary:
\begin{equation}\label{bdcompati0}
\aligned
\partial \mathcal M_{k+1}(\beta)
=
\bigcup_{1\le i\le j+1 \le k+1} 
&\bigcup_{\beta_1+\beta_2=\beta}
 \\
&\mathcal M_{j-i+1}(\beta_1) {}_{ev_0} \times_{ev_i} 
\mathcal M_{k-j+i}(\beta_2).
\endaligned
\end{equation}
(See {\rm \cite{FOOO080}} Subsection {\rm 7.1.1.}) Then the restriction of the Kuranishi structure of 
$\mathcal M_{k+1}(\beta)$ to the left hand side 
coincides with the fiber product Kuranishi structure in 
the right hand side.
\item
We consider the decomposition 
\begin{equation}\label{partM0betadecomp}
\partial\mathcal M_{0}(\beta) = \bigcup_{\beta_1+\beta_2=\beta}
\left(\mathcal M_{1}(\beta_1)\, {}_{ev_0} \times_{ev_0} \mathcal M_{1}(\beta_2)
\right)/\Z_2.
\end{equation}
Then,
the fiber 
product Kuranishi structure on 
$\mathcal M_{1}(\beta_1)\, {}_{ev_0} \times_{ev_0} \mathcal M_{1}(\beta_2)$
(which is well-defined by $2$) coincides with the pull back of the Kuranishi structure to
$\partial\mathcal M_{0}(\beta)$.
\end{enumerate}
\par\smallskip
We remark that in general the decomposition of the boundary of $\partial\mathcal M_{0}(\beta)$
is given by
\begin{equation}\label{genedecompdsi0}
\aligned
\partial\mathcal M_{0}(\beta) = &\bigcup_{\beta_1+\beta_2=\beta}
\left(\mathcal M_{1}(\beta_1)\, {}_{ev_0} \times_{ev_0} \mathcal M_{1}(\beta_2)\right)/\Z_2 \\
&\quad\cup \bigcup_{\tilde \beta} \mathcal M^{\text{\rm cl}}_{1}(\tilde{\beta}) \,{}_{ev^{\text{\rm int}}_0}\times_M L.
\endaligned\end{equation}
Here $\mathcal M^{\text{\rm cl}}_{1}(\tilde{\beta})$ is the moduli space of stable 
maps of genus zero without boudary, one marked point and of homology class $\tilde{\beta} \in H_2(M;\Z)$.
The sum is taken over all $\tilde{\beta} \in H_2(M;\Z)$ which goes to 
$\beta$ by $i_* : H_2(M;\Z) \to H_2(M;L;\Z)$.
By Assumption \ref{assumpJ} the 2nd term of the right hand side of 
(\ref{genedecompdsi0}) is an empty set.
\par
Let $E_0>0$. Then in \cite{F1} Theorem 5.1 and Corollary 5.1, we proved the 
existence of system of continuous families of multisections 
on the above Kuranishi spaces $\mathcal M_k(\beta;J)$ with $\beta\cap[\omega] < E_0$
with the following properties: 
\par\medskip
\begin{enumerate}
\item The families of multisections are transversal to $0$.
\item It is compatible with the forgetful map (\ref{forgetmap}).
(See \cite{F1} Section 5 for the precise definition of this compatibility.)
\item For $k \ge 1$ the evaluation map $ev_0$ induces a submersion of its zero 
set, in the sense of \cite{F1} Definition 4.1.4. 
\item It is invariant under the cyclic permutation of the 
boundary marked points.
\item It is compatible with the identification (\ref{bdcompati0}).
\item It is compatible with the identification (\ref{partM0betadecomp}).
\end{enumerate}
\par\smallskip
Let $\rho_i \in \Lambda(L)$ ($i=1,\ldots,k$) be the differential forms on $L$.
In \cite{F1} Section 6 we defined
\begin{equation}\label{mkdefinition}
\frak m_{k,\beta}^{J,\frak s}(\rho_1,\ldots,\rho_k)
= \text{\rm Corr}(\mathcal M_{k+1}(\beta;J);((ev_1,\ldots,ev_k),ev_0))
(\rho_1\times\ldots\times\rho_k).
\end{equation}
Here the right hand side is the smooth correspondence associated to the 
above continuous family of perturbations. (See \cite{F1} Section 4.)
(Note that (\ref{mkdefinition}) depends on the choice of family of multisections.
The symbol $\frak s$ is put to clarify this dependence.)
\par
We next define $\frak m_{-1,\beta}^{J,\frak s}$.
Let $\text{\rm pt}$ be the space consisting of one point.
We have an obvious map $\text{\rm tri} : \mathcal M_{0}(\beta;J) \to \text{\rm pt}$.
Note $\Lambda(\text{\rm pt}) = \R$. Moreover
$$
\dim \mathcal M_{0}(\beta;J)  = \dim L -3 +\mu(\beta) =0.
$$
Therefore we have an $\R$ linear map:
$$
\text{\rm Corr}(\mathcal M_{0}^J(\beta);(\text{\rm tri},\text{\rm tri})) : \R \to \R.
$$
\begin{dfn}
For $\beta\cap [\omega] < E_0$, we put
$$
\frak m_{-1,\beta}^{J,\frak s}
= \text{\rm Corr}(\mathcal M_{0}^J(\beta);(\text{\rm tri},\text{\rm tri}))(1) \in \R.
$$
\end{dfn}
\begin{dfn}
\begin{enumerate}
\item An {\it inhomogeneous cyclic filtered $A_{\infty}$ algebra modulo $T^{E_0}$} 
is $(C,\langle \cdot \rangle,\{\frak m_{k,\beta}\mid  E(\beta) < E_0\},\{\frak m_{-1,\beta}^J \mid  E(\beta) < E_0\})$
such that $(C,\langle \cdot \rangle,\{\frak m_{k,\beta}\mid  E(\beta) < E_0\})$ is 
a cyclic filtered $A_{\infty}$ algebra modulo $T^{E_0}$ and $\frak m_{-1,\beta}^J \in \R$.
\item A {\it pseudo-isotopy of inhomogeneous cyclic filtered $A_{\infty}$ algebra modulo $T^{E_0}$} 
is 
$(C,\langle \cdot \rangle,\{\frak m_{k,\beta}\mid  E(\beta) < E_0\},\{\frak c_{k,\beta}\mid  E(\beta) < E_0\},\{\frak m_{-1,\beta} \mid  E(\beta) < E_0\})$
,  if $(C,\langle \cdot \rangle,\{\frak m_{k,\beta}\mid  E(\beta) < E_0\},\{\frak c_{k,\beta}\mid  E(\beta) < E_0\})$
is a pseudo-isotopy of cyclic filtered $A_{\infty}$ algebra modulo $T^{E_0}$
 (namely (\ref{inhomomainformula})
(\ref{isotopymaineq}) hold for $E(\beta) < E_0$) and   (\ref{dervativemt}) holds for $E(\beta) < E_0$. 
\end{enumerate}
\end{dfn}
The modulo $T^{E_0}$ version of Proposition \ref{gginvariance} and Theorem \ref{pseudoisowelldef}
can be proved by the same proof.
\par
$(\Lambda(L),\langle \cdot \rangle,\{\frak m_{k,\beta}^{J,\frak s}\},\{\frak m_{-1,\beta}^{J,\frak s} \})$ 
which we defined above 
is an  inhomogeneous cyclic filtered $A_{\infty}$ algebra modulo $T^{E_0}$.
\begin{prp}\label{pseudohomoinvmodT}
$(\Lambda(L),\langle \cdot \rangle,\{\frak m_{k,\beta}^{J,\frak s}\},\{\frak m_{-1,\beta}^{J,\frak s} \})$ 
is independent of the choice of Kuranishi structure and family of multisections $\frak s$ satisfying the 
properties listed in this section, up to pseudo-isotopy of inhomogeneous cyclic filtered $A_{\infty}$ algebra modulo $T^{E_0}$.
\end{prp}
\begin{proof}
Let us take two different choices of system of Kuranishi structures and of families of multisections.
We consider 
$
[0,1]\times \mathcal M_k(\beta;J)
$
and evaluation maps
$$
ev=(ev_0,\ldots,ev_{k-1}) : [0,1]\times \mathcal M_k(\beta;J) \to L^k, 
\quad ev_t :  [0,1]\times \mathcal M_k(\beta;J) \to [0,1].
$$
As in \cite{F1} Section 11 Lemmas 11.1, 11.2, we have a system of Kuranishi structures 
and continuous families of multisections on 
$[0,1]\times \mathcal M_k(\beta;J)$ with the following properties:
\par\medskip
\begin{enumerate}
\item The families of multisections are transversal to $0$.
\item It is compatible with the forgetful map $[0,1]\times$ (\ref{forgetmap}).
\item For $k \ge 1$ the evaluation map 
$$
(ev_t,ev_0) : [0,1]\times \mathcal M_k(\beta;J) \to [0,1] \times L
$$
is weakly submersive and
 induces a submersion of the zero 
set of family of multisections, in the sense of \cite{F1} Definition 4.1.4. 
\item They are invariant under the cyclic permutation of the 
boundary marked points.
\item It is compatible with the identification (\ref{bdcompati0}).
\item It is compatible with the identification (\ref{partM0betadecomp}).
\item 
$$
ev_t : [0,1]\times\mathcal M_0(\beta) \to [0,1]
$$
is  weakly submersive and
 induces a submersion on the zero 
set of family of multisections, in the sense of \cite{F1} Definition 4.1.4. 
\item 
At $t_0=0,1$ the induced Kuranishi structure and families of multisecitons on 
$\{t_0\}\times\mathcal M_k(\beta)$ coincides with  
given two choices of Kuranishi structures and of families of multisections.
\end{enumerate}
\par\smallskip
In \cite{F1} Section 11, we defined a pseudo-isotopy of cyclic filtered 
$A_{\infty}$ algebra as follows. Let $\rho_1,\ldots,\rho_k 
\in \Lambda(L)$. 
We put 
\begin{equation}\label{46}
\aligned
&\text{\rm Corr}_*([0,1]\times\mathcal M_{k+1}(\beta;J);(ev_{1},\ldots,ev_k),(ev_t,ev_0))
(\rho_1\times\ldots\times\rho_k) \\
&=
\rho(t) + dt \wedge \sigma(t),
\endaligned\end{equation}
and define
\begin{equation}\label{pseudoisoshiki}
\frak m^t_{k,\beta}(\rho_1,\ldots,\rho_k) = \rho(t),  \qquad
\frak c^t_{k,\beta}(\rho_1,\ldots,\rho_k) = \sigma(t).
\end{equation}
\par
We next define $\frak m_{-1,\beta}^t$ .
Let $\text{\rm tri} : [0,1]\times\mathcal M_{0}(\beta;J) \to \text{\rm pt}$
be an obvious map to a point. We take $1 \in \Lambda^0(\text{\rm pt}) = \R$ and
put
\begin{equation}
\text{\rm Corr}_*([0,1]\times\mathcal M_{0}(\beta;J);\text{\rm tri},ev_t)(1)
=
\rho(t) + dt \wedge \sigma(t).
\end{equation}
We then define 
\begin{equation}
\frak m^t_{-1,\beta} = \rho(t).
\end{equation}
\begin{lmm}
$(\Lambda(L),\langle \cdot \rangle,\{\frak m_{k,\beta}^{t}\},\{\frak c_{k,\beta}^{t}\},
\{\frak m_{-1,\beta}^{t} \})$  above defines a 
pseudo-isotopy of inhomogeneous cyclic filtered $A_{\infty}$
algebra modulo $T^{E_0}$.
\end{lmm}
\begin{proof}
In \cite{F1} Section 11 it is proved that 
$(\Lambda(L),\langle \cdot \rangle,\{\frak m_{k,\beta}^{t}\},\{\frak c_{k,\beta}^{t}\})$
is a pseudo-isotopy of cyclic filtered $A_{\infty}$
algebra modulo $T^{E_0}$.  Therefore it suffices to check (\ref{dervativemt}).
\par
Let $0 \le t_1 < t_2 \le 1$.  We have:
$$\aligned
&\partial \left(
 [t_1,t_2]\times\mathcal M_{0}(\beta;J)
\right) \\
&= \left(
\{t_1,t_2\}\times\mathcal M_{0}(\beta;J) 
\right)\\
&\cup 
\bigcup_{\beta_1+\beta_2 = \beta}
\left(
 ([t_1,t_2]\times \mathcal M_1(\beta_1))\, {}_{(ev_0,ev_t)}\times_{(ev_0,ev_t)}\,
  ([t_1,t_2]\times \mathcal M_1(\beta_2))
\right)/\Z_2.
\endaligned$$
We now apply Stokes' formula (\cite{F1} Proposition 4.2) to 
the closed 1 form $ev^*_t(dt)$ on the zero set of multisections on 
$[t_1,t_2]\times\mathcal M_{0}(\beta;J)$
and obtain:
$$
\frak m_{-1,\beta}^{t_2} - \frak m_{-1,\beta}^{t_1}
= \sum_{\beta_1+\beta_2 = \beta}
\int_{t_1}^{t_2} \langle \frak c_{0,\beta_1}^t(1), 
\frak m_{0,\beta_2}^t(1) \rangle dt
$$
By taking $t_2$ derivative we obtain (\ref{dervativemt}).
\end{proof}
The proof of Proposition \ref{pseudohomoinvmodT} is now complete.
\end{proof}
We thus proved mod $T^{E_0}$ version  of Theorem \ref{inhomoexistmain}.
We next prove the following inhomogeneous version of Theorem 8.1 \cite{F1}.
\begin{lmm}\label{pisoextlem}
Let $0 < E_0 < E_1$ and 
$(C,\langle\cdot\rangle,\{\frak m_{k,\beta}^{i}\},\{\frak m_{-1,\beta}^{i}\})$
be $G$-gapped inhomogeneous cyclic filtered $A_{\infty}$ algebra modulo $T^{E_i}$, 
for $i=0,1$.
Let $(C,\langle\cdot\rangle,\{\frak m_{k,\beta}^{t}\},\{\frak c_{k,\beta}^{t}\},\{\frak m_{-1,\beta}^{t}\})$ be a pseudo-isotpy of  $G$-gapped inhomogeneous cyclic filtered $A_{\infty}$ algebra modulo $T^{E_0}$ between them.
\par
Then, $(C,\langle\cdot\rangle,\{\frak m_{k,\beta}^{i}\},\{\frak m_{-1,\beta}^{i}\})$ can be extended 
to a $G$-gapped inhomogeneous cyclic filtered $A_{\infty}$ algebra modulo $T^{E_1}$ and 
$(C,\langle\cdot\rangle,\{\frak m_{k,\beta}^{t}\},\{\frak c_{k,\beta}^{t}\},\{\frak m_{-1,\beta}^{t}\})$ 
can be extended to a pseudo-isotpy of  $G$-gapped inhomogeneous cyclic filtered $A_{\infty}$ algebra modulo $T^{E_1}$ between them.
\end{lmm}
\begin{proof}
We may assume that $G \cap [E_0,E_1) = \{E_0\}$. In 
\cite{F1} Theorem 8.1 the extension to cyclic filtered $A_{\infty}$ algebra mod $T^{E_1}$ and 
extension to pseudo-isotopy of cyclic filtered $A_{\infty}$ algebra mod $T^{E_1}$ are obtained.
So it suffices to find $\frak m_{-1,\beta}^t$ for 
$E(\beta) = E_0$.
We define 
$$
\frak m_{-1,\beta}^t  = \frak m_{-1,\beta}^1 
+ \sum_{\beta_1+\beta_2=\beta} \int_t^1 
\langle \frak c_{0,\beta_1}^t(1), 
\frak m_{0,\beta_2}^t(1) \rangle dt.
$$
It is easy to check (\ref{dervativemt}).
\end{proof}
We next construct gapped inhomogeneous cyclic filtered $A_{\infty}$ algebra 
\linebreak
$(\Lambda(L),\langle\cdot\rangle,\{\frak m_{k,\beta}\},
\{\frak m_{-1,\beta}\})$.
Let $E_i$ be sequence $0 < \ldots < E_i < E_{i+1} < \ldots$. 
We obtain a sequence 
$(\Lambda(L),\langle\cdot\rangle,\{\frak m^i_{k,\beta}\},\{\frak m^i_{-1,\beta}\})$
of inhomogenuous cyclic filtered $A_{\infty}$ algebra modulo $T^{E_i}$ for each $i$.
By Proposition \ref{pseudohomoinvmodT} we have 
a pseudo-isotopy of inhomogenuous cyclic filtered $A_{\infty}$ algebra modulo $T^{E_i}$
$(\Lambda(L),\langle\cdot\rangle,\{\frak m^{i,t}_{k,\beta}\},\{\frak c^{i,t}_{k,\beta}\},
\{\frak m^{i,t}_{-1,\beta}\})$
between $(\Lambda(L),\langle\cdot\rangle,\{\frak m^i_{k,\beta}\},\{\frak m^i_{-1,\beta}\})$ 
and $(\Lambda(L),\langle\cdot\rangle,\{\frak m^{i+1}_{k,\beta}\},\{\frak m^{i+1}_{-1,\beta}\})$.
\par
We then can use Lemma \ref{pisoextlem} in the same way as \cite{F1} Section 12 
and \cite{FOOO080} Section 7.2, to extend 
$(\Lambda(L),\langle\cdot\rangle,\{\frak m^i_{k,\beta}\},\{\frak m^i_{-1,\beta}\})$ to 
an inhomogenuous cyclic filtered $A_{\infty}$ algebra 
and $(\Lambda(L),\langle\cdot\rangle,\{\frak m^{i,t}_{k,\beta}\},\{\frak c^{i,t}_{k,\beta}\},
\{\frak m^{i,t}_{-1,\beta}\})$ 
to a pseudo-isotopy of  inhomogenuous cyclic filtered $A_{\infty}$ algebra between them.
They are isomorphic to each other. Therefore we have $(\Lambda(L),\langle\cdot\rangle,\{\frak m_{k,\beta}\},
\{\frak m_{-1,\beta}\})$.
\par
We can prove that it is independent of the choice of system of Kuranishi structures and continuous families of 
multisections in the same way as \cite{F1} Section 14 by working out the inhomogeneous version of 
pseudo-isotpy of pseudo-isotopies. We omit the detail of it. 
Instead, we complete the proof of Theorem \ref{superpotential} directly
without using inhomogeneous version of pseudo-isotpy of pseudo-isotopies but 
uses only the result of  \cite{F1} Section 14 and ones of this paper. 
\par
Let 
$(\Lambda(L),\langle\cdot\rangle,\{\frak m^{i \prime}_{k,\beta}\},\{\frak m^{i \prime}_{-1,\beta}\})$
be an inhomogeneour cyclic filtered $A_{\infty}$ algebras modulo $T^{E_i}$ obtained by alternative 
choices and
$(\Lambda(L),\langle\cdot\rangle,\{\frak m^{i,t \prime}_{k,\beta}\},\{\frak c^{i,t \prime}_{k,\beta}\},
\{\frak m^{i,t \prime}_{-1,\beta}\})$ a pseudo-isotopies modulo $T^{E_i}$ 
of inhomogeneour cyclic filtered $A_{\infty}$ algebras. We first extend them to 
inhomogeneour cyclic filtered $A_{\infty}$ algebras and pseudo-isotopies among them.
\par
By Proposition \ref{pseudohomoinvmodT}, 
$(\Lambda(L),\langle\cdot\rangle,\{\frak m^{i \prime}_{k,\beta}\},\{\frak m^{i \prime}_{-1,\beta}\})$ is 
pseudo-isotopic modulo $T^{E_i}$ to 
$(\Lambda(L),\langle\cdot\rangle,\{\frak m^{i \prime}_{k,\beta}\},\{\frak m^{i \prime}_{-1,\beta}\})$.
By \cite{F1} Theorem 14.1, this pseudo-isotpy modulo $T^{E_i}$ extends to  
a pseudo-isotpy of cyclic $A_{\infty}$ algebra. 
(We do not use the fact that it extends to pseudo-isotpy of {\it inhomogeneous} 
cyclic $A_{\infty}$ algebras here.)
Therefore by moduo $T^{E_i}$ version of Theorem \ref{pseudoisowelldef}, 
we have an isomorphism
$$
(\frak f_i)_* : \mathcal M(\Lambda(L),\{\frak m^{i}_{k,\beta}\};\Lambda_+) \cong 
\mathcal M(\Lambda(L),\{\frak m^{i \prime}_{k,\beta}\};\Lambda_+).
$$
By modulo $T^{E_i}$ version of Theorem \ref{inhomoexistmain} we have
\begin{equation}\label{410}
\Psi((\frak f_i)_* (b)) \equiv \Psi(b) \mod T^{E_i}.
\end{equation}
For $i>j$, let
$$
(\frak c_{i,j})_* : \mathcal M(\Lambda(L),\{\frak m^{j}_{k,\beta}\};\Lambda_+) \cong 
\mathcal M(\Lambda(L),\{\frak m^{i}_{k,\beta}\};\Lambda_+).
$$
and
$$
(\frak c ^{\prime}_{i,j})_* : \mathcal M(\Lambda(L),\{\frak m^{j \prime}_{k,\beta}\};\Lambda_+) \cong 
\mathcal M(\Lambda(L),\{\frak m^{i \prime}_{k,\beta}\};\Lambda_+).
$$
be the isomorphisms induced by the pseudo-isotopies. We have
\begin{equation}\label{411}
\Psi((\frak c ^{\prime}_{i,j})_*(b)) = \Psi((\frak c ^{\prime}_{i,j})_*(b)).
\end{equation}
Furthermore the construction of pseudo-isotopy of pseudo-isotopies in 
\cite{F1} Section 14 imply
\begin{equation}\label{412}
(\frak f_j)_*\circ (\frak c ^{\prime}_{i,j})_* = (\frak c ^{\prime}_{i,j})_*\circ (\frak f_i)_*.
\end{equation}
(\ref{410}), (\ref{411}), (\ref{412}) immediately imply
$$
\Psi((\frak f_1)_* (b)) = \Psi(b).
$$
We thus proved Theorem \ref{superpotential}.3.
The proof of Theorem \ref{superpotential} is now complete.
\end{proof}
\section{Relation to canonical model}
\label{canonicalmodelsec}
In \cite{FOOO080} Subsection 5.4.4 and \cite{F1} Section 10, we defined 
canonical model
$(H,\langle \cdot \rangle,\{ \frak m_{k,\beta}^{\text{\rm can}}\})$ 
of $G$-gapped cyclic filtered $A_{\infty}$ algebra 
$(C,\langle \cdot \rangle,\{ \frak m_{k,\beta}\})$.
(We assumed $\overline C$ is either finite dimensional or 
de Rham complex $\Lambda(L)$.)
We also constructed a 
$G$-gapped cyclic filtered $A_{\infty}$ homomorphism 
$\frak f : H \to C$, which is a homotopy equivalence.
Suppose that 
$(C,\langle \cdot \rangle,\{ \frak m_{k,\beta}\},\{\frak m_{-1,\beta}
\})$ is an {\it inhomegeneous} $G$-gapped cyclic filtered $A_{\infty}$ algebra.
In this section, we will define 
$\frak m_{-1,\beta}^{\text{\rm can}}$ so that
$\frak f_* : \mathcal M(H;\Lambda_+) \to  \mathcal M(C;\Lambda_+)$
preserves superpotential.
\par
To define $\frak m_{-1,\beta}^{\text{\rm can}}$
we need some notations. We use results and notations of 
\cite{F1} Sections 9 and 10 in this section.
\par
Let $T$ be a ribbon tree. Let $C_0(T)$ be the set of vertices. 
We assume that we have its decomposition 
$C_0(T) = C^{\text{\rm int}}_0(T) \sqcup C^{\text{\rm ext}}_0(T)$ 
to interior vertices and exterior vertices.
Let $\beta(\cdot) : C^{\text{\rm int}}_0(T) \to G$ be a map to a 
discrete submonoid $G$ of $\R_{\ge 0}$.
\begin{dfn}
We denote by $Gr^-(k,\beta)$ the set of $\Gamma =(T,C^{\text{\rm int}}_0(T),C^{\text{\rm ext}}_0(T),\beta(\cdot))$
such that:
(1)  $\sum_{v \in C^{\text{\rm int}}_0(T)}\beta(v) = \beta$.
(2) $\#C^{\text{\rm ext}}_0(T) = k$. 
(3)  If $\beta(v) = 0$, then $v$ has at least 3 edges.
\par
The automorphism group $\text{\rm Aut}(\Gamma)$ of an element 
$\Gamma =(T,C^{\text{\rm int}}_0(T),C^{\text{\rm ext}}_0(T),\beta(\cdot))$ of $Gr^-(k,\beta)$ 
is the set of isomorphisms $\phi : T \to T$  of ribbon tree which preserves the 
decomposition $C_0(T) = C^{\text{\rm int}}_0(T) \sqcup C^{\text{\rm ext}}_0(T)$ 
and such that $\beta(\phi(v)) = \beta(v)$. 
\end{dfn}
We remark that 
$k=0,1,\ldots$ in  $Gr^-(k,\beta)$. The case $k=0$ is included.
We also remark that the automorphism of {\it rooted} ribbon tree is trivial.
\par
Let $(v,e)$ be a flag of $\Gamma$, that is a pair of an interior 
vertex $v$ and an edge $e$ containing $v$.
Let $b \in \overline C^1$.
We are going to define
$\frak m(\Gamma;b) \in \R$.
\par
Let $T_0,\ldots,T_{\ell}$ be the irreducible components of 
$\Gamma \setminus v$. We enumerate them so that
$e \in T_0$ and they respect counter clockwise cyclic order of $\R^2$. 
Together with the data induced from $\Gamma$, the tree $T_i$ defines an element 
$\Gamma_i \in Gr(k_i,\beta_i)$. Here $Gr(k_i,\beta_i)$ is as in 
\cite{F1} Definition 9.1. Namely its element is an element of 
$Gr^-(k_i,\beta_i)$ together with a choice of a base point which 
is an exterior vertex. In our situation the base points of $\Gamma_i$ are $v$ for all $i$.
\begin{dfn}\label{mgamma}
$$
\frak m(\Gamma,v,e;b) 
= \langle \frak m_{\ell,\beta(v)}(\frak f_{\Gamma_1}(b,\ldots,b),\ldots,
\frak f_{\Gamma_{\ell}}(b,\ldots,b)),\frak f_{\Gamma_0}(b,\ldots,b)\rangle.
$$
Here $\frak f_{\Gamma}$ is defined in \cite{F1} section 10.
\end{dfn}
We remark that there is no sign in Definition \ref{mgamma},
since the degree of $b$ after shifted is even.
\begin{lmm}
$\frak m(\Gamma,v,e;b)$ is independent of $v$ and $e$ and depends only 
on $\Gamma$ and $b$.
\end{lmm}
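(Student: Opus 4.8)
The plan is to reduce to two elementary moves on the flag and check invariance under each. Move (I): keep $v$ and replace the edge $e$ by another edge $e'$ of $v$. Move (II): replace $v$ by an interior vertex $v'$ adjacent to it in $\Gamma$. Because $\Gamma$ is a tree and every vertex lying on the path between two interior vertices has valence $\ge 2$ and is therefore interior, iterating (II) connects any two choices of interior vertex; together with (I) this reaches every flag, so invariance under (I) and (II) gives the lemma, and in particular $\frak m(\Gamma,v,e;b)$ then depends only on $\Gamma$ and $b$.

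For move (I) it is enough to pass from $e$ to the next edge $e'$ of $v$ in the counterclockwise order. If $T_0,T_1,\ldots,T_\ell$ are the components of $\Gamma\setminus v$ listed as in Definition \ref{mgamma} for the flag $(v,e)$, then for $(v,e')$ the same components appear cyclically shifted to $T_1,\ldots,T_\ell,T_0$. Hence $\frak m(\Gamma,v,e';b)$ is obtained from $\frak m(\Gamma,v,e;b)=\langle\frak m_{\ell,\beta(v)}(\frak f_{\Gamma_1}(b,\ldots,b),\ldots,\frak f_{\Gamma_\ell}(b,\ldots,b)),\frak f_{\Gamma_0}(b,\ldots,b)\rangle$ by a cyclic rotation of the $\ell+1$ arguments, and the two coincide by the cyclic symmetry axiom for $\{\frak m_{k,\beta}\}$. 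All Koszul signs are $+1$ because each $\frak f_{\Gamma_i}(b,\ldots,b)$ has shifted degree $\deg'b=0$, exactly as noted after Definition \ref{mgamma}.

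For move (II), let $e^{*}$ be the edge joining $v$ and $v'$; by move (I) we may evaluate $\frak m(\Gamma,v,\cdot;b)$ and $\frak m(\Gamma,v',\cdot;b)$ both with distinguished edge $e^{*}$. For the flag $(v,e^{*})$, the component $T_0\subset\Gamma\setminus v$ is the one containing $v'$, and $v'$ is the interior vertex of $\Gamma_0$ adjacent to its root $v$; unfolding one step of the recursive definition of $\frak f_{\Gamma_0}$ from \cite{F1} Section 10 gives $\frak f_{\Gamma_0}(b,\ldots,b)=G\big(\frak m_{m,\beta(v')}(\frak f_{\Gamma'_1}(b,\ldots,b),\ldots,\frak f_{\Gamma'_m}(b,\ldots,b))\big)$, where $G$ is the homotopy operator of the canonical model and $\Gamma'_1,\ldots,\Gamma'_m$ are the subtrees of $v'$ on the side of $e^{*}$ away from $v$. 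Symmetrically, for $(v',e^{*})$ the component of $\Gamma\setminus v'$ containing $v$ contributes $G\big(\frak m_{\ell,\beta(v)}(\frak f_{\Gamma_1}(b,\ldots,b),\ldots,\frak f_{\Gamma_\ell}(b,\ldots,b))\big)$, the remaining subtrees of $v'$ being precisely $\Gamma_1,\ldots,\Gamma_\ell$. Writing $A=\frak m_{\ell,\beta(v)}(\frak f_{\Gamma_1}(b,\ldots,b),\ldots)$ and $B=\frak m_{m,\beta(v')}(\frak f_{\Gamma'_1}(b,\ldots,b),\ldots)$, the two quantities are
$$
\frak m(\Gamma,v,e^{*};b)=\langle A,GB\rangle,\qquad \frak m(\Gamma,v',e^{*};b)=\langle B,GA\rangle,
$$
and they agree by the graded symmetry of $\langle\cdot,\cdot\rangle$ together with the self-adjointness of $G$ with respect to $\langle\cdot,\cdot\rangle$, both of which are part of the cyclic canonical model construction of \cite{F1} Section 10.

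The one delicate point is the bookkeeping in move (II): one has to check that the recursive unfolding of $\frak f_{\Gamma_0}$ produces exactly one factor $G$ and that it sits on the edge $e^{*}$, and that the self-adjointness of $G$ and the symmetry of $\langle\cdot,\cdot\rangle$ combine with the correct signs --- which they do, ultimately because $b$, and hence every $\frak f_{\Gamma_i}(b,\ldots,b)$, has even shifted degree, so no Koszul sign ever appears. Granting this, move (I) disposes of the dependence on $e$, move (II) of the dependence on $v$, and a path in $\Gamma$ between interior vertices reduces the general comparison to the adjacent case, completing the proof.
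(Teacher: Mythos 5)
The paper itself offers no internal proof of this lemma: it is quoted from \cite{F1}, Proposition 10.1, so there is nothing in the present text to compare your argument against line by line. That said, your reconstruction is the natural one and matches the mechanism the paper does use later (in the proof of Sublemma \ref{sublemasum}): rotating the distinguished edge at a fixed vertex is exactly cyclic symmetry of $\langle \frak m_{\ell,\beta(v)}(\cdots),\cdot\rangle$, and moving the distinguished vertex across an interior edge amounts to unfolding one step of the recursion $\frak f_{\Gamma_0}(b,\ldots,b)=G\bigl(\frak m_{m,\beta(v')}(\frak f_{\Gamma'_1}(b,\ldots,b),\ldots,\frak f_{\Gamma'_m}(b,\ldots,b))\bigr)$, which is legitimate because the adjacent vertex $v'$ is interior, so $\Gamma_0$ is not the trivial tree. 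The reduction of the general comparison to the two elementary moves via a path of interior vertices is also fine, since exterior vertices are leaves.

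Two points in move (II) need tightening. First, the key identity $\langle A,GB\rangle=\langle B,GA\rangle$ is justified by ``self-adjointness of $G$,'' which is not among the properties of $G$ quoted anywhere in this paper; what is quoted from \cite{F1} Lemma 10.1 is the homotopy relation $\frak m_{1,0}\circ G=-G\circ\frak m_{1,0}+\Pi-\mathrm{identity}$ and the vanishing $\langle \operatorname{Im} G,\operatorname{Im} G+\operatorname{Im}\Pi\rangle=0$. You must either import the symmetry of the kernel $\tilde G$ from the explicit Hodge-theoretic construction of $G$ in \cite{F1}, or (safer) derive the identity from the quoted properties: write $A=\Pi A-\frak m_{1,0}GA-G\frak m_{1,0}A$, pair with $GB$, kill the $\Pi$-term and the $G$-term using $\langle \operatorname{Im} G,\operatorname{Im} G+\operatorname{Im}\Pi\rangle=0$, then move $\frak m_{1,0}$ across the pairing (compatibility of $\frak m_{1,0}$ with $\langle\cdot,\cdot\rangle$) and apply the homotopy relation to $\frak m_{1,0}GB$ to land on $\langle B,GA\rangle$. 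Second, the blanket claim that ``no Koszul sign ever appears because every $\frak f_{\Gamma_i}(b,\ldots,b)$ has even shifted degree'' is correct for move (I), but in move (II) the elements $A$ and $B$ are outputs of $\frak m$ and have odd shifted degree, so the symmetry of $\langle\cdot,\cdot\rangle$ and the adjunction property of $G$ each carry degree-dependent signs; they do cancel, but this must be checked against the sign conventions of \cite{F1} rather than waved away. With these two repairs your argument is complete and is, in all likelihood, the same proof as \cite{F1} Proposition 10.1.
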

This is Proposition 10.1 \cite{F1}.
Hereafter we write $\frak m(\Gamma;b)$ in place of $\frak m(\Gamma,v,e;b)$.
\begin{dfn}\label{m-1can}
$$
\frak m^{\text{\rm can}}_{-1,\beta} = \sum_{\Gamma \in Gr^-(0,\beta)} 
\frac{\frak m(\Gamma)}{\#\text{\rm Aut}(\Gamma)}.
$$
\end{dfn}
We remark that we write $\frak m(\Gamma)$ instead of $\frak m(\Gamma;b)$, 
since in the case of $\Gamma \in Gr^-(0,\beta)$ there is no exterior vertex and 
hence $b$ never appears.
\par
$(H,\langle \cdot \rangle,\{ \frak m_{k,\beta}^{\text{\rm can}}\},
\{ \frak m_{-1,\beta}^{\text{\rm can}}\})$ is an 
inhomegeneous $G$-gapped cyclic filtered $A_{\infty}$ algebra.
Let
$$
\Psi^{\text{\rm can}} : \mathcal M(H;\Lambda_+) \to \Lambda_+
$$
be its superpotential.
The filtered $A_{\infty}$ homomorphism $\frak f : H \to C$ 
induces 
$
\frak f_* : \mathcal M(H;\Lambda_+) \to \mathcal M(C;\Lambda_+)
$
by
\begin{equation}
\frak f_*(b) = \sum_{k=0}^{\infty}\sum_{\beta\in G}
T^{E(\beta)}\frak f_{k,\beta}(b,\ldots,b).
\end{equation}
The main result of this section is:
\begin{thm}\label{canmainth}
\begin{equation}
\Psi(\frak f_*(b)) = \Psi^{\text{\rm can}}(b).
\end{equation}
\end{thm}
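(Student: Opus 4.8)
The plan is to expand both $\Psi^{\text{\rm can}}(b)$ and $\Psi(\frak f_*(b))$ as sums over the ribbon trees of this section and to compare them; the essential tool is the cyclic symmetry Lemma (Proposition 10.1 of \cite{F1}), which says that $\frak m(\Gamma,v,e;b)$ depends only on $\Gamma$ and $b$. Recall $\Psi^{\text{\rm can}}(b)=\Psi'(b)+\sum_{\beta}T^{E(\beta)}\frak m^{\text{\rm can}}_{-1,\beta}$, the first term being $\Psi'$ formed from $\{\frak m^{\text{\rm can}}_{k,\beta}\}$, and similarly $\Psi(\frak f_*(b))=\Psi'(\frak f_*(b))+\sum_{\beta}T^{E(\beta)}\frak m_{-1,\beta}$ with $\Psi'$ formed from $\{\frak m_{k,\beta}\}$.

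First I would treat the canonical side. Substituting the tree expansion of $\frak m^{\text{\rm can}}_{k,\beta}$ from \cite{F1} Section 10 into $\Psi'(b)=\sum_{\beta}T^{E(\beta)}\sum_{k\ge 0}\frac1{k+1}\langle\frak m^{\text{\rm can}}_{k,\beta}(b,\ldots,b),b\rangle$, and using that the harmonic projection is self-adjoint for $\langle\cdot,\cdot\rangle$ so that the root projection disappears on pairing with $b$, each summand becomes $\frak m(\Gamma,v,e;b)$ for the unrooted tree $\Gamma\in Gr^-(k+1,\beta)$ underlying the rooted one, with $(v,e)$ the flag at the root. By the cyclic symmetry Lemma this equals $\frak m(\Gamma;b)$ whichever of the $k+1$ legs served as root; since a rooted ribbon tree has trivial automorphism group, $\text{\rm Aut}(\Gamma)$ acts freely on those legs, so precisely $(k+1)/\#\text{\rm Aut}(\Gamma)$ rooted trees lie over a given $\Gamma$, and the factor $\frac1{k+1}$ cancels this exactly. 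Thus $\Psi'(b)=\sum_{\beta}T^{E(\beta)}\sum_{k\ge 1}\sum_{\Gamma\in Gr^-(k,\beta)}\frac{\frak m(\Gamma;b)}{\#\text{\rm Aut}(\Gamma)}$, and adjoining $\sum_{\beta}T^{E(\beta)}\frak m^{\text{\rm can}}_{-1,\beta}=\sum_{\beta}T^{E(\beta)}\sum_{\Gamma\in Gr^-(0,\beta)}\frac{\frak m(\Gamma)}{\#\text{\rm Aut}(\Gamma)}$ from Definition \ref{m-1can} (the one-vertex tree being given the value $\frak m_{-1,\beta}$) gives
$$
\Psi^{\text{\rm can}}(b)\;=\;\sum_{\beta}T^{E(\beta)}\sum_{k\ge 0}\sum_{\Gamma\in Gr^-(k,\beta)}\frac{\frak m(\Gamma;b)}{\#\text{\rm Aut}(\Gamma)}.
$$

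Next I would carry out the analogous expansion of $\Psi(\frak f_*(b))$. Inserting the tree expansion $\frak f_*(b)=\sum_{k,\beta}T^{E(\beta)}\sum_{\Gamma'}\frak f_{\Gamma'}(b,\ldots,b)$ of \cite{F1} Section 10 into $\Psi'(\frak f_*(b))=\sum_{\beta}T^{E(\beta)}\sum_{\ell\ge 0}\frac1{\ell+1}\langle\frak m_{\ell,\beta_0}(\frak f_*(b),\ldots),\frak f_*(b)\rangle$ and grafting, at the vertex carrying $\frak m_{\ell,\beta_0}$, the rooted trees feeding its inputs together with the one attached through the pairing, each term is by Definition \ref{mgamma} exactly $\frak m(\Gamma,v,e;b)$ for the grafted tree $\Gamma$ and the flag $(v,e)$ at that vertex. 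In contrast to the canonical side, a fixed unrooted $\Gamma$ now arises once for every flag $(v,e)$, and since $\text{\rm Aut}(\Gamma)$ acts freely on flags one finds, after the $\frac1{\ell+1}$ normalization, that the coefficient of $\frak m(\Gamma;b)$ is $\#\{\text{interior vertices of }\Gamma\}/\#\text{\rm Aut}(\Gamma)$, while the one-vertex tree does not occur. Hence, using the Lemma,
$$
\Psi'(\frak f_*(b))\;=\;\sum_{\beta}T^{E(\beta)}\sum_{\Gamma\neq\,\text{one-vertex}}\frac{\#\{\text{interior vertices of }\Gamma\}}{\#\text{\rm Aut}(\Gamma)}\,\frak m(\Gamma;b).
$$
Comparing this with the displayed formula for $\Psi^{\text{\rm can}}(b)$, and using that a tree has one more interior vertex than it has interior edges, the claim $\Psi(\frak f_*(b))=\Psi^{\text{\rm can}}(b)$ is reduced to the single identity
$$
\sum_{\beta}T^{E(\beta)}\sum_{\Gamma\in Gr^-(k,\beta)}\frac{\#\{\text{interior edges of }\Gamma\}}{\#\text{\rm Aut}(\Gamma)}\,\frak m(\Gamma;b)\;=\;0
$$
for $b$ a bounding cochain.

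The remaining step — proving this vanishing — is where the real content lies, and I expect it to be the main obstacle. The plan is to rewrite the left side as a sum over pairs $(\Gamma,e)$ of a tree and a distinguished interior edge (with weight $1/\#\text{\rm Aut}(\Gamma)$) and to cut $\Gamma$ along $e$: the homotopy operator carried by $e$ in the amplitude $\frak m(\Gamma;b)$, together with the homotopy identity $dh+hd=\iota\Pi-\mathrm{id}$ and its curved counterparts in the $\beta\neq 0$ parts, and the $A_\infty$-homomorphism equations for $\frak f$ — equivalently the Maurer--Cartan equation satisfied by $b$ and hence by $\frak f_*(b)$ — allow these contributions to be grouped into telescoping pairs that cancel, exactly the mechanism that shows $\frak m^{\text{\rm can}}$ is an $A_\infty$ structure and $\frak f$ an $A_\infty$ homomorphism, applied one order further. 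By comparison the bookkeeping of automorphism weights is routine, there are no signs to track since $\deg'b=0$ is even (the remark after Definition \ref{mgamma}), and the $T$-adic convergence of all the series is handled exactly as for $\Psi'$ in Section \ref{superpotsec}.
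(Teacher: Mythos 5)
Your canonical-side computation is correct and is exactly the paper's Lemma \ref{PhiisPsican}: the factor $\frac1{k+1}$ cancels against the number of choices of root leg, giving $\Psi^{\text{\rm can}}(b)=\sum_\beta T^{E(\beta)}\sum_{\Gamma}\frak m(\Gamma;b)/\#\text{\rm Aut}(\Gamma)$. The gap is on the other side. Your expansion of $\Psi'(\frak f_*(b))$ with coefficient $\#C_0^{\text{\rm int}}(\Gamma)/\#\text{\rm Aut}(\Gamma)$ silently discards the term $(\ell,\beta_0)=(1,0)$: when the vertex you graft carries $\frak m_{1,0}$ (the differential), it is a two-valent vertex with $\beta=0$, so the grafted graph is \emph{not} an element of $Gr^-$ and its amplitude is not of the form $\frak m(\Gamma,v,e;b)$. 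Hence the coefficient $\#C_0^{\text{\rm int}}(\Gamma)$ only accounts for the summands with $(\ell,\beta_0)\ne(1,0)$ (this is the paper's Lemma \ref{mne01sum}), and your reduction of the theorem to
\begin{equation*}
\sum_{\beta}T^{E(\beta)}\sum_{\Gamma\in Gr^-(k,\beta)}\frac{\#C_1^{\text{\rm int}}(\Gamma)}{\#\text{\rm Aut}(\Gamma)}\,\frak m(\Gamma;b)=0
\end{equation*}
is not correct: this sum does not vanish, so the ``remaining step'' you propose cannot be carried out. Indeed, already for $b=0$ (e.g. $H^1(L;\R)=0$) its lowest-energy contribution comes from the two-vertex one-edge tree and equals $\pm\langle \frak m_{0,\beta_1}(1),G\,\frak m_{0,\beta_2}(1)\rangle$, which is generically nonzero; if this edge-weighted sum always vanished, essentially all tree corrections in (\ref{sumtreeformula}) would be vacuous.

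What is true, and is the actual content of the paper's argument (Lemma \ref{m01sum} together with Sublemma \ref{sublemasum}), is that the term you dropped is exactly this edge-weighted sum with a definite coefficient:
\begin{equation*}
\langle \frak m_{1,0}(\frak f_*(b)),\frak f_*(b)\rangle
=-2\sum_{\beta}T^{E(\beta)}\sum_{\Gamma}\frac{\#C_1^{\text{\rm int}}(\Gamma)}{\#\text{\rm Aut}(\Gamma)}\,\frak m(\Gamma;b),
\end{equation*}
proved by pushing $\frak m_{1,0}$ through the homotopy operator on the adjacent edge via $\frak m_{1,0}\circ G=-G\circ\frak m_{1,0}+\Pi-\text{\rm id}$, using $\langle\text{\rm Im}\,G,\text{\rm Im}\,G+\text{\rm Im}\,\Pi\rangle=0$, and killing the $\Pi$/identity corrections at root-adjacent edges by the Maurer--Cartan equation for $b$. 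With the weight $\frac12$ from $\Psi'$ this contributes $-\#C_1^{\text{\rm int}}(\Gamma)$, and the theorem then follows from $\#C_0^{\text{\rm int}}(\Gamma)-\#C_1^{\text{\rm int}}(\Gamma)=1$, not from $\#C_0^{\text{\rm int}}$ alone plus a vanishing. The mechanism you sketch in your last paragraph (cut along an interior edge, homotopy identity, Maurer--Cartan) is precisely the right toolkit, but it should be aimed at establishing the displayed identity for the $\frak m_{1,0}$-term rather than at the false vanishing statement; once redirected this way, your argument becomes the paper's proof.
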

\begin{rem}
We consider the case of $\overline C = \Lambda(L)$ with 
$H^1(L;\R) = 0$.
Then since $H^1 = 0$, the set $\mathcal M(H;\Lambda_+)$ consists 
of one point $0$. Therefore $\mathcal M(C;\Lambda_+)$  also consists of one point.
The invariant of Corollary \ref{maincor} is the value of superpotential at this point.
\par
Theorem \ref{canmainth} implies that this invariant is 
\begin{equation}\label{sumtreeformula}
\sum_{\beta\in G}  T^{E(\beta)} \frak m_{-1,\beta}^{\text{\rm can}}
= 
\sum_{\beta\in G} \sum_{\Gamma \in Gr^{-1}(0,\beta)}
\frac{T^{E(\beta)}}{\#\text{\rm Aut}(\Gamma)}\frak m(\Gamma).
\end{equation}
\end{rem}
\begin{proof}[Proof of Theorem \ref{canmainth}]
Let $b\in H^1_+ = H^1 \otimes_{\R} \Lambda_+$. We define
\begin{equation}
\Phi(b) = \sum_{k=0}^{\infty}\sum_{\beta \in G}
\sum_{\Gamma \in Gr^{-1}(k,\beta)}
\frac{T^{E(\beta)}}{\#\text{\rm Aut}(\Gamma)} \frak m(\Gamma;b) \in \Lambda_+.
\end{equation}
\begin{lmm}\label{PhiisPsican}
$$
\Phi(b) = \Psi^{\text{\rm can}}(b).
$$
\end{lmm}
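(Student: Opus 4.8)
The plan is to expand $\Psi^{\text{\rm can}}(b)=\Psi'(b)+\sum_{\beta}T^{E(\beta)}\frak m^{\text{\rm can}}_{-1,\beta}$ into its underlying tree sum and match it with $\Phi(b)$, treating separately the trees without exterior vertices. For $\Gamma\in Gr^-(0,\beta)$ no input variable occurs, so $\frak m(\Gamma;b)=\frak m(\Gamma)$, and hence the $k=0$ part of $\Phi(b)$ is, by Definition \ref{m-1can}, exactly $\sum_{\beta}T^{E(\beta)}\frak m^{\text{\rm can}}_{-1,\beta}$. It therefore suffices to show that the part of $\Phi(b)$ with $k\ge 1$ equals $\Psi'(b)=\sum_{k\ge 0}\sum_{\beta}\frac{T^{E(\beta)}}{k+1}\langle\frak m^{\text{\rm can}}_{k,\beta}(b,\ldots,b),b\rangle$.

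First I would recall from \cite{F1} Section~10 the tree description $\frak m^{\text{\rm can}}_{k,\beta}=\sum_{\Gamma'}\frak m_{\Gamma'}$, where $\Gamma'$ runs over the rooted ribbon trees with $k$ input leaves and one output, interior vertices decorated by elements of $G$ summing to $\beta$, subject to the stability condition; if $v$ is the interior vertex at the other end of the output edge $e$ and $\Gamma'_1,\ldots,\Gamma'_{\ell}$ are the branches at $v$ pointing away from the output, then $\frak m_{\Gamma'}(b,\ldots,b)=\frak m_{\ell,\beta(v)}(\frak f_{\Gamma'_1}(b,\ldots,b),\ldots,\frak f_{\Gamma'_{\ell}}(b,\ldots,b))$. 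Forgetting the root identifies $\Gamma'$ with an unrooted tree $\pi(\Gamma')\in Gr^-(k+1,\beta)$ in which the former output is just one more exterior vertex, and comparing with Definition \ref{mgamma} for the flag $(v,e)$ — in which the component $\Gamma_0$ degenerates to that single exterior vertex, so $\frak f_{\Gamma_0}(b,\ldots,b)=b$ — gives $\langle\frak m_{\Gamma'}(b,\ldots,b),b\rangle=\frak m(\pi(\Gamma'),v,e;b)$, which by the lemma preceding Definition \ref{m-1can} (Proposition~10.1 of \cite{F1}) equals $\frak m(\pi(\Gamma');b)$. Thus
\[
\langle\frak m^{\text{\rm can}}_{k,\beta}(b,\ldots,b),b\rangle=\sum_{\Gamma':\,\pi(\Gamma')\in Gr^-(k+1,\beta)}\frak m(\pi(\Gamma');b).
\]

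It then remains to count, for a fixed $\Gamma\in Gr^-(k+1,\beta)$, how many rooted trees $\Gamma'$ satisfy $\pi(\Gamma')=\Gamma$: these correspond to the choices of one of the $k+1$ exterior vertices of $\Gamma$ as the output, two choices producing isomorphic $\Gamma'$ exactly when they differ by an element of $\text{\rm Aut}(\Gamma)$. Since a rooted ribbon tree has trivial automorphism group, the stabilizer in $\text{\rm Aut}(\Gamma)$ of any exterior vertex is trivial, so $\text{\rm Aut}(\Gamma)$ acts freely on the $k+1$ exterior vertices and there are $(k+1)/\#\text{\rm Aut}(\Gamma)$ isomorphism classes of $\Gamma'$ lying over $\Gamma$. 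Substituting this into the displayed identity,
\[
\Psi'(b)=\sum_{k\ge 0}\sum_{\beta}\frac{T^{E(\beta)}}{k+1}\sum_{\Gamma\in Gr^-(k+1,\beta)}\frac{k+1}{\#\text{\rm Aut}(\Gamma)}\,\frak m(\Gamma;b)=\sum_{n\ge 1}\sum_{\beta}\sum_{\Gamma\in Gr^-(n,\beta)}\frac{T^{E(\beta)}}{\#\text{\rm Aut}(\Gamma)}\,\frak m(\Gamma;b),
\]
which is precisely the $k\ge 1$ part of $\Phi(b)$. Together with the first paragraph, $\Phi(b)=\Psi'(b)+\sum_{\beta}T^{E(\beta)}\frak m^{\text{\rm can}}_{-1,\beta}=\Psi^{\text{\rm can}}(b)$.

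I expect the main obstacle to be the middle step: one has to check that the composition defining $\frak m_{\Gamma'}$ in \cite{F1} Section~10 (with $\frak m_{k,\beta}$ at the interior vertices and the chosen homotopy along the edges) paired against $b$ through the cyclic form reproduces $\frak m(\Gamma,v,e;b)$ of Definition \ref{mgamma}, and — most importantly — to invoke the cyclic-symmetry statement so that the value is independent of which exterior vertex is designated as output. Once this flag-independence is in place, the orbit count, which relies on the rigidity of rooted ribbon trees, is routine bookkeeping.
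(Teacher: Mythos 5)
Your proposal is correct and follows essentially the same route as the paper: the paper likewise splits off the $Gr^-(0,\beta)$ terms via Definition \ref{m-1can} and reduces to the identity $\langle \frak m^{\text{\rm can}}_{k,\beta}(b,\ldots,b),b\rangle=(k+1)\sum_{\Gamma\in Gr^-(k+1,\beta)}\frak m(\Gamma;b)/\#\text{\rm Aut}(\Gamma)$, proved by expanding the canonical model as a sum over rooted trees, identifying the pairing with $b$ at the root with $\frak m(\Gamma,v,e;b)$, invoking Proposition 10.1 of \cite{F1} for flag-independence, and doing the same $\text{\rm Aut}(\Gamma)$-orbit count over the $k+1$ exterior vertices. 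Your fiber-counting of the root-forgetting map is just a rephrasing of the paper's reindexing of the rooted-tree sum, so there is no essential difference.
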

\begin{proof}
In view of Definition \ref{m-1can} it suffices to prove:
\begin{equation}\label{canmainformula1}
\langle \frak m_{k,\beta}(b,\ldots,b),b\rangle
= (k+1) \sum_{\Gamma \in Gr^-(k+1,\beta)} 
\frac{\frak m(\Gamma;b)}{\#\text{\rm Aut}(\Gamma)}.
\end{equation}
We will prove (\ref{canmainformula1}) below.
\par
Let $\Gamma \in Gr^-(k+1,\beta)$.
Let $\{v_0,\ldots,v_k\} = C_0^{\text{ext}}(\Gamma)$
such that $v_0,\ldots,v_k$ respects the counter clockwise cyclic 
order of $\R^2$.
Let $e_i$ be the unique edge containing $v_i$.
We define $v'_i$ by $\partial e = \{v_i,v'_i\}$.
\par
By definition we have:
$$
\frak m_{k,\beta}(b,\ldots,b) = \sum_{\Gamma \in Gr^-(k+1,\beta)}\sum_{i=0}^k
\frac{\frak m_{(\Gamma,v_i)}(b,\ldots,b)}{\#\text{\rm Aut}(\Gamma)}.
$$
This is because $(\Gamma,v_i) \in Gr(k+1,\beta)$ and $(\Gamma,v_i)$ 
is the same element as $(\Gamma,v_j)$ in $ Gr(k+1,\beta)$ if and only if 
there exists an element of $\phi \in \text{\rm Aut}(\Gamma)$ such that 
$\phi(v_i) = v_j$.
\par
Moreover 
$$
\langle \frak m_{\Gamma,v_i}(b,\ldots,b),b\rangle
=\frak m(\Gamma,v_i,e_i;b,\ldots,b),
$$
where the right hand side in defined in Definition 10.1 \cite{F1}.
By Proposition 10.1 \cite{F1}, 
$\frak m(\Gamma,v_i,e_i;b,\ldots,b)$ is independent of $i$ and is
$\frak m(\Gamma;b)$.
This implies (\ref{canmainformula1}).
The proof of Lemma \ref{PhiisPsican} is complete.
\end{proof}
The next proposition completes the proof of 
Theorem  \ref{canmainth}.
\end{proof}
\begin{prp}\label{canmaincanprp}
If $b \in \widetilde{\mathcal M}(H;\Lambda_+)$, then we have:
\begin{equation}
\Phi(b) = \Psi^{\text{\rm can}}(\frak f_*(b)).
\end{equation}
\end{prp}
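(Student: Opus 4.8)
The plan is to expand $\Psi(\frak f_*(b))=\Psi'(\frak f_*(b))+\sum_{\beta}T^{E(\beta)}\frak m_{-1,\beta}$ into a sum over decorated ribbon trees and to identify it with $\Phi(b)$; this is the content of the proposition, and together with Lemma \ref{PhiisPsican} it completes the proof of Theorem \ref{canmainth}. The first step is to substitute, into each summand $\frac{T^{E(\beta_0)}}{k+1}\langle\frak m_{k,\beta_0}(\frak f_*(b),\ldots,\frak f_*(b)),\frak f_*(b)\rangle$ of $\Psi'(\frak f_*(b))$, the tree expansion $\frak f_*(b)=\sum_{\Gamma'}T^{E(\beta(\Gamma'))}\frak f_{\Gamma'}(b,\ldots,b)$ coming from the definition of $\frak f$ in \cite{F1} Section 10, the index $\Gamma'$ running over rooted ribbon trees. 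A generic resulting term is
\[
\frac{T^{E(\beta)}}{k+1}\,\langle\frak m_{k,\beta_0}(\frak f_{\Gamma'_1}(b,\ldots),\ldots,\frak f_{\Gamma'_k}(b,\ldots)),\frak f_{\Gamma'_0}(b,\ldots)\rangle,\qquad\beta=\beta_0+\sum_{i=0}^k\beta(\Gamma'_i),
\]
and grafting $\Gamma'_0,\ldots,\Gamma'_k$ (in this cyclic order) onto the $k+1$ flags of one vertex $v$ carrying $\frak m_{k,\beta_0}$ produces an element $\Gamma\in Gr^-$ together with the marked flag $(v,e)$, $e$ the edge towards $\Gamma'_0$. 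By the very Definition \ref{mgamma}, and since there is no sign ($\deg' b$ being even), this term equals $\frac{T^{E(\beta)}}{k+1}\,\frak m(\Gamma,v,e;b)$.

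The second step is to reorganise the sum by the underlying (unmarked) tree $\Gamma$. By \cite{F1} Proposition 10.1 the number $\frak m(\Gamma,v,e;b)=\frak m(\Gamma;b)$ is independent of the marked interior flag, so every summand producing a given $\Gamma$ contributes $\frac{1}{k+1}\frak m(\Gamma;b)$; one then has to do the bookkeeping of marked versus unmarked ribbon trees, in which the factors $\frac1{k+1}$ cancel against the multiplicity with which $\Gamma$ is obtained and the rigidity of rooted ribbon trees together with the free action of $\mathrm{Aut}(\Gamma)$ on the relevant flags yields the factor $1/\#\mathrm{Aut}(\Gamma)$. This gives
\[
\Psi'(\frak f_*(b))=\sum_{\Gamma}\frac{T^{E(\beta(\Gamma))}}{\#\mathrm{Aut}(\Gamma)}\,\frak m(\Gamma;b),
\]
the sum running over all $\Gamma\in Gr^-$ with at least one edge, i.e. over every element of $Gr^-$ other than the single-vertex ones (the $0$-exterior-vertex trees being produced when all flags at $v$ are closed off by the ``constant'' pieces of $\frak f_*(b)$). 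Since, for the single-vertex element of $Gr^-(0,\beta)$, the number $\frak m(\Gamma)$ is by construction $\frak m_{-1,\beta}$, adding $\sum_\beta T^{E(\beta)}\frak m_{-1,\beta}$ restores exactly the missing summands, and hence $\Psi(\frak f_*(b))=\sum_{\Gamma\in Gr^-}\frac{T^{E(\beta(\Gamma))}}{\#\mathrm{Aut}(\Gamma)}\frak m(\Gamma;b)$, which is $\Phi(b)$ by Definition \ref{m-1can} and the definition of $\Phi$.

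I expect the combinatorial bookkeeping of the second step to be the main obstacle; two further points also need care. The would-be distinguished vertex of a summand carries $\frak m_{k,\beta_0}$, and the cases $(k,\beta_0)=(1,(0,0))$ and $(0,(0,0))$ give configurations that are not stable trees in $Gr^-$; these must be seen to vanish, which they do since $\frak m_{0,(0,0)}=0$ and $\langle\frak m_{1,(0,0)}(\frak f_*(b)),\frak f_*(b)\rangle=\langle d\frak f_*(b),\frak f_*(b)\rangle=0$ by the cyclic symmetry of $\langle\cdot,\cdot\rangle$ (the sign being $-1$ as $\deg'\frak f_*(b)=0$). And the precise count, as well as the appearance of the constant pieces of $\frak f_*(b)$, rest on the conventions for the homotopy operator of the canonical model of \cite{F1} Sections 9 and 10 (in particular its compatibility with $\langle\cdot,\cdot\rangle$) and on the grafting combinatorics; this is the type of computation carried out in \cite{FOOO080} Section 7.2 and \cite{F1} Section 10, and I would organise it modulo $T^{E_0}$, inducting on the energy $E_0$ so that all sums are finite and invoking the Maurer-Cartan equation for $b$ (equivalently for $\frak f_*(b)$) to annihilate the residual boundary terms, in the spirit of the proofs of Proposition \ref{gginvariance} and Theorem \ref{pseudoisowelldef}.
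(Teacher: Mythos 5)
Your overall strategy (expand $\frak f_*(b)$ into rooted-tree contributions, graft them at the distinguished vertex carrying $\frak m_{\ell,\beta'}$, and use the flag-independence of $\frak m(\Gamma,v,e;b)$ from Proposition 10.1 of \cite{F1}) is the paper's strategy, but the two points you defer to ``bookkeeping'' are exactly where the argument lives, and as stated both claims are false. First, regrouping the expansion of $\Psi'(\frak f_*(b))$ by the underlying unmarked tree $\Gamma$ does \emph{not} give the coefficient $1/\#\text{\rm Aut}(\Gamma)$: every interior vertex $v$ of $\Gamma$ can serve as the distinguished $\frak m$-vertex, and the $\ell_v+1$ choices of flag at $v$ cancel the factor $1/(\ell_v+1)$, so the correct coefficient is $\#C_0^{\text{\rm int}}(\Gamma)/\#\text{\rm Aut}(\Gamma)$; this is precisely Lemma \ref{mne01sum}. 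Second, your disposal of the $(1,0)$ term is wrong: for arguments of shifted degree $0$ the cyclic-symmetry sign is $+1$, so cyclic symmetry gives a tautology, not vanishing, and in the de Rham model $\langle \frak m_{1,0}(x),x\rangle=\int_L x\wedge dx$ for a degree-one $x$, which has no reason to vanish on a $3$-manifold (think of a contact form). These two errors are correlated: in the paper the nonzero term $\tfrac12\langle\frak m_{1,0}(\frak f_*(b)),\frak f_*(b)\rangle$ is evaluated in Lemma \ref{m01sum} --- via $\frak m_{1,0}\circ G=-G\circ\frak m_{1,0}+\Pi-\text{\rm id}$, the side condition $\langle \text{\rm Im}\,G,\text{\rm Im}\,G+\text{\rm Im}\,\Pi\rangle=0$, and the Maurer--Cartan equation for $b$ --- and contributes $-\#C_1^{\text{\rm int}}(\Gamma)$ per tree, so that only the combination with Lemma \ref{mne01sum} and the tree identity $\#C_0^{\text{\rm int}}(\Gamma)-\#C_1^{\text{\rm int}}(\Gamma)=1$ produces the coefficient $1/\#\text{\rm Aut}(\Gamma)$ appearing in $\Phi(b)$. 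Without this mechanism your claimed identity $\Psi'(\frak f_*(b))=\sum_\Gamma T^{E(\beta)}\frak m(\Gamma;b)/\#\text{\rm Aut}(\Gamma)$ simply fails.

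A smaller discrepancy: your treatment of the constant terms, where you declare the single-vertex tree's amplitude to be ``by construction'' $\frak m_{-1,\beta}$, is not Definition \ref{mgamma} or \ref{m-1can}; in the paper the $Gr^-(0,\beta)$ trees are built from the homogeneous operations $\frak m_{\ell,\beta(v)}$ and the homotopy data, and the proof of Proposition \ref{canmaincanprp} matches the full tree sum against the homogeneous expression $\sum_{\ell,\beta'}\frac{T^{E(\beta')}}{\ell+1}\langle\frak m_{\ell,\beta'}(\frak f_*(b),\ldots,\frak f_*(b)),\frak f_*(b)\rangle$ rather than introducing the inhomogeneous term of $C$ by hand. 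Your points about $\frak m_{0,(0,0)}=0$ and about working modulo $T^{E_0}$ are fine, but they do not close the gap above.
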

\begin{proof}
\begin{lmm}\label{mne01sum}
\begin{equation}\label{sumovermnon0}
\aligned
&\sum_{(\ell,\beta') \ne (1,0)} \frac{T^{E(\beta)}}{\ell+1} 
\langle \frak m_{\ell,\beta'}(\frak f_*(b),\ldots,\frak f_*(b)),\frak f_*(b)\rangle \\
&\quad= \sum_{k=0}^{\infty}\sum_{\beta\in G}\sum_{\Gamma \in Gr^-(k,\beta)}
\frac{T^{E(\beta)}}{\#\text{\rm Aut}(\Gamma)}\#C_0^{\text{\rm int}}(\Gamma) 
\frak m(\Gamma;b).
\endaligned
\end{equation}
\end{lmm}
\begin{proof}
Let $\Gamma \in Gr^-(k,\beta)$ and $(v,e)$ its flag. We 
obtain the irreducible components 
$\Gamma_0,\ldots,\Gamma_{\ell}$ of $\Gamma \setminus v$ as before. By definition we have
\begin{equation}\label{53tochuu}
\langle \frak m_{\ell,\beta(v)}(\frak f_{\Gamma_1}(b,\ldots,b),\ldots,\frak f_{\Gamma_{\ell}}(b,\ldots,b)),
\frak f_{\Gamma_0}(b,\ldots,b)\rangle
= \frak m(\Gamma;b).
\end{equation}
We remark that the right hand side is independent of $(v,e)$ by Proposition 10.1 \cite{F1}.
If we take the sum of (\ref{53tochuu}) over all $\Gamma,v$ with weight $T^{E(\beta)}/\#\text{\rm Aut}(\Gamma)$ 
then we obtain the right hand side of 
(\ref{sumovermnon0}). 
On the other hand, if we take the sum of the right hand side of (\ref{53tochuu}) over all $\Gamma,v,e$ with 
weight $T^{E(\beta)}$ we obtain
$$
\sum_{(\ell,\beta') \ne (1,0)} T^{E(\beta')}
\langle \frak m_{\ell,\beta'}(\frak f_*(b),\ldots,\frak f_*(b)),\frak f_*(b)\rangle.
$$
Since the choice of $e$ for given $\Gamma, v$ is $\ell+1$, 
we obtain (\ref{sumovermnon0}).
\end{proof}
\begin{lmm}\label{m01sum}
\begin{equation}\label{sumoverm0}
\aligned
&\langle \frak m_{1,0}(\frak f_*(b)),\frak f_*(b)\rangle \\
&= -2 \sum_{k=0}^{\infty}\sum_{\beta\in G}\sum_{\Gamma \in Gr^-(k,\beta)}
\frac{T^{E(\beta)}}{\#\text{\rm Aut}(\Gamma)}\#C_1^{\text{\rm int}}(\Gamma) 
\frak m(\Gamma;b).
\endaligned
\end{equation}
\end{lmm}
Here $C_1^{\text{\rm int}}(\Gamma)$ is the set of interior edges.
\begin{proof}
Let $(v,e)$ be a flag of $\Gamma \in Gr^-(k,\beta)$ such that $e$ is an interior edge.
We define $\frak m'(\Gamma,e,v;b)$ as follows.
Let $T_{(0)}$, $T'_{(1)}$ be the irreducible components of $\Gamma \setminus e$ such that 
$T_{(0)}$ contains $v$. We put $T_{(1)} = T'_{(1)} \cup e$.
Using the data induced from $\Gamma$, the trees $T_{(0)}$, $T_{(1)}$ induce 
$\Gamma_{(0)} \in Gr(k_{(0)},\beta_{(0)})$, $\Gamma_{(1)}\in Gr(k_{(1)},\beta_{(1)})$.
(The roots of $\Gamma_{(0)}$, $\Gamma_{(1)}$ are $v$.)
We define
\begin{equation}
\frak m'(\Gamma,e,v;b) 
= \langle 
\frak m_{1,0}(\frak f_{\Gamma_{(1)}}(b,\ldots,b)),\frak f_{\Gamma_{(0)}}(b,\ldots,b))
\rangle.
\end{equation}
Let $v_0,\ldots,v_k$ be the set of exterior vertices of $\Gamma$.
Let $e_i$ be the edge containing $v_i$ and and $\partial e_i =\{v_i,v'_i\}$.
\begin{sublmm}\label{sublemasum}
If $v \ne v'_i$, $(i=0,\ldots,k)$ then 
\begin{equation}
\frak m'(\Gamma,e,v;b)  =  -\frak m(\Gamma;b).
\end{equation}
If $v= v'_i$ then
\begin{equation}\label{sublemspec}
\frak m'(\Gamma,e,v;b)  =  -\frak m(\Gamma;b) 
+ \langle
\frak m_{(\Gamma,v_i)}(b,\ldots,b),b\rangle.
\end{equation}
Here $(\Gamma,v_i) \in Gr(k,\beta)$ and $\frak m_{(\Gamma,v_i)}$
is defined in {\rm \cite{F1}} Section {\rm 10}.
\end{sublmm}
\begin{proof}
We use Lemma 10.1 \cite{F1}, its proof and notations there, during the proof 
of Sublemma \ref{sublemasum}.
\par
Let $\Gamma,v,e$ be as in Sublemma \ref{sublemasum}. 
We put $\partial e = \{v,v'\}$.
Let $T_0,\ldots,T_m$ be the irreducible components of 
$\Gamma \setminus v'$. We enumerate them so that 
$v \in T_0$ and it respects counter clockwise cyclic order of $\R^2$.  
$T_i$ together with the data induced from $\Gamma$ becomes $\Gamma_i$, 
whose root is $v'$.
By definition 
$$
\Gamma_{(1)} = \Gamma_1 \cup \ldots \cup \Gamma_m \cup e.
$$
Therefore by the definition in \cite{F1} Section 10, we have
\begin{equation}
\frak f_{\Gamma_{(1)}}(b,\ldots,b) 
= (G\circ \frak m_{m,\beta(v')})(\frak f_{\Gamma_1}(b,\ldots,b),\ldots,\frak f_{\Gamma_m}(b,\ldots,b)).
\end{equation}
Therefore 
\begin{equation}\aligned
\frak m'(\Gamma,e,v;b) 
= \langle
(\frak m_{1,0}\circ G\circ \frak m_{m,\beta(v')})(&\frak f_{\Gamma_1}(b,\ldots,b),\ldots,\\
&\frak f_{\Gamma_m}(b,\ldots,b)),
\frak f_{\Gamma_{(0)}}(b,\ldots,b)
\rangle.
\endaligned
\end{equation}
By Lemma 10.1 \cite{F1} we have
\begin{equation}
\frak m_{1,0} \circ G = - G \circ \frak m_{1,0} + \Pi - \text{\rm identity}.
\end{equation}
\par
We first assume $v \ne v'_i$. Then $\Gamma_{(0)} \in Gr(k_{(0)},\beta_{(0)})$ with 
$(k_{(0)},\beta_{(0)}) \ne (1,0)$. It follows that
$\frak f_{\Gamma_{(0)}}(b,\ldots,b) \in \text{\rm Im}\, G$.
We remark that 
$$
\langle \text{\rm Im}\, G, \text{\rm Im}\, G + \text{\rm Im}\, \Pi\rangle = 0.
$$
Therefore 
$$
\aligned
\frak m'(\Gamma,e,v;b) &= 
-\langle
\frak m_{m,\beta(v')}(\frak f_{\Gamma_1}(b,\ldots,b),\ldots,
\frak f_{\Gamma_m}(b,\ldots,b)),
\frak f_{\Gamma_{(0)}}(b,\ldots,b)
\rangle \\
&=
-\frak m(\Gamma;b),
\endaligned
$$
as required. 
\par
If $v=v_i$ then $\frak f_{\Gamma_{(0)}}$ is identity. Therefore
$$
\aligned
\frak m'(\Gamma,e,v;b) &= 
-\langle
\frak m_{m,\beta(v')}(\frak f_{\Gamma_1}(b,\ldots,b),\ldots,
\frak f_{\Gamma_m}(b,\ldots,b)),
b
\rangle \\
&\quad 
+\langle
(\Pi \circ \frak m_{m,\beta(v')})(\frak f_{\Gamma_1}(b,\ldots,b),\ldots,
\frak f_{\Gamma_m}(b,\ldots,b)),
b
\rangle 
\\
&=
-\frak m(\Gamma;b)
+ \langle
\frak m_{(\Gamma,v_i)}(b,\ldots,b),b\rangle.
\endaligned
$$
The proof of sublemma is complete.
\end{proof}
Using Maurer-Cartan equation for $b$ we find
$$
\sum_{k=0}^{\infty}\sum_{\beta\in G}\sum_{\Gamma\in Gr^-(k,\beta)}
\sum_{i=0}^k
\frac{T^{E(\beta)}}{\#\text{\rm Aut}(\Gamma)}\frak m_{(\Gamma,v_i)}(b,\ldots,b) = 0.
$$
Therefore the sum of the second term of (\ref{sublemspec}) vanishes.
Lemma \ref{m01sum} now follows from Sublemma \ref{sublemasum}.
\end{proof}
Since $\Gamma$ is a tree we have
$\#C_0^{\text{\rm int}}(\Gamma) - \#C_1^{\text{\rm int}}(\Gamma) = 1$.
Therefore Lemmas \ref{mne01sum} and \ref{m01sum} imply Proposition 
\ref{canmaincanprp}.
\end{proof}
Using the proof of Theorem \ref{canmainth} and \cite{F1} Section 9, we can prove the 
following:
\begin{thm}\label{canpseudoisot}
If two gapped inhomogeneous cyclic filtered $A_{\infty}$ algebras 
are pseudo-isotopic  
to each other, then 
so are their canonical models.
\end{thm}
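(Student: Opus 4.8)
The plan is to run the canonical model construction of \cite{F1} Section 10 in a family over $t\in[0,1]$, and then to supply the extra datum $\frak m^{t,\text{\rm can}}_{-1,\beta}$ by the tree formula of Definition \ref{m-1can} with $t$-dependent ingredients, checking the compatibility equation (\ref{dervativemt}) by a computation modelled on the proofs of Theorem \ref{pseudoisowelldef} and Theorem \ref{canmainth}.

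First I would treat the homogeneous part. Let $(C,\langle\cdot\rangle,\{\frak m^t_{k,\beta}\},\{\frak c^t_{k,\beta}\},\{\frak m^t_{-1,\beta}\})$ be the given pseudo-isotopy of $G$-gapped inhomogeneous cyclic filtered $A_{\infty}$ algebras. Since $\frak m^t_{1,0}$ is independent of $t$ by axiom (5) of pseudo-isotopy, the homotopy operator $G$ and the projection $\Pi$ entering the canonical model construction can be chosen independent of $t$. Applying the construction of \cite{F1} Section 10 at each $t$ then produces a smooth family $\{\frak m^{t,\text{\rm can}}_{k,\beta}\}$ of cyclic filtered $A_{\infty}$ structures on $H$ together with a smooth family $\frak f^t\colon H\to C$ of cyclic filtered $A_{\infty}$ homotopy equivalences, both with their sum-over-trees description. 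By \cite{F1} Section 9, the $t$-derivatives of these data assemble into operations $\frak c^{t,\text{\rm can}}_{k,\beta}$ making $(H,\langle\cdot\rangle,\{\frak m^{t,\text{\rm can}}_{k,\beta}\},\{\frak c^{t,\text{\rm can}}_{k,\beta}\})$ a pseudo-isotopy of cyclic filtered $A_{\infty}$ algebras, and $\frak f^t$ satisfies the differential equation of \cite{F1} Section 9 relating $\frac{d}{dt}\frak f^t$ to $\frak c^t$, $\frak c^{t,\text{\rm can}}$ and $\frak f^t$.

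Next I would set, in analogy with Definition \ref{m-1can},
\[
\frak m^{t,\text{\rm can}}_{-1,\beta}=\sum_{\Gamma\in Gr^-(0,\beta)}\frac{\frak m^t(\Gamma)}{\#\text{\rm Aut}(\Gamma)},
\]
where $\frak m^t(\Gamma)$ is built from the vertex operations $\frak m^t_{\ell,\beta(v)}$ and the edge maps $\frak f^t_{\Gamma_i}$ exactly as in Definition \ref{mgamma}; this is a smooth real-valued function of $t$, and at $t=0,1$ it reduces to Definition \ref{m-1can} for the respective endpoint algebras. It then remains to verify (\ref{dervativemt}) for $\{\frak m^{t,\text{\rm can}}_{-1,\beta}\}$ and $\{\frak c^{t,\text{\rm can}}_{0,\beta}\}$. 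Differentiating the tree sum, $\frac{d}{dt}\frak m^t(\Gamma)$ splits into terms where $\frac{d}{dt}$ hits a vertex operation $\frak m^t_{\ell,\beta(v)}$, which I would rewrite using the pseudo-isotopy equation (\ref{isotopymaineq}) in terms of $\frak c^t$ and $\frak m^t$, and terms where it hits an edge map $\frak f^t_{\Gamma_i}$, which I would rewrite using the differential equation for $\frak f^t$. Sorting the resulting contributions by the flag carrying the distinguished insertion, and using cyclic symmetry together with the inhomogeneous cyclic identity (\ref{inhomomainformula}) for $\frak c^t$, the whole sum reorganizes precisely as in the proof of Theorem \ref{pseudoisowelldef} and in Lemmas \ref{mne01sum}--\ref{m01sum}: most terms cancel telescopically between adjacent vertices, the contributions attached to would-be exterior vertices vanish (the Maurer-Cartan equation is now applied with $b=0$, so they are trivially zero), and the relation $\#C_0^{\text{\rm int}}(\Gamma)-\#C_1^{\text{\rm int}}(\Gamma)=1$ together with the propagator identity $\frak m_{1,0}\circ G=-G\circ\frak m_{1,0}+\Pi-\text{\rm identity}$ of \cite{F1} Lemma 10.1 turns the remaining piece into exactly $-\sum_{\beta_1+\beta_2=\beta}\langle\frak c^{t,\text{\rm can}}_{0,\beta_1}(1),\frak m^{t,\text{\rm can}}_{0,\beta_2}(1)\rangle$. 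This establishes (\ref{dervativemt}), so $(H,\langle\cdot\rangle,\{\frak m^{t,\text{\rm can}}_{k,\beta}\},\{\frak c^{t,\text{\rm can}}_{k,\beta}\},\{\frak m^{t,\text{\rm can}}_{-1,\beta}\})$ is the desired pseudo-isotopy of inhomogeneous cyclic filtered $A_{\infty}$ algebras between the two canonical models.

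I expect the last step to be the main obstacle: one has to identify precisely which tree contributions survive the telescoping, keep the signs consistent with the degree conventions, and confirm that the single surviving term is the $\frak c^{t,\text{\rm can}}_{0}$--$\frak m^{t,\text{\rm can}}_{0}$ pairing required by (\ref{dervativemt}). An alternative that shifts the burden is to \emph{define} $\frak m^{t,\text{\rm can}}_{-1,\beta}$ by integrating (\ref{dervativemt}) downward from $t=1$, as in Lemma \ref{pisoextlem}, so that the compatibility equation holds by construction, and then to prove only that the value at $t=0$ agrees with Definition \ref{m-1can} for the $t=0$ algebra; but unwinding that single consistency statement reproduces the same tree identity, so the combinatorial core cannot be avoided.
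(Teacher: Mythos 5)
Your overall architecture is the route the paper intends (the paper omits the proof, saying only that it follows from the proof of Theorem \ref{canmainth} together with \cite{F1} Section 9): run the canonical model construction in the family $t\in[0,1]$, which works because $\frak m^t_{1,0}$ is $t$-independent so $G$ and $\Pi$ can be fixed, obtain the homogeneous pseudo-isotopy $\{\frak m^{t,\text{\rm can}}_{k,\beta}\},\{\frak c^{t,\text{\rm can}}_{k,\beta}\}$ from \cite{F1} Section 9, and supply the constant term by a tree sum. But there is a genuine gap in your key step. As Theorem \ref{canmainth} forces (its proof, via Lemmas \ref{mne01sum} and \ref{m01sum}, computes the $\Psi'$-part of $\Psi(\frak f_*(b))$, while the constant term $\sum_{\beta}T^{E(\beta)}\frak m_{-1,\beta}$ of $C$ passes through untouched), the constant term of the canonical model must contain the original one: $\frak m^{\text{\rm can}}_{-1,\beta}=\frak m_{-1,\beta}+\sum_{\Gamma}\frak m(\Gamma)/\#\text{\rm Aut}(\Gamma)$, the first summand being the one-vertex-tree contribution which the pairing formula of Definition \ref{mgamma} cannot produce. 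Correspondingly the connecting family must be $\frak m^{t,\text{\rm can}}_{-1,\beta}=\frak m^t_{-1,\beta}+\sum_{\Gamma}\frak m^t(\Gamma)/\#\text{\rm Aut}(\Gamma)$. Your definition drops $\frak m^t_{-1,\beta}$, and your verification of (\ref{dervativemt}) never invokes the hypothesis (\ref{dervativemt}) for the given pseudo-isotopy on $C$; so the assumption that the isotopy is \emph{inhomogeneous} is never used, which already signals that something is off.

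Concretely, the pure tree sum does not satisfy (\ref{dervativemt}). When you differentiate a tree and move $\frak m_{1,0}$ across the propagators using $\frak m_{1,0}\circ G=-G\circ\frak m_{1,0}+\Pi-\text{\rm id}$, you get, besides the $\Pi$-terms that assemble into $-\sum_{\beta_1+\beta_2=\beta}\langle \frak c^{t,\text{\rm can}}_{0,\beta_1}(1),\frak m^{t,\text{\rm can}}_{0,\beta_2}(1)\rangle$, also the ``identity''-terms, which are chain-level pairings $\langle \frak c^t_{0,\beta_1}(1),\frak m^t_{0,\beta_2}(1)\rangle$; these do not vanish in general, and they are exactly what (\ref{dervativemt}) on $C$ identifies with $-\frac{d}{dt}\frak m^t_{-1,\beta}$, i.e.\ they cancel only after the summand $\frak m^t_{-1,\beta}$ is included. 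The lowest decomposable energy already shows this: for $\beta=2\beta_1$ with $\beta_1$ of minimal energy, the only tree with an edge contributes $\frac{1}{2}\langle \frak m^t_{0,\beta_1},G\frak m^t_{0,\beta_1}\rangle$; since (\ref{isotopymaineq}) gives $\frac{d}{dt}\frak m^t_{0,\beta_1}=\frak m_{1,0}(\frak c^t_{0,\beta_1})$ and $\frak m_{1,0}G\frak m^t_{0,\beta_1}=\Pi\frak m^t_{0,\beta_1}-\frak m^t_{0,\beta_1}$, its $t$-derivative is, up to sign, $\langle \frak c^t_{0,\beta_1},\Pi\frak m^t_{0,\beta_1}\rangle-\langle \frak c^t_{0,\beta_1},\frak m^t_{0,\beta_1}\rangle$: the first term is the required canonical pairing, the second is generically nonzero and is killed precisely by $\frac{d}{dt}\frak m^t_{-1,2\beta_1}$. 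With this correction (and with your sign bookkeeping carried out), your plan is exactly the ``straightforward analog'' the paper has in mind.
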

We omit the proof since it is a straightforward analog and we do not use Theorem \ref{canpseudoisot} 
in this paper.
\section{Wall crossing formula}
\label{wallcrosssec}
In this section we prove Theorem \ref{wallcross}.
We first review the definition of the number (\ref{wallcrossformula}) in more detail.
\par
We remark that (\ref{wallcrossformula}) is a rational number since we can use 
multi (but finitely many) valued section of $\mathcal M_1^{\text{\rm cl}}(\alpha;\mathcal J)$
to define it. (The argument to do so is the same as \cite{FO}.)
\par
On the other hand, to prove Theorem \ref{wallcross} we need to choose a perturbation 
of  $\mathcal M_1^{\text{\rm cl}}(\alpha;\mathcal J)$ so that 
it is compatible with one in 
$\mathcal M_k(\beta;\mathcal J)$. 
Here
\begin{equation}\label{famidiscmod}
\mathcal M_k(\beta;\mathcal J)
= \bigcup_{t \in [0,1]} \{t\} \times \mathcal M_k(\beta;J_t).
\end{equation}
Since we use {\it continuous family of} multi-sections to perturb 
$\mathcal M_k(\beta;\mathcal J)$, we need to use continuous family of multi-sections
also for $\mathcal M_1^{\text{\rm cl}}(\alpha;\mathcal J)$.
Actually this is the way taken in \cite{F1} Sections 3 and 5.
\par
There exists a Kuranishi structure and continuous family of multi-sections 
on $\mathcal M_1^{\text{\rm cl}}(\alpha;\mathcal J)$ with the following properties:
\par\medskip
\begin{enumerate}
\item The evaluation map
\begin{equation}\label{evaluationtint}
(ev_t,ev^{\text{\rm int}}) : \mathcal M_1^{\text{\rm cl}}(\alpha;\mathcal J) 
\to [0,1] \times M
\end{equation}
is weakly submersive. 
\item 
Continuous family of multi-sections is transversal to $0$ and 
(\ref{evaluationtint}) induces submersion on its zero set. 
\item
The image of the restriction of $(ev_t,ev^{\text{\rm int}})$ to the zero set of 
continuous family of multi-sections is disjoint from 
$\{0,1\} \times L$.
\end{enumerate}
\par\smallskip
This is Lemmas 3.2 and 5.3 of \cite{F1}.
Let $\text{\rm tri} : \mathcal M_1^{\text{\rm cl}}(\alpha;\mathcal J)
\to \text{\rm pt}$ be the trivial map. We use the above continuous family of multisections 
and define
\begin{equation}\label{closedcorr}
\text{\rm Corr}(\mathcal M_1^{\text{\rm cl}}(\alpha;\mathcal J);\text{\rm tri},ev^{\text{\rm int}})(1)
\in \Lambda(M).
\end{equation}
(\ref{closedcorr}) is a smooth differential form of degree
$$
\dim_{\R}M - \dim_{\R}\mathcal M_1^{\text{\rm cl}}(\alpha;\mathcal J)
= 6 - (6+ c^1(M) \cap [\alpha] + 2 - 6 + 1) = 3.
$$ 
\begin{dfn} We put:
$$
n(L;\alpha;\mathcal J) 
= \int_L \text{\rm Corr}(\mathcal M_1^{\text{\rm cl}}(\alpha;\mathcal J);\text{\rm tri},ev^{\text{\rm int}})(1)
\in \R.
$$
We also define:
$$
n(L;\alpha;\mathcal J;t) 
= \int_L \text{\rm Corr}(\mathcal M_1^{\text{\rm cl}}(\alpha;\mathcal J)
\cap ev_t^{-1}([0,t]);\text{\rm tri},ev^{\text{\rm int}})(1)
\in \R.
$$
\end{dfn}
The submersivity of $(ev_t,ev^{\text{\rm int}})$ implies that 
$n(L;\alpha;\mathcal J;t)$ is a smooth function of $t$.
\begin{thm}\label{thm61}
In the situation of Theorem \ref{wallcross}, 
$(\Lambda(L),\langle\cdot\rangle,\{\frak m_{k,\beta}^{J_0}\},\{\frak m_{-1,\beta}^{J_0}\})$
is pseudo-isotopic to 
$(\Lambda(L),\langle\cdot\rangle,\{\frak m_{k,\beta}^{J_1}\},\{\frak m_{-1,\beta}^{J_1} + \Delta(\beta)\})$
as inhomogeneous gapped cyclic filtered $A_{\infty}$ algebras.
Here
$$
\Delta(\beta) = \sum_{\tilde{\beta}: i_*(\tilde{\beta}) = \beta}n(L;\tilde{\beta};\mathcal J).
$$
\end{thm}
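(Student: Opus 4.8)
The plan is to construct the pseudo-isotopy using the parametrized moduli spaces $\mathcal M_k(\beta;\mathcal J)$ of \eqref{famidiscmod} together with the parametrized closed moduli spaces $\mathcal M_1^{\text{\rm cl}}(\alpha;\mathcal J)$, exactly as in the construction of $\{\frak m_{k,\beta}^t\}$, $\{\frak c_{k,\beta}^t\}$ in Section \ref{geomsec} (that is, decomposing the correspondence $\text{\rm Corr}_*([0,1]\times\mathcal M_{k+1}(\beta;J);\ldots)$ as $\rho(t)+dt\wedge\sigma(t)$ and setting $\frak m_{k,\beta}^t=\rho(t)$, $\frak c_{k,\beta}^t=\sigma(t)$). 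The only genuinely new ingredient is the definition of $\frak m_{-1,\beta}^t$. Here the crucial point is that the decomposition \eqref{genedecompdsi0} of $\partial\mathcal M_0(\beta)$ now has a \emph{nonempty} second term, because along a generic path $\mathcal J$ holomorphic spheres meeting $L$ appear. So one first takes, by the analogue of Lemmas 3.2 and 5.3 of \cite{F1}, a system of Kuranishi structures and continuous families of multisections on $[0,1]\times\mathcal M_0(\beta;J)$ and on $\mathcal M_1^{\text{\rm cl}}(\tilde\beta;\mathcal J)$ that are mutually compatible along the boundary stratum $\mathcal M_1^{\text{\rm cl}}(\tilde\beta)\,{}_{ev_0^{\text{\rm int}}}\times_M L$, then writes $\text{\rm Corr}_*([0,1]\times\mathcal M_0(\beta;J);\text{\rm tri},ev_t)(1)=\rho(t)+dt\wedge\sigma(t)$ and sets $\frak m_{-1,\beta}^t=\rho(t)$.

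The key step is then the Stokes computation for the modified identity \eqref{dervativemt}. Applying Stokes' formula (\cite{F1} Proposition 4.2) to the closed $1$-form $ev_t^*(dt)$ on the zero set of the multisections over $[t_1,t_2]\times\mathcal M_0(\beta;J)$, the boundary now contributes three types of terms: the endpoints $\{t_1,t_2\}\times\mathcal M_0(\beta)$, the disc-bubbling strata $(\mathcal M_1(\beta_1)\times_L\mathcal M_1(\beta_2))/\Z_2$ which yield the $\langle\frak c_{0,\beta_1}^t(1),\frak m_{0,\beta_2}^t(1)\rangle$ term as in the proof of Proposition \ref{pseudohomoinvmodT}, and the new sphere-bubbling strata $\mathcal M_1^{\text{\rm cl}}(\tilde\beta;\mathcal J)\times_M L$ which integrate to $\frac{d}{dt}n(L;\tilde\beta;\mathcal J;t)$, using that $n(L;\tilde\beta;\mathcal J;t)$ is a smooth function of $t$ and that its $t$-derivative picks out the slice integral. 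Differentiating in $t_2$ and summing over $\tilde\beta$ with $i_*(\tilde\beta)=\beta$ then gives
\begin{equation}
\frac{d}{dt}\frak m_{-1,\beta}^t + \sum_{\beta_1+\beta_2=\beta}\langle\frak c_{0,\beta_1}^t(1),\frak m_{0,\beta_2}^t(1)\rangle + \frac{d}{dt}\sum_{i_*(\tilde\beta)=\beta} n(L;\tilde\beta;\mathcal J;t) = 0,
\end{equation}
so that $\frak m_{-1,\beta}^t + \sum_{i_*(\tilde\beta)=\beta}n(L;\tilde\beta;\mathcal J;t)$ satisfies \eqref{dervativemt}. Replacing $\frak m_{-1,\beta}^t$ by this corrected quantity produces a genuine pseudo-isotopy of inhomogeneous cyclic filtered $A_\infty$ algebras modulo $T^{E_0}$; at $t=0$ the $n$-term vanishes since $n(L;\tilde\beta;\mathcal J;0)=0$, and at $t=1$ it equals $\Delta(\beta)=\sum_{i_*(\tilde\beta)=\beta}n(L;\tilde\beta;\mathcal J)$.

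Finally, one passes from the modulo $T^{E_0}$ statement to the full statement by the same extension procedure as in Section \ref{geomsec}: run the construction for an increasing sequence $E_i\to\infty$ and patch using Lemma \ref{pisoextlem}, noting that the compatibility of the Kuranishi data with the forgetful maps and with the boundary decompositions \eqref{bdcompati0}, \eqref{partM0betadecomp}, \eqref{genedecompdsi0} is preserved throughout. The main obstacle I anticipate is the construction and compatibility of the continuous families of multisections simultaneously on $[0,1]\times\mathcal M_0(\beta;J)$ and on $\mathcal M_1^{\text{\rm cl}}(\tilde\beta;\mathcal J)$ along their common boundary stratum, while also keeping $ev_t$ a submersion on the relevant zero sets so that the slice integrals defining $n(L;\tilde\beta;\mathcal J;t)$ vary smoothly; this is the parametrized analogue of the construction behind \eqref{genedecompdsi0}, and it is where the hypothesis that $J_0,J_1$ satisfy Assumption \ref{assumpJ} (hence no sphere bubbling over the endpoints) is essential.
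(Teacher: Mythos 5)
Your proposal is correct and takes essentially the same route as the paper: Kuranishi structures and continuous families of multisections on the parametrized spaces $\mathcal M_k(\beta;\mathcal J)$ and $\mathcal M_1^{\text{\rm cl}}(\tilde\beta;\mathcal J)$ compatible along the sphere-bubbling stratum, $\frak m^t_{k,\beta}$, $\frak c^t_{k,\beta}$ from the correspondence, $\frak m^t_{-1,\beta}$ corrected by $\sum_{\tilde\beta} n(L;\tilde\beta;\mathcal J;t)$, verification of (\ref{dervativemt}) via Stokes on the boundary decomposition containing the extra closed-sphere stratum, and the usual passage from modulo $T^{E_0}$ to the full statement. The only cosmetic differences are that the paper builds the correction term into the definition of $\frak m^t_{-1,\beta}$ from the start rather than adding it afterwards, and that the relevant moduli space is the parametrized $\mathcal M_0(\beta;\mathcal J)$ rather than the product $[0,1]\times\mathcal M_0(\beta;J)$ appearing in your notation.
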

\begin{proof}
We consider the moduli space 
(\ref{famidiscmod}) and evaluation map
$$
(ev_t,ev) = (ev_t,ev_0,\ldots,ev_{k-1}) 
: \mathcal M_k(\beta;\mathcal J) \to [0,1] \times L^k.
$$
By \cite{F1} Section 11 we have a system of Kuranishi structures and 
families of multisections on $\mathcal M_k(\beta;\mathcal J)$ 
for $\beta \cap \omega < E_0$, with the following properties:
\par\medskip
\begin{enumerate}
\item The families of multisections are transversal to $0$.
\item They are compatible with the forgetful map 
\begin{equation}
\frak{forget}_{k,0} : \mathcal M_k(\beta;\mathcal J)
\to \mathcal M_0(\beta;\mathcal J).
\end{equation}
\item For $k \ge 1$ the evaluation map 
$$
(ev_t,ev_0) : \mathcal M_k(\beta;\mathcal J) \to [0,1] \times L
$$
is weakly submersive and
induces a submersion of the zero 
set of family of multisections, in the sense of \cite{F1} Definition 4.1.4. 
\item They are invariant under the cyclic permutation of the 
boundary marked points.
\item They are compatible with the identification (\ref{bdcompati0}).
\item We consider the decomposition:
\begin{equation}\label{paraJdecomp}
\aligned
\partial\mathcal M_{0}(\beta;\mathcal J) = &\bigcup_{\beta_1+\beta_2=\beta}
\left(\mathcal M_{1}(\beta_1;\mathcal J)\, {}_{(ev_t,ev_0)} \times_{(ev_t,ev_0)} \mathcal M_{1}(\beta_2;\mathcal J)
\right)/\Z_2 \\
&\cup \bigcup_{t\in [0,1]}\bigcup_{\tilde{\beta}:i_*(\tilde{\beta}) = \beta}
\{t\} \times \left(
\mathcal M_1^{\text{cl}}(\tilde{\beta};J_t) {}_{ev_0}\times_M L
\right).
\endaligned
\end{equation}
Then the Kuranishi structures and the families of multisections are 
compatible with (\ref{paraJdecomp}).
We use the Kuranishi structure and families of multisections on  $\mathcal M_1^{\text{cl}}(\tilde{\beta};J_t)$ 
which is explained in this section 
for the second term of the right hand side of (\ref{paraJdecomp}).
\item The evaluation map,
$
ev_t : \mathcal M_0(\beta;\mathcal J) \to [0,1]
$
is  weakly submersive and
 induces a submersion of the zero 
set of family of multisections, in the sense of \cite{F1} Definition 4.1.4. 
\item 
At $t_0=0,1$ the induced Kuranishi structure and families of multisecitons on 
$\mathcal M_k(\beta;\mathcal J) \cap ev_t^{-1}(\{t_0\})$ coincides with  
given choices Kuranishi structures and families of multisecitons on 
$\mathcal M_k(\beta;J_{t_0})$.
\end{enumerate}
\par\smallskip
This is mostly the same as one we used in the proof of Proposition
\ref{pseudohomoinvmodT}. The only difference is the second term of (\ref{paraJdecomp}).
It appears since the fiber product 
$\mathcal M_1^{\text{cl}}(\tilde{\beta};\mathcal J) {}_{ev_0}\times_M L$ 
can be nonempty in the situation where we consider one parameter family of complex structures.
\par
We now define $\frak m_{k,\beta}^t$, $\frak c_{k,\beta}^t$ for $k\ge 0$ in the same way 
as (\ref{46}), (\ref{pseudoisoshiki}) using $\mathcal M_k(\beta;\mathcal J)$ 
in place of $[0,1] \times \mathcal M_k(\beta;J)$.
\par
We finally define $\frak m^t_{-1,\beta}$ as follows.
We put:
\begin{equation}
\text{\rm Corr}_*(\mathcal M_0(\beta;\mathcal J);\text{\rm tri},ev_t)(1) 
=\rho(t) + dt \wedge \sigma(t)
\end{equation}
and define
\begin{equation}
\frak m^t_{-1,\beta} = \rho(t) + \sum_{\tilde{\beta}: i_*(\tilde{\beta}) = \beta} 
n(L;\tilde{\beta};\mathcal J;t).
\end{equation}
We can prove 
$(\Lambda(L),\langle\cdot\rangle,\{\frak m_{k,\beta}^{t}\},
\{\frak c_{k,\beta}^{t}\})$
is a pseudo-isotopy of gapped cyclic filtered $A_{\infty}$ algebra mod $T^{E_0}$
in the same way as \cite{F1} Section 11.
\par
To prove 
$(\Lambda(L),\langle\cdot\rangle,\{\frak m_{k,\beta}^{t}\},
\{\frak c_{k,\beta}^{t}\},\{\frak m_{-1,\beta}^{t}\})$
is an {\it inhomogeneous} pseudo-isotopy of gapped cyclic filtered $A_{\infty}$ algebra mod $T^{E_0}$
it suffices to prove (\ref{dervativemt}).
Let $0 \le t_1 < t_2 \le 1$. We have:
\begin{equation}\label{paraJdecomp2}
\aligned
&\partial\left(\mathcal M_{0}(\beta;\mathcal J) \cap ev_t^{-1}([t_1,t_2]) \right)\\
=& \left(\{ t_1 \} \times \mathcal M_{0}(\beta;J_{t_1})\right) 
\cup \left(\{ t_2 \} \times \mathcal M_{0}(\beta;J_{t_2}) \right) \\
&\cup \bigcup_{\beta_1+\beta_2=\beta}
\frac{\left(\mathcal M_{1}(\beta_1;\mathcal J)\, {}_{(ev_t,ev_0)} \times_{(ev_t,ev_0)} \mathcal M_{1}(\beta_2;\mathcal J)
\right)\cap ev_t^{-1}([t_1,t_2])}{\Z_2} \\
&\cup \bigcup_{t\in [t_1,t_2]}\bigcup_{\tilde{\beta}: i_*(\tilde{\beta}) = \beta}
\{t\} \times \left(
\mathcal M_1^{\text{cl}}(\tilde{\beta};J_t) {}_{ev_0}\times_M L
\right).
\endaligned
\end{equation}
We apply Stokes' theorem (\cite{F1} Proposition 4.2) to obtain:
\begin{equation}\label{formulaform}
\frak m_{-1,\beta}^{t_2} - \frak m_{-1,\beta}^{t_1}
= \sum_{\beta_1+\beta_2 = \beta}
\int_{t_1}^{t_2} \langle \frak c_{0,\beta_1}^t(1), 
\frak m_{0,\beta_2}^t(1) \rangle dt.
\end{equation}
Here the sum of the 1st and 3rd terms of (\ref{paraJdecomp2}) 
gives the left hand side of (\ref{formulaform}).
\par 
We obtain (\ref{dervativemt}) by differentiating (\ref{formulaform}).
\par
We remark 
$$
\frak m_{-1,\beta}^1 = \frak m_{-1,\beta}^{J_1} + 
\sum_{\tilde{\beta}: i_*(\tilde{\beta}) = \beta} n(L;\tilde{\beta};J).
$$
The proof of Theorem \ref{thm61} is complete.
(Actually we need to go from modulo $T^{E_0}$ version 
to Theorem \ref{thm61} itself. We omit this part since it is 
the same as one for Theorems \ref{superpotential} and \ref{inhomoexistmain}.)
\end{proof}
\section{Convergence}
\label{conv}
In this section we prove Theorem \ref{convmain}.
Actually most of the ideas of the proof is in \cite{F1} Section 13.
Let $b = \sum_{i=1}^{b_1} x_i \text{\bf e}_i$, where $\text{\bf e}_i$ 
is a basis of $H^1(L;\R)$. We put
$y_i = e^{x_i}$. For $\beta \in H_2(X,L;\Z)$ we define $\partial_i\beta \in \Z$ by
$\partial\beta = \sum_{i=1}^{b_1}\partial_i\beta \text{\bf e}_i$ and define
\begin{equation}
y^{\partial \beta} = \prod_{i=1}^{b_1} y_i^{\partial_i\beta}.
\end{equation}
\begin{thm}\label{strPsi}
We regard the superpotential $\Psi(b;J)$ as a function of $x_i$ then we have:
\begin{equation}
\Psi(b;J) = \sum_{\beta \in G} T^{\beta\cap[\omega]} \frak m_{-1,\beta}^J y^{\partial \beta}.
\end{equation}
\end{thm}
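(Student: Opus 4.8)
The plan is to compute $\Psi(b;J)$ via the canonical model and show that every nonzero contribution is forced, by the Maslov-index-zero hypothesis and the three-dimensionality of the cyclic $A_\infty$ algebra, to come only from the $\frak m_{-1,\beta}$ terms, with the $y^{\partial\beta}$ weight appearing from how $b = \sum x_i \text{\bf e}_i$ enters. First I would pass to the canonical model $(H,\langle\cdot\rangle,\{\frak m^{\text{\rm can}}_{k,\beta}\},\{\frak m^{\text{\rm can}}_{-1,\beta}\})$, using Theorem \ref{canmainth}, so that $\Psi(b;J) = \Psi^{\text{\rm can}}(b) = \sum_{\beta} T^{E(\beta)}\frak m^{\text{\rm can}}_{-1,\beta}$ once one observes that $H = H^*(L;\R)$ and, for $b \in H^1$, the only way the tree expression $\frak m(\Gamma;b)$ can be nonzero while producing something paired against itself in degree the dictates of dimension $3$ allow is via the $b$-independent pieces. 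Concretely, I would invoke the remark after Theorem \ref{canmainth}: in the treatment there with $H^1(L;\R)=0$ the invariant equals $\sum_\beta T^{E(\beta)}\frak m^{\text{\rm can}}_{-1,\beta}$; here $H^1$ need not vanish, but the extra $y_i$ dependence is exactly the content of the theorem.

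Next I would track the $x_i$-dependence. Each occurrence of $b = \sum_i x_i \text{\bf e}_i$ as an input to some $\frak m_{k,\beta(v)}$ in the tree formula contributes a monomial in the $x_i$; the key point — this is where I expect the main work — is that cyclic symmetry together with $\deg'\text{\bf e}_i = 0$ and the $A_\infty$/Maurer–Cartan relations force the dependence on $b$ through each disc class $\beta$ to organize into the single exponential factor $\prod_i y_i^{\partial_i\beta} = \prod_i e^{\partial_i\beta\, x_i}$. The mechanism is the standard one (as in \cite{F1} Section 13): summing over the number of marked points inserted on a disc of class $\beta$, with the combinatorial weight $1/(k+1)$ from \eqref{superpoteprime} absorbed by the choice of which marked point is the output, reconstitutes $\exp(\sum_i \partial_i\beta\, x_i)$ from its Taylor series $\sum_k \frac{1}{k!}(\ldots)^k$, precisely because $\partial\beta \in H_1(L;\Z)$ pairs with the $\text{\bf e}_i$ via $\partial_i\beta$. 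I would make this precise on the canonical model, where $\frak m^{\text{\rm can}}_{k,\beta}$ for $\beta\neq 0$ with all inputs in $H^1$ and $k\geq 1$ reduces, by degree reasons in dimension $3$, to a multiple of the fundamental class determined by the linking/intersection data encoded in $\partial\beta$.

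The remaining step is to check that no genuinely new terms appear: the $\frak m^{\text{\rm can}}_{k,0}$ (classical cup product part) contributes nothing to $\Psi'$ beyond what is already gauge-trivial, and the constant-term corrections $\frak m^{\text{\rm can}}_{-1,\beta}$ carry the weight $y^{\partial\beta}$ with $\partial\beta$ computed as above — for $\beta$ with $\partial\beta = 0$ this is just $\frak m^{\text{\rm can}}_{-1,\beta}$ itself, consistent with the $H^1 = 0$ case in the remark. I expect the main obstacle to be bookkeeping: verifying that the signs are all trivial (here they are, since everything lives in even shifted degree, as already noted after Definition \ref{mgamma}), and that the formal manipulation turning $\sum_k \frac{1}{k+1}\langle\frak m_{k,\beta}(b,\dots,b),b\rangle$ into $T^{\beta\cap\omega}\frak m^J_{-1,\beta}\,y^{\partial\beta}$ converges in the $T$-adic topology and matches the strongly convergent Laurent series statement of Theorem \ref{convmain}(1). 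Once the exponential resummation is established class-by-class in $\beta$, summing over $\beta \in G$ gives the asserted formula $\Psi(b;J) = \sum_{\beta\in G} T^{\beta\cap[\omega]}\frak m^J_{-1,\beta}\, y^{\partial\beta}$.
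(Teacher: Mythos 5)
Your proposal correctly anticipates the final resummation (the $1/(k+1)$ weights combining with a per-marked-point factor $b\cap\partial\beta$ to produce $e^{\sum_i\partial_i\beta\,x_i}=y^{\partial\beta}$), and you cite the right source for the mechanism (\cite{F1} Section 13). But the route you take to justify it has two genuine gaps. First, you want to derive the key per-class identity from ``cyclic symmetry, $\deg'\text{\bf e}_i=0$, the $A_\infty$/Maurer--Cartan relations'' and ``degree reasons in dimension 3.'' That cannot work: degree counting only tells you that $\frak m_{k,\beta}(b,\ldots,b)$ has the right degree to be paired with $b$; it does not relate the resulting number to $\frak m_{-1,\beta}$ at all. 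Indeed, for an abstract inhomogeneous cyclic filtered $A_\infty$ algebra of dimension $3$ the statement of the theorem is simply false --- the operations $\frak m_{k,\beta}$ and the numbers $\frak m_{-1,\beta}$ are independent data constrained only by (\ref{isotopymaineq})--(\ref{dervativemt}). What the paper actually uses is geometric: for closed one-forms $\rho$, the compatibility of the Kuranishi structures and continuous families of multisections with the forgetful map $\mathcal M_{k+1}(\beta;J)\to\mathcal M_0(\beta;J)$ (property (2) in Section \ref{geomsec}) yields, as in Lemma 13.1 of \cite{F1}, the divisor-type identity $\langle\frak m^J_{k,\beta}(\rho,\ldots,\rho),\rho\rangle=\frac{1}{k!}(\rho\cap\partial\beta)^{k+1}\frak m^J_{-1,\beta}$, after which the exponential resummation is immediate. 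This forgetful-map input is the heart of the proof and is absent from your argument.

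Second, the detour through the canonical model is both unnecessary and problematic. The identification $\Psi(\frak f_*(b))=\Psi^{\text{\rm can}}(b)$ of Theorem \ref{canmainth} rests on Proposition \ref{canmaincanprp}, which assumes $b\in\widetilde{\mathcal M}(H;\Lambda_+)$; but Theorem \ref{strPsi} is an identity of functions of arbitrary $(x_1,\ldots,x_{b_1})$ --- it must hold off the critical locus, since it is precisely what is used in Theorem \ref{convmain} to extend $\Psi$ and then locate its critical points. So you may not invoke the Maurer--Cartan equation anywhere. Relatedly, your dismissal of the $\beta=0$ terms as ``gauge-trivial'' is unjustified on the canonical model, where $\frak m^{\text{\rm can}}_{k,0}$ are Massey-product-type operations that need not vanish on $H^1$; on the de Rham complex, by contrast, the $\beta=0$ contribution vanishes trivially for a closed one-form ($\frak m_{1,0}b=db=0$, $b\wedge b=0$, and $\frak m_{k,0}=0$ for $k\ge 3$), which is one more reason the paper works directly with $\{\frak m^J_{k,\beta}\}$ on $\Lambda(L)$ rather than with the canonical model.
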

Theorem \ref{convmain}.1 follows immediately from Theorem \ref{strPsi}.
\begin{proof}
Let $\rho$ be a closed one form on $L$. By definition we have
$$\aligned
&\langle \frak m_{k,\beta}^J(\rho,\ldots,\rho),\rho\rangle \\
&= \text{\rm Corr}(\mathcal M_k(\beta;J);(ev_1,\ldots,ev_k,ev_0),\text{\rm tri})(\rho\times\cdots 
\times \rho)
\in \Lambda^0(\text{\rm pt}) = \R.
\endaligned$$
Then, by the same argument as the proof of Lemma 13.1 \cite{F1}, we have
$$
\langle \frak m_{k,\beta}^J(\rho,\ldots,\rho),\rho\rangle 
= \frac{1}{k!}(\rho \cap \partial \beta)^{k+1}\frak m_{-1,\beta}^J.
$$
Theorem \ref{strPsi} follows easily.
\end{proof}
\par\smallskip
We turn to the proof of Theorem \ref{convmain}.2.
We take a Weinstein neighborhood $U$ of $L$.
Namely $U$ is symplectomorphic to a neighborhood $U'$ of zero section 
in $T^*L$. 
We choose $\delta_1$ so that for $c = (c_1,\ldots,c_{b_1}) \in [-\delta_1,+\delta_1]^{b_1}$ 
the graph of the closed one form $\sum_{i=1}^{b_1} c_i \text{\bf e}_i$ is contained in $U'$. 
We send it by the symplectomorphism to $U$ and denote it by 
$L(c)$.
We may take $\delta_2 < \delta_1$ such that if 
$c = (c_1,\ldots,c_{b_1}) \in [-\delta_2,+\delta_2]^{b_1}$ then there exists a diffeomorphism 
$F_{c} : M \to M$ such that 
\begin{eqnarray}
&&F_c(L) = L(c), \label{73}\\
&&\text{$(F_c)_*J$ is tamed by $\omega$.} \label{74}
\end{eqnarray}
Then we have an isomorphism
\begin{equation}\label{LcandL}
\mathcal M_0(L(c);(F_c^{-1})^*(\beta),(F_c)_*J)
\cong 
\mathcal M_0(L;\beta,J).
\end{equation}
We can extend this isomorphism to their Kuranishi structures and 
family of multisections on them. We can then use Proposition \ref{canmaincanprp}
and (\ref{LcandL})
to obtain:
\begin{equation}
\frak m^J_{-1,\beta,L} = \frak m^{(F_c)_*J}_{-1,\beta,L(c)}.
\end{equation}
Here we include $L$ and $L(c)$ in the notation 
to clarify the Lagrangian submanifold we study.
Theorem \ref{strPsi} and \cite{F1} Lemma 13.5 then implies:
\begin{equation}\label{suppotidentity}
\Psi(y;L(c);(F_c)_*(J)) = \Psi(y(c);L;J),
\end{equation}
where we put
$
y(c)_i = T^{-c_i\partial_i\beta}y_i.
$
In (\ref{suppotidentity}) we include $L$ in the notation of superpotential 
to clarify the Lagrangian submanifold we study. 
We regard superpotential as a function of $y_i$ by using Theorem \ref{strPsi}.
\par
Since the right hand side converges in 
$\Lambda\langle\!\langle y_1,\ldots,y_{b_1},y_1^{-1},\ldots,y_{b_1}^{-1}\rangle\!\rangle$,
it follows that $\Psi(y(c);L;J)$ converges for $c=(c_1,\ldots,c_{b_1})$ with $\vert c_i\vert < \delta$.
This implies Theorem \ref{strPsi}.2.
\par
3 and 4 of Theorem \ref{strPsi} follows from Theorem \ref{superpotential}.
The proof of Theorem \ref{strPsi} is complete.
\qed
\par\medskip
Once the convergence is established, Propositions \ref{prp21}, \ref{gginvariance} and Theorems 
\ref{pseudoisowelldef}, \ref{inhomoexistmain}, \ref{canmainth} 
are generalized in the same way to our larger domain of convergence.
\section{Concluding remarks}
\label{DTawx}
\subsection{Rationality and integrality}
\label{Ratsubsec}
\medskip
\begin{conj}\label{conj1}
In the situation of Corollary \ref{maincor} we have
$\Psi^{\text{\rm can}}(0;J) \in \Q$.
\end{conj}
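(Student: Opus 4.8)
The invariant in question has already been written out explicitly: by the Remark following Theorem~\ref{canmainth}, in the situation of Corollary~\ref{maincor} (so $H^1(L;\R)=0$ and $\mathcal M(L;J;\Lambda_+)$ is a single point) it equals
\[
\Psi^{\text{\rm can}}(0;J)=\sum_{\beta\in G}T^{E(\beta)}\,\frak m^{\text{\rm can}}_{-1,\beta}
=\sum_{\beta\in G}\sum_{\Gamma\in Gr^-(0,\beta)}\frac{T^{E(\beta)}}{\#\text{\rm Aut}(\Gamma)}\,\frak m(\Gamma).
\]
Since this is a priori only an element of $\Lambda_0$, a Novikov series over $\R$, the assertion ``$\Psi^{\text{\rm can}}(0;J)\in\Q$'' is to be read as: $\Psi^{\text{\rm can}}(0;J)$ lies in the subring $\Lambda_0^{\Q}\subset\Lambda_0$ of Novikov series all of whose coefficients are rational; equivalently $\sum_{E(\beta)=\lambda}\frak m^{\text{\rm can}}_{-1,\beta}\in\Q$ for every $\lambda\in\R_{\ge0}$. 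So the plan is to prove that each coefficient is rational.

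The first step is to trade the algebraic quantity $\frak m^{\text{\rm can}}_{-1,\beta}$, which by Definition~\ref{m-1can} is a tree sum built from the operations $\frak m_{k,\beta(v)}$ together with the propagator and harmonic projection of the canonical model (a priori only real, Hodge-theoretic data), for the geometric disc count $\frak m^J_{-1,\beta}=\#\mathcal M_0(\beta;J)$. Indeed, by Theorem~\ref{canmainth} we have $\Psi^{\text{\rm can}}(0;J)=\Psi(\frak f_*(0);J)$, and the value of the geometric superpotential at the unique point of $\mathcal M(L;J;\Lambda_+)$ is, by Theorem~\ref{strPsi} (whose formula $\sum_\beta T^{E(\beta)}\frak m^J_{-1,\beta}y^{\partial\beta}$ reduces, when $b_1=0$, to $\sum_{\beta\in G}T^{E(\beta)}\frak m^J_{-1,\beta}$ since $y^{\partial\beta}$ is an empty product), equal to $\sum_{\beta\in G}T^{E(\beta)}\frak m^J_{-1,\beta}$. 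Comparing the two expansions coefficient by coefficient in $T$ gives $\sum_{E(\beta)=\lambda}\frak m^{\text{\rm can}}_{-1,\beta}=\sum_{E(\beta)=\lambda}\frak m^J_{-1,\beta}$ for each $\lambda$, so it suffices to prove $\frak m^J_{-1,\beta}\in\Q$ for each $\beta$.

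The second step is the rationality of $\#\mathcal M_0(\beta;J)$. The moduli space $\mathcal M_0(\beta;J)$ is compact, carries the Kuranishi structure of Section~\ref{geomsec}, and has virtual dimension $\dim L-3+\mu(\beta)=0$; its boundary (\ref{partM0betadecomp}) is assembled from fibre products of the one dimensional spaces $\mathcal M_1(\beta_i)$ over the three dimensional $L$, hence has virtual dimension $-1$ and becomes empty under a transverse perturbation. Thus a transverse multisection has zero set a finite set of points, each carrying a rational branch multiplicity and a sign, so the virtual count is rational and, by the usual cobordism argument, independent of the perturbation (cf.\ \cite{FO}, \cite{FOOO080}). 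The one genuine point to address is that $\frak m^J_{-1,\beta}=\text{\rm Corr}(\mathcal M_0(\beta;J);(\text{\rm tri},\text{\rm tri}))(1)$ is defined using a \emph{continuous family} of multisections, forced on us by compatibility with the forgetful maps and with the boundary identifications (\ref{bdcompati0}) and (\ref{partM0betadecomp}); here one observes that for a zero dimensional Kuranishi space the fibrewise weighted signed count is a locally constant, hence constant, $\Q$-valued function on the connected parameter space of the family, and that this parameter space carries a probability measure, so $\text{\rm Corr}(\mathcal M_0(\beta;J);(\text{\rm tri},\text{\rm tri}))(1)$ is exactly that single rational number. Combined with the first step this yields $\Psi^{\text{\rm can}}(0;J)\in\Lambda_0^{\Q}$.

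The main obstacle is making this last observation rigorous inside the continuous-family-of-multisections framework of \cite{F1}: one must check that the perturbation, constrained by all the compatibility requirements of Section~\ref{geomsec} (forgetful maps, cyclic symmetry, and the fibre product structures along $\partial\mathcal M_0(\beta)$), can still be chosen with all branch weights rational, and that the birth/death loci in the parameter space --- where individual members of the family fail to be transverse --- contribute in cancelling pairs and so leave the signed count unchanged. Carrying the rationality through the inductive passage from the mod $T^{E_0}$ statements to the full statement (as in the proof of Theorem~\ref{inhomoexistmain}) should then be routine, since each $\frak m^J_{-1,\beta}$ is treated individually.
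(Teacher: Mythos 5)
This statement is left as an open conjecture in the paper; there is no proof to compare against, and the paper even explains why its own methods cannot supply one: the construction of $\frak m^{J}_{-1,\beta}$ and of the whole cyclic structure is carried out over $\R$, via de Rham theory and \emph{continuous families} of multisections, precisely because single multisections cannot be made compatible with the forgetful maps, the cyclic symmetry, and the boundary identifications (\ref{bdcompati0}), (\ref{partM0betadecomp}). Your argument founders exactly at the point you flag as ``the one genuine point to address.'' The claim that the fibrewise weighted count over a zero dimensional Kuranishi space is a locally constant $\Q$-valued function of the family parameter is not correct here: $\mathcal M_0(\beta;J)$ has nonempty boundary, and the compatibility requirements force the perturbation on $\partial\mathcal M_0(\beta;J)$ to be the fiber product family coming from the one dimensional spaces $\mathcal M_1(\beta_i)$. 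Over the parameter space of the family this boundary zero set is in general nonempty, so the fibrewise count jumps, and $\text{\rm Corr}(\mathcal M_0(\beta;J);(\text{\rm tri},\text{\rm tri}))(1)$ is an honest integral of a smooth density over the parameter space --- a real number with no a priori rationality.

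A second, related gap is your reduction ``it suffices to prove $\frak m^J_{-1,\beta}\in\Q$ for each $\beta$'': the individual numbers $\frak m^J_{-1,\beta}$ are \emph{not} invariants of $(M,L,J)$. Under a change of admissible perturbation they shift by the real correction terms $\sum_{\beta_1+\beta_2=\beta}\int\langle \frak c^t_{0,\beta_1}(1),\frak m^t_{0,\beta_2}(1)\rangle\,dt$ appearing in (\ref{dervativemt}) and in the proof of Proposition \ref{pseudohomoinvmodT}; only the full superpotential evaluated on Maurer--Cartan solutions is choice independent. So even if one perturbation gave rational counts, nothing guarantees the invariant value $\Psi^{\text{\rm can}}(0;J)$ is rational unless one can run the \emph{entire} construction (all $\frak m_{k,\beta}$ and the pseudo-isotopies, with all their compatibilities) over $\Q$ --- for instance with singular chains and genuine multivalued multisections as in \cite{FOOO080}, where cyclic symmetry and forgetful-map compatibility are exactly the unsolved transversality issues. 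That is the missing idea, and it is why the statement remains a conjecture rather than a theorem.
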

We remark that filtered $A_{\infty}$ structure on $H(L)$ is 
constructed in \cite{FOOO080} over $\Lambda_{0,nov}^{\Q}$.
In \cite{F1} and in this paper, we work over $\R$ coefficient 
to use continuous family of multisections and de Rham theory 
for construction.
This is the reason why we can not prove Conjecture \ref{conj1} 
by the method of this paper.
\begin{conj}\label{conj2} There exist integers
$\frak o_{\beta}^J \in \Z$ for each $\beta \in H_2(M,L;\Z)$ such that
\begin{equation}
\Psi^{\text{\rm can}}(0;J)  
= \sum_{d \in \Z_{+} : \beta/d \in H_2(M;L;\Z)} d^{-2}\frak o_{\beta/d}^J.
\end{equation}
\end{conj}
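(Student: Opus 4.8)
The plan is to settle the rationality statement, Conjecture \ref{conj1}, first and then refine it to integrality. In the situation of Corollary \ref{maincor} one has $H^1(L;\R)=0$, so by the remark following Theorem \ref{canmainth} the invariant equals $\sum_{\beta} T^{E(\beta)}\frak m^{\text{\rm can}}_{-1,\beta}$, a $T$-adically convergent sum of the tree contributions $\frak m(\Gamma)$ of (\ref{sumtreeformula}), each of which is assembled from the operations $\frak m_{k,\beta}$ and the homotopy $G$ by finitely many compositions and pairings. The only obstruction to rationality is that Section \ref{geomsec} uses continuous families of real-coefficient multisections and de Rham theory, so the first step is to redo Sections \ref{geomsec} and \ref{canonicalmodelsec} over $\Lambda_{0,nov}^{\Q}$ using the singular-chain model and ordinary multivalued perturbations of \cite{FOOO080}; then $\frak m_{-1,\beta}^{J}=\text{\rm Corr}(\mathcal M_0(\beta;J);(\text{\rm tri},\text{\rm tri}))(1)$ is a rational intersection number of Kuranishi spaces and the tree sum inherits rationality. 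This gives Conjecture \ref{conj1}.

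For the integral structure I would organise the count according to multiple covers, working with the geometric count $\frak m_{-1,\beta}^{J}\,\text{``=''}\,\#\mathcal M_0(\beta;J)$, which equals $\frak m^{\text{\rm can}}_{-1,\beta}$ by Theorem \ref{canmainth}. Stratify $\mathcal M_0(\beta;J)$ by the homology class of the underlying somewhere-injective disc: since $H^1(L;\R)=0$ such a disc is virtually rigid, so the contribution of its $d$-fold covers is an Euler-number computation on the moduli of degree-$d$ branched covers of a disc by a disc --- the open analogue of the Aspinwall--Morrison calculation --- which should evaluate to $1/d^2$, one power of $d$ fewer than the $1/d^3$ of the closed Calabi--Yau $3$-fold case, the missing power accounting for the boundary. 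Granting this multiple-cover formula one obtains, for each $\beta$, a relation
\begin{equation}
\frak m^{\text{\rm can}}_{-1,\beta} = \sum_{d\,:\,\beta/d\in H_2(M,L;\Z)} \frac{1}{d^{2}}\,\frak o^J_{\beta/d},
\end{equation}
and one \emph{defines} $\frak o^J_\beta$ by M\"obius inversion over the divisibility poset of $\beta$; Conjecture \ref{conj2} then becomes the assertion that these defined quantities are integers, which in turn reduces to the integrality of the primitive counts $\frak o^J_{\beta_0}$.

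The hard part is precisely the pair of ingredients just invoked: the $1/d^2$ multiple-cover formula and the integrality of the primitive disc counts. Neither is accessible in the generality of an arbitrary $(M,L)$ with present technology, for the same reason the parallel closed-string Gopakumar--Vafa integrality is hard: one cannot in general perturb $\mathcal M_0(\beta;J)$ to a space of honest discs, so ``the number of primitive $\beta_0$-discs'' must be extracted from a virtual count that is a priori only rational. A genuine proof therefore seems to require either an a priori transversal geometry in which the discs are regular and can literally be counted (for instance toric $M$, or an explicitly understood special Lagrangian), or a BPS-type reorganisation of the Kuranishi structures along the multiple-cover strata in the spirit of Ionel--Parker and Doan--Walpuski, or a reinterpretation of $\Psi^{\text{\rm can}}(0;J)$ as a Behrend-function-weighted count on a moduli space of objects in the Fukaya category, along the Donaldson--Thomas lines indicated in Section \ref{DTawx}. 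In full generality I would therefore leave this as a conjecture, expecting a proof in the favourable cases to follow from the multiple-cover formula together with a direct enumeration.
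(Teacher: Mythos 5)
The statement you are asked about is a \emph{conjecture} in the paper: no proof is given there, and none is claimed. The surrounding remarks only say that it is the open-string analogue of the Gopakumar--Vafa integrality conjecture for genus-zero Gromov--Witten invariants, that the factor $d^{-2}$ is the one discussed by Liu, and that even the weaker rationality statement (Conjecture \ref{conj1}) cannot be proved by the methods of the paper because the whole construction is carried out over $\R$, using de Rham theory and continuous families of multisections. Your proposal is consistent with this: you correctly identify the two genuinely hard ingredients (the $1/d^{2}$ multiple-cover formula and the integrality of primitive counts), you note that $\frak o^J_\beta$ would be \emph{defined} by M\"obius inversion so that the content of the conjecture is integrality, and you ultimately leave the statement open, which is exactly the status it has in the paper.

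One caveat on the part you present as routine: you propose to dispose of Conjecture \ref{conj1} by redoing Sections \ref{geomsec} and \ref{canonicalmodelsec} over $\Lambda_{0,nov}^{\Q}$ with the singular-chain model and ordinary multivalued perturbations of \cite{FOOO080}. This is precisely what the paper says it cannot do: the cyclic symmetry of the filtered $A_\infty$ structure, the compatibility with the forgetful maps $\mathfrak{forget}_{k,0}$, and hence the very definition of $\frak m_{-1,\beta}^{J}$ and of the superpotential, are obtained in \cite{F1} only through the continuous-family/de Rham machinery, which forces real coefficients; with singular chains and ordinary multisections the cyclic symmetry is not available, so the ``rational intersection number'' you invoke is not defined by the existing construction. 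So rationality should also be flagged as open (as the paper does), rather than as a first step that can be carried out; with that adjustment your assessment matches the paper's.
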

This is an anolog of the corresponding conjecture for 
Gromov-Witten invariant of genus zero. (See \cite{GV}.)
The factor $d^{-2}$ is discussed in \cite{Liu02}.
\subsection{Bulk deformation and generalization to
non Calabi-Yau case etc.}
\label{bulk}
In this paper we assumed $\dim_{\C} M = 3$, $c^1(M) = 0$, 
$\mu_L =0$. This assumption is used to 
define $\frak m_{-1,\beta}^J$.
Namely it is used to show that the 
(virtual) dimension of $\mathcal M_0(\beta;L;J)$ is $0$.
We may use bulk deformation 
(\cite{FOOO080} Section 3.8) to obtain a 
numerical invariant in some other cases, as follows.
\par
We consider the moduli space $\mathcal M_{\ell,k}(\beta;L;J)$ of 
bordered stable $J$-holomorphic curve of genus zero with $\ell$ interior 
marked points and $k$ boundary marked points, 
one boundary component and
of homology class $\beta$.
Let $\sigma_1,\ldots,\sigma_{\ell}$ be closed forms on $M$.
We may consider
$$
\text{\rm Corr}(\mathcal M_{\ell,0}(L;\beta;J);(ev^{\text{\rm  int}},\text{\rm tri}))
(\sigma_1,\ldots,\sigma_{\ell})
\in \Lambda^*(\text{\rm pt}) = \R
$$
if
$
* = n + \mu(\beta) - 3 + 2\ell - \sum \deg\sigma_i = 0. 
$
\par
We obtain similar numbers by considering $\mathcal M_{\ell,k}(L;\beta;J)$ 
and differential forms on $L$.
The algebraic structure behind this `invariant' is not 
yet clear to the author.  So the study of them is a problem for future research.
Another case where numerical invariant is defined 
is the case when $M$ is a toric manifold and $L$ is its $T^n$ orbit.
In that case $\mathcal M_{\ell,k}(L;\beta;J)$  of $\beta \in H_2(M,L;\Z)$ with Maslov index $\ge 2$ only 
is related to our structures.
See \cite{FOOO08I} and references therein for this case.
\subsection{The case of real point}
\label{real}
We assume $\dim_{\C} M = 3$ and let $\tau : M \to M$ be 
$J$-anti holomorphic involution. We assume that $L = \{ x \in M \mid \tau(x) = x\}$ is nonempty. 
Then it becomes a Lagrangian submanifold.
We assume $L$ is $\tau$-relatively spin 
(See \cite{FOOO06} Chapter 8 for its definition.
\cite{FOOO06} Chapter 8 will become \cite{FOOO09I}.) (If $L$ is spin then it is $\tau$-relatively spin.)
Then in \cite{FOOO06} Chapter 8 Sections 34 and 38, we constructed $\frak m^J_{k,\beta}$
such that
\begin{equation}\label{tausymmetry}
\frak m_{k,\tau_*(\beta)}^J(x_1,\ldots,x_k)
= (-1)^{k+1+*}\frak m_{k,\beta}^J
(x_k,\ldots,x_1)
\end{equation}
where
$* = \sum_{0\le i<j \le k} \deg'x_i\deg'x_j$.
(\cite{FOOO09I} Theorem 34.20.)
We can combine the construction of \cite{FOOO09I} with one in \cite{F1} 
and can define inhomogeneous cyclic filtered $A_{\infty}$
algebra $(\Lambda(L),\langle \cdot \rangle,\{\frak m_{k,\beta}^J\},\{\frak m_{-1,\beta}^J\})$
satisfying (\ref{tausymmetry}). Moreover $\frak m_{-1,\beta}^J$ satisfies
\begin{equation}
\frak m_{-1,\tau_*(\beta)}^J
= \frak m_{-1,\beta}^J.
\end{equation}
Then its superpotential satisfies
\begin{equation}
\Psi(-b;L;J) = \Psi(b;L;J).
\end{equation}
In particular $b=0$ is a critical point.
\begin{conj}
The critial value $\Psi(0;L;J)$ is equivalent to a particular case of the 
invariant by Solomon \cite{Solo}.
\end{conj}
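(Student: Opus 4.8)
The plan is to reduce the critical value $\Psi(0;L;J)$ to a signed count of $J$-holomorphic discs with boundary on $L = \text{\rm Fix}(\tau)$, and then to identify that count with the corresponding specialization of Solomon's invariant \cite{Solo}. First, by Theorem \ref{strPsi} we have $\Psi(b;J) = \sum_{\beta} T^{\beta\cap\omega}\frak m^J_{-1,\beta}\,y^{\partial\beta}$, and at $b = 0$ every monomial $y^{\partial\beta}$ (with $y_i = e^{x_i}$) equals $1$; equivalently, every summand of $\Psi'(b)$ carries a factor of $b$ in the last slot of the cyclic pairing and hence vanishes at $b=0$, so $\Psi'(0)=0$. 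Either way
$$\Psi(0;L;J) \;=\; \sum_{\beta} T^{\beta\cap\omega}\,\frak m^J_{-1,\beta} \;=\; \sum_{\beta} T^{\beta\cap\omega}\,\#\mathcal M_0(\beta;J),$$
the generating function of the virtual counts of stable $J$-holomorphic discs of genus zero, with one boundary component, no marked points, and boundary on $L$. Under the standing hypotheses $\dim_{\C}M = 3$, $c^1(M) = 0$, $\mu_L = 0$ one has $\dim\mathcal M_0(\beta;J) = 0$, so no interior or boundary insertions are needed to cut the dimension down; Solomon's open Gromov--Witten invariant, in precisely this case and with no constraints, is likewise defined as such a virtual disc count. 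Thus the statement is the equality of the two counts class by class, after the usual identification of $T^{\beta\cap\omega}$ with the area/Novikov weight of $\beta$.

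The comparison proper would proceed in two steps. (i) Match the moduli-theoretic input: both sides are built from the same space of stable discs, and under Assumption \ref{assumpJ} the sphere-bubble stratum of (\ref{genedecompdsi0}) is empty, so $\partial\mathcal M_0(\beta)$ is the union of disc-bubble fibre products divided by $\Z_2$ as in (\ref{partM0betadecomp}); it is exactly this $\Z_2$-quotient, together with the reflection symmetry of the moduli space, that makes $\frak m^J_{-1,\beta}$ a well-defined number, and this is the combinatorial shadow of the way Solomon uses the anti-symplectic involution to cancel the same codimension-one boundary. (ii) Match the signs: in the present construction the orientation of $\mathcal M_0(\beta;J)$ is determined by the $\tau$-relative spin structure on $L$ together with the Kuranishi structure of \cite{FOOO09I}, and the resulting symmetry $\frak m^J_{-1,\tau_*(\beta)} = \frak m^J_{-1,\beta}$ is recorded in (\ref{tausymmetry}); one must verify that this sign rule coincides with Solomon's, which is assembled from the (relative) Pin or spin data and the involution-induced reflection on $\mathcal M_0(\beta;J)$. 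A convenient route passes through the canonical model: by Theorem \ref{canmainth} and the remark following it, $\Psi(0;L;J) = \sum_{\beta} T^{E(\beta)}\frak m^{\text{\rm can}}_{-1,\beta}$ is a sum over ribbon trees, and one would match it term by term against an analogous tree expansion of Solomon's invariant, in the spirit of \cite{F1} Section 10.

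The main obstacle is that the two invariants a priori live in different virtual-cycle frameworks: Solomon perturbs the Cauchy--Riemann equation (or works with automatically regular data where that is available), whereas the construction here uses Kuranishi structures with continuous families of multisections compatible with the forgetful maps and with the boundary decompositions (\ref{bdcompati0}), (\ref{partM0betadecomp}). Showing that the two schemes produce the same rational number requires a comparison theorem for zero-dimensional disc moduli spaces in which the disc-bubbling boundary is matched on the nose --- more delicate than the analogous statements for closed Gromov--Witten invariants, because here the moduli spaces have genuine boundary and the numerical value depends on how that boundary is cancelled. A subsidiary point is normalization: whether Solomon's count runs over homology classes individually or over the orbits $\{\beta,\tau_*(\beta)\}$, and whether there is a Welschinger-type overall factor; once the geometric comparison is in hand this is bookkeeping. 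It is the absence of such a comparison at present that keeps the statement at the level of a conjecture.
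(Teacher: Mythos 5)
This statement is left as a \emph{conjecture} in the paper---no proof of it is given there---and your proposal does not close it either; it is a plan rather than a proof, as you yourself concede in the last paragraph. The reduction you start from is correct and consistent with the paper: every term of $\Psi'$ carries a factor of $b$, so $\Psi'(0)=0$, and by Theorem \ref{strPsi} (or directly from the definition of the corrected superpotential) $\Psi(0;L;J)=\sum_{\beta}T^{\beta\cap\omega}\,\frak m^J_{-1,\beta}$, the generating function of the virtual counts of unmarked stable discs. The items you then list---matching the cancellation of the disc-bubbling boundary strata, matching the orientations induced by the $\tau$-relative spin structure against Solomon's sign rules, and reconciling Solomon's perturbation scheme with the Kuranishi-structure/continuous-family-of-multisections framework used here---are exactly the content of the conjecture; naming them is not carrying them out, and no comparison theorem for the two virtual-chain constructions is supplied. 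So the genuine gap is the comparison itself, which is why the statement is (and remains, in the paper) a conjecture.

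Two further cautions about the shape of your plan. First, ``equality of the two counts class by class'' is probably not even well posed: the individual numbers $\frak m^J_{-1,\beta}$ depend on the choice of Kuranishi structure and multisections; only critical values of $\Psi$ are shown to be invariants (Theorem \ref{superpotential}.3 via Theorem \ref{pseudoisowelldef}), and in the real setting it is the $\tau$-symmetrized sum that is controlled. Any honest comparison must therefore be made at the level of the weighted sum, or after fixing compatible perturbations on both sides. Second, Solomon's invariant is independent of $J$, whereas $\Psi(0;L;J)$ is subject to the wall-crossing formula of Theorem \ref{wallcross}; the paper can only assert $\Psi(0;L;J_0)=\Psi(0;L;J_1)$ along families with $\tau_*J_t=-J_t$, where $n(\tilde\beta;L;\mathcal J)=-n(\tau_*\tilde\beta;L;\mathcal J)$ forces the wall-crossing terms to cancel. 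A proof of the conjectured equivalence must restrict to such families or otherwise account for this $J$-dependence; your outline does not address it.
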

We can prove $\Psi(0;L;J_0) = \Psi(0;L;J_1)$ if there exists a family of 
almost complex structures $J_t$ such that $\tau_*J_t = - J_t$.
In fact we can show
\begin{equation}
n(\tilde{\beta};L;\mathcal J) = - n(\tau_*\tilde{\beta};L;\mathcal J)
\end{equation}
for such $\mathcal J = \{J_t\}$.
\par
If we can generalize this construction in a way suggested in Subsection 
\ref{bulk}, it seems likely that we can reproduce the invariants of 
Solomon and Welschinger \cite{Wel}. 
\par
The superpotential we defined in this paper is also likely to be related to the  numbers studied by 
Walcher \cite{Wal}.
(For such a purpose we need to include flat bundle on $L$.
In fact in \cite{Wal} it seems that several flat connections are used to cancel the 
wall crossing term which appears in (\ref{wallcrossformula}).)
\subsection{Generalization to higher genus and Chern-Simons  perturbation theory}
\label{cs}
The right hand side of the formula (\ref{sumtreeformula}) has obvious similarity with the 
invariant of Chern-Simons perturbation theory (\cite{AxSi91II}).
It seems very likely that we can combine two stories to obtain 
an invariant counting the number of stable maps from bordered Riemann surface 
with arbitrary many boundary components and of arbitrary genus.
Its rigorous definition is not know at the time of writing of this paper.
The author is unable to do it at the time of writing of this paper because 
of the transversality problem. Here we describe some ideas and 
explain the difficulty to make it rigorous.
\par
Let $T$ be a ribbon graph. Namely it is a graph together with a choice of 
cyclic order of the sets of edges containing each vertices.
It uniquely determines a compact oriented 2 dimensional manifold $\Sigma(T)$ 
without boundary and an embedding $i : T \to \Sigma(T)$
such that the cyclic order of the edges are induced by the orientation 
of $\Sigma(T)$ and that the connected component of the complement 
$\Sigma(T) \setminus T$ are all discs.
(We do not assume that $T$ or $\Sigma(T)$ is connected.)
\par
Let $C_0(T)$ be the set of vertices and let $\ell = \#C_0(T)$.
For $v_i \in C_0(T)$, let $k_i$ be the number of edges containing $v_i$.
Let $e_{i,1},\ldots,e_{i,k_i}$ be the set of such edges.
The set of the pair $(v_i,e_{i,j})$ where $i=1,\ldots,\ell$, $j=1,\ldots,k_i$ 
is called a flag. Let $\text{\rm Fl}(T)$ be the set of flags. 
\par
We next consider a compact oriented 2 dimensional manifold $\Sigma$ 
with boundary $\partial \Sigma$. We assume $\partial \Sigma$ has 
at least $\ell$ connected components $\partial_i \Sigma$, $i=1,\ldots,\ell$ 
and on $\partial_i \Sigma$ we put $k_i$ boundary marked points.
There may be other component of $\partial \Sigma$, on which we do not
put boundary marked points.
(We remark that we do not assume that $\Sigma$ is connected.)
Each of the boundary marked points thus corresponds to an element 
of $\text{\rm Fl}(T)$.
\par
Let $\beta \in H_2(M,L;\Z)$ where $M$ is a 6 dimensional symplectic manifold 
with $c^1(M) = 0$ and $L$ its Lagrangian submanifold such that 
$H^1(L;\Q) =0$.
We consider the pair $(j,v)$ where $j$ is a complex structure on $\Sigma$ and 
$v : (\Sigma,\partial\Sigma) \to (M,L)$ is a $j-J$ holomorphic map.
Let $\mathcal M(\Sigma;\beta;L;J)$ be the moduli space of such pair.
(We take stable map compactification. It has a Kuranishi structure of 
dimension $\#\text{\rm Fl}(T)$.)
Evaluation map at each boundary marked points gives
\begin{equation}
ev : \mathcal M(\Sigma;\beta;L;J) \to L^{\#\text{\rm Fl}(T)}.
\end{equation}
We next consider the operator $G : \Lambda(L) \to \Lambda(L)$ of degree 
$+1$ as in Lemma 10.1 \cite{F1}. 
We can associate a distributional form $\tilde G$ on $L\times L$ or degree $2$
such that
$$
\langle G(u),v\rangle = \int \tilde G \wedge (u\times v).
$$
(See \cite{AxSi91II}.)
For each edge $e$ of $T$ we have
$\pi_e : L^{\#\text{\rm Fl}(T)} \to L^2$, that is the projection 
to the factors corresponding to $(v,e)$, $(v',e)$ where $\partial e = \{v,v'\}$.
We now `define'
\begin{equation}\label{nondef}
\frak m(T;\Sigma;\beta;L;J) =
\int_{\mathcal M(\Sigma;\beta;L;J)} ev^*
\left(
\prod_{e \in C_1(T)} \pi_e^*(\tilde G)
\right).
\end{equation}
To define the right hand side of (\ref{nondef}) rigorously, 
we need to take an appropriate perturbation of our 
moduli space $\mathcal M(\Sigma;\beta;L;J)$ and 
use it to define its virtual fundamental chain.
\par
The case when the genus of $\Sigma$ is $0$, 
$\Sigma$ has only one boundary component, and 
$T$ is a tree, is worked out in this paper and \cite{F1}.
In that case, it is important to find a perturbation so that it is 
compatible with the process to forget boundary marked points.
As we remarked in \cite{F1} Remark 3.2, the way we constructed such a 
continuous 
family of multisections in this paper and in  \cite{F1} uses the fact that 
the genus of $\Sigma$ is $0$. So it cannot be directly generalized to 
higher genus case.
\par
If we can find appropriate way to rigorously define 
(\ref{nondef}), we then put
\begin{equation}
\Psi(S,T;L;J)
= \sum_{T,\Sigma} S^{2-\chi(\Sigma \# \Sigma(T))} T^{\beta\cap [\omega]}
\frak m(T;\Sigma;\beta;L;J).
\end{equation}
This is 
expected to become an invariant of  $M,L,J$.
\par
Here $\Sigma \# \Sigma(T)$ is defined as follows.
For each $v \in C_0(T)$ we remove a small ball $B(v)$ centered at $v$
from $\Sigma(T)$. 
We then glue $\partial B(v_i)$ with the $i$-th boundary component 
of $\Sigma$. We thus obtain $\Sigma \# \Sigma(T)$ which is a compact 
oriented 2 dimensional manifold with or without 
boundary. $\chi(\Sigma \# \Sigma(T))$ is its Euler number.
We take the sum for $T,\Sigma$ such that $\Sigma \# \Sigma(T)$ is connected.
(Here the sum is over topological type of $\Sigma$ and $T$.
We actually need to divide each term by the order of appropriate 
automorphism group in a way similar to (\ref{sumtreeformula}).)
$S$ is a formal parameter which is called string coupling constant in physics
literature.
\begin{prob}
Let $M,L,J$ be a triple of symplectic manifold $M$, its Lagrangian submanifold $L$, and 
its tame almost complex structure $J$, such that $\dim L=3$, $c^1(M) = 0 = \mu_L$, 
$H^1(L;\Q) =0$.
Define an invariant $\Psi(S,T;L;J)$ such that 
at $T=0$ it becomes perturbative Chern-Simons invariant 
and at $S=0$ it becomes the invariant of 
Corollary \ref{maincor}.
\end{prob}
\begin{rem}
The study of Chern-Simons perturbation theory suggests 
that we need to fix framing of $L$ in order to obtain 
an appropriate perturbation.
\end{rem}
When we generalize the story to the case $H^1(L;\Q) \ne 0$, 
we need to consider the case when $T$ has exterior vertices 
and $\Sigma$ has a boundary marked point on the 
component other than $k_i$ components $\partial_i\Sigma$.
In that case we expect to obtain certain algebraic structure 
on $H^1(L;\Lambda_0)$. 
We believe that involutive-bi-Lie infinity structure 
(\cite{CFL}) is appropriate for this purpose.
More precisely this is the case when 
at least one element of $H^1(L;\Q)$ is assigned 
to each of the connected component of the boundary.
(In genus $0$ it corresponds to $\frak m_{k,\beta}$ with $k\ge 0$.)
If we restrict to such cases, the wall crossing phenomenon (the $J$ dependence) 
does not seem to occur. Namely the algebraic structure is expected to be independent 
of $J$ up to homotopy equivalence.
(This is certainly the case of genus zero as is proved in \cite{F1}.)
\subsection{Mirror to Donaldson-Thomas invariant}
\label{DT}
Let $M$ be a symplectic manifold of dimension $6$ and $c^1(M) =0$. 
We consider the set $\widetilde{\frak{Lag}}(M)$ of paris $(L,[b])$ such that
$L$ is a relatively spin Lagrangian submanifold with $\mu_L = 0$ and 
$[b] \in \mathcal M(L;\Lambda_0)$. 
\par
We say $(L,[b]) \sim (L',[b'])$ if there exists a Hamiltonian 
diffeomorphism $F : M \to M$ such that $L' = F(L)$ and 
$F_*(b)$ is Gauge equivalent to $b'$.
Let ${\frak{Lag}}(M)$ be the quotient space.
The quotient topology on ${\frak{Lag}}(M)$ is rather pathological. 
Namely it is likely to be non-Hausdorff in general.
We also need to take appropriate compactification 
of this moduli space by including singular Lagrangian submanifolds, 
for example.
(Such a compactification is not known at the time of writing this 
paper.)
\par
On the other hand, we can define a `local chart' of 
${\frak{Lag}}(M)$ as follows. Let $(L,[b_0]) \in {\frak{Lag}}(M)$.
We take $\delta > 0$ small such that for $L(c)$ with 
$c = \sum c_i\text{\bf e}_i$, $\vert c_i\vert < \delta$, there exists $F_c$ as in (\ref{73}),(\ref{74}).
We consider 
$$
A(\delta) = \{(y_1,\ldots,y_{b_1}) \mid y_i \in \Lambda, \vert v(y_i)\vert < 
\delta\}.
$$
Then a neighborhood of $(L,[b_0])$ is identified with the set of $(y_1\ldots,y_m) \in A(\delta)$ 
satisfying the Maurer-Cartan equation
\begin{equation}\label{MCeqs}
\sum_{k=0}^{\infty}\sum_{\beta} \frak m_{k,\beta}(y_1,\ldots,y_m) =0.
\end{equation}
We remark the equation (\ref{MCeqs}) is well defined by Theorem \ref{convmain}.
\par
$y_i = e^{x_i} = T^{c_i}y'_i$ with 
$c_i = v(y_i)$ then $b' = \sum \log y'_i \text{\bf e}_i$ and $L(c)$ defines 
an element of $\frak{Lag}(M;(F_c)*J)$. 
(See Section \ref{conv} and \cite{F1} Section 13.)
Using the independence of 
Maurer-Cartan scheme of almost complex structure, we obtain an 
element of  $\frak{Lag}(M) = \frak{Lag}(M;J)$.
Thus, one may regard ${\frak{Lag}}(M)$ as a kind of  `non-separable rigid analytic stack'.
\par
We remark that the equation (\ref{MCeqs}) is equivalent to
$$
\nabla_y \Psi = 0.
$$
\par
Thus our situation is similar to one which appears in Donaldson-Thomas 
invariant. (Thomas \cite{Thomas}, Joyce \cite{Joy}, Kontsevich-Soibelman \cite{KS}.)
There the role of superpotential is taken by the holomorphic Chern-Simons
invariant.
\begin{prob}
\begin{enumerate}
\item Find an appropriate stability condition for the pair $(L,[b])$
and use it to construct a moduli space ${\frak{Lag}}^{\text{\rm st}}(M)$ 
of stable pairs $(L,[b])$ which 
has better properties than ${\frak{Lag}}(M)$.
\item Define an invariant which is the `order' of ${\frak{Lag}}^{\text{\rm st}}(M)$ 
in the sense of virtual fundamental cycle.
\item Prove that it coincides with Donaldson-Thomas invariant of the 
Mirror manifold of $M$.
\end{enumerate}
\end{prob}
It seems to the author that this problem is very difficult to study at this stage.
\par\medskip
\begin{rem}
After \cite{F1} had been put on an arXiv, and at the time of 
final stage of writing this article, 
a paper \cite{vitto} was put on an arXiv, where a 
different construction of a similar invariant as one in Corollary \ref{maincor}
(over $\Q$)
is sketched.
\end{rem}
{\scshape 
\begin{flushright}
\begin{tabular}{l}
Kyoto University \\
Kitashirakawa, Sakyo-ku, \\
Kyoto 602-8502, \\
Japan \\
{\upshape e-mail: fukaya@math.kyoto-u.ac.jp}\\
\end{tabular}
\end{flushright}
}


\begin{thebibliography}{99}

\bibitem{AxSi91II}
S. Axelrod and I. Singer,
{\em Chern-Simons perturbation theory II},
J. Differential Geom.
\textbf{39}
(1991)
173--213.

\bibitem{BGR84}
S. Bosch, U. G\"untzer and R. Remmert,
{\em Non-Archimedean Analysis. A systematic approach to
rigid analytic geometry}.
Grundlehren der Mathematischen Wissenschaften, \textbf{261}.
Springer-Verlag,
Berlin,
1984.

\bibitem{Cho}
C.-H. Cho,
{\em Strong homotopy inner product of an 
$A_{\infty}$ algebra},
Int. Math. Res. Not. \textbf{13}, (2008). 

\bibitem{CFL}
K. Cieliebak, K. Fukaya and J. Latschev,
{\em Homological algebra related to Riemann surfaces with boundary},
in preparation.

\bibitem{F1}
K. Fukaya,
{\em Cyclic symmetry and adic convergence 
in Lagrangian 
Floer theory},
Preprint, arXiv:0907.4219.

 \bibitem{FOOO06}
 K. Fukaya, Y.-G. Oh, H. Ohta and K. Ono,
{\em   Lagrangian intersection Floer theory -- Anomaly and obstructions }--
2006/7 version,  Preprint. 

 \bibitem{FOOO080}
 K. Fukaya, Y.-G. Oh, H. Ohta and K. Ono,
{\em   Lagrangian intersection Floer theory -- Anomaly and obstructions},
to appear in AMS/IP studies in advanced Math.


 \bibitem{FOOO09I}
K. Fukaya, Y.-G. Oh, H. Ohta and K. Ono,
{\em Floer cohomology over the integer and anti-symplectic involutions},
in preparation.


\bibitem{FOOO08I}
 K. Fukaya, Y.-G. Oh, H. Ohta and K. Ono,
{\em   Lagrangian Floer theory on compact toric manifolds I},
to appear in Duke. Math. J.,
arXiv:0802.1703.

\bibitem{FO}
K. Fukaya and K. Ono,
{\em Arnold conjecture and Gromov-Witten invariant},
Topology 
\textbf{38}, (1999), no. 5, 933--1048.

\bibitem{GV}
R. Gopakumar and C. Vafa,
{\em M-Theory and Topological Strings-I,II},
hep-th/9809187, hep-th/9812127.

\bibitem{vitto} 
V.  Iacovino
{\em Open Gromov-Witten theory on Calabi-Yau three-folds I},
Preprint, arXiv:0907.5225.

\bibitem{Joy}
D. Joyce
{\em Holomorphic generating functions for invariants counting coherent sheaves on Calabi-Yau 3-folds}, 
Geom. Topol. \textbf{11} (2007), 667--725.

\bibitem{Ka}
H. Kajiura, 
{\em   Noncommutative homotopy algebra associated with open strings},
Rev. Math. Phys. \textbf{19} (2007), 1--99.

\bibitem{KS}
M. Kontsevich and Y. Soibelman
{\em Stability structures, motivic Donaldson-Thomas invariants and cluster transformations},
Preprint, arXiv:0811.2435.

\bibitem{Liu02}
M.C. Liu,
{\em   Moduli of J-holomorphic curves with Lagrangian boundary conditions and
open Gromov-Witten invariants for an $S^1$-equivariant pair},
Preprint, math.SG/0210257
(2002).

\bibitem{Laza01}
C. I. Lazaroiu,
{\em  String field theory and brane superpotentials},
J. High Energy Phys.
\textbf{10}
(2001),
829--862.

\bibitem{Solo}
J . Solomon,
{\em  Intersection theory on the moduli space of holomorphic curves with Lagrangian boundary conditions}, 
Preprint, math.SG/0606429. 

\bibitem{Thomas}
R.P Thomas, 
{\em A holomorphic Casson invariant for Calabi-Yau 3-folds, and bundles on $K3$ fibrations}, 
J. Differential Geom. \textbf{54} (2000), 367--438.

\bibitem{Tom01}
A. Tomasiello,
{\em    A-infinity structure and superpotentials},
J. High Energy Phys.
\textbf{9} (2001),
30.

\bibitem{Wal}
J. Walcher
{\em Opening Mirror Symmetry on the Quintic},
Commun. Math. Phys. \text{\bf 276} (2007),
671-689.

\bibitem{Wel} 
J.Y. Welschinger, 
{\em Invariants of real symplectic 4-manifolds and lower 
bounds in real enumerative geometry}, 
Invent. Math., \textbf{162} (2005), 195 - 234.
\end{thebibliography}
\end{document}